\documentclass[12pt,reqno]{article}

\usepackage[usenames]{color}
\usepackage{amssymb}
\usepackage{amsmath}
\usepackage{amsthm}
\usepackage{amsfonts}
\usepackage{amscd}
\usepackage{graphicx}

\usepackage[colorlinks=true,
linkcolor=webgreen,
filecolor=webbrown,
citecolor=webgreen]{hyperref}

\definecolor{webgreen}{rgb}{0,.5,0}
\definecolor{webbrown}{rgb}{.6,0,0}

\usepackage{color}
\usepackage{fullpage}
\usepackage{float}
\usepackage{mathrsfs}

\usepackage{graphics}
\usepackage{latexsym}
\usepackage{epsf}

\setlength{\textwidth}{6.5in}
\setlength{\oddsidemargin}{.1in}
\setlength{\evensidemargin}{.1in}
\setlength{\topmargin}{-.1in}
\setlength{\textheight}{8.4in}

\newtheorem{ex}{Example}[subsection]
\newtheorem{Theorem}{Theorem}[subsection]
\newtheorem{Lemma}{Lemma}[subsection]
\newtheorem{prop}{Proposition}[subsection]
\newtheorem{remark}{Remark}[subsection]
\newtheorem{coro}{Corollary}[subsection]

\newcommand{\seqnum}[1]{\href{https://oeis.org/#1}{\rm \underline{#1}}}

\begin{document}
		\begin{center}
		\vskip 1cm{\LARGE\bf 
			Explicit Identities and new results for Infinite Series associated 	with the Ratio of Central 
			\vskip .05in
			Binomial Coefficients 
			}
		\vskip 1cm
		\large
	Yao Mawugna Dzokotoe \\ Ecole Nationale Sup\'erieure de Statistique et d' Economie Appliqu\'ee,\\ Abidjan, C\^ote d'Ivoire. \\  \href{mailto:mawugna.dzokotoe@gmail.com}{\tt mawugna.dzokotoe@gmail.com} 
	\vspace{3mm}\\	
	Segun Olofin Akerele
	\\ Department of Mathematics, University of Ibadan, Oyo State, Nigeria.\\
	\href{mailto:akereleolofin@gmail.com}{\tt akereleolofin@gmail.com}
	\end{center}
	
	\vskip .2 in
	
\begin{abstract}
	We investigate some classes of infinite series involving central binomial coefficients, particularly focusing on those arising from ratios such as $\binom{2n}{n}\binom{4n}{2n}^{-1}$,$\binom{4n}{2n}\binom{2n}{n}^{-1}$  and related expressions. We derive several new explicit identities and closed-form evaluations, building on and refining previous results by Bhandari (2022) and Adegoke et al.  (2022).
\end{abstract}
	
	\section{Introduction}
Infinite series involving central binomial coefficients have long captured the interest of mathematicians due to their rich combinatorial structure and deep connections to number theory, special functions, and analysis. The central binomial coefficient, given by
\begin{equation}
	\binom{2n}{n}=\frac{(2n)!}{(n!)^2}.
\end{equation}
for $n\geq 0$, appears prominently in numerous areas, including combinatorics, number theory and mathematical analysis, you can see \cite{gould,9,3,10,13}. They are considered as special sequences and are indexed as sequence \seqnum{A000984} in the On-Line Encyclopedia of Integer Sequences \cite{oeis} . Their generating function is given by: 
$$\sum_{n=0}^{\infty}\binom{2n}{n}x^n=\frac{1}{\sqrt{1-4x}}, \qquad \text{provided} \ |x|<\frac{1}{4}.$$
\par 	The work of Lehmer \cite{9} is an important reference in the subject of infinite series of central binomial coefficients. The current literature is extensive where series involving these numbers and other special sequences such as Harmonic numbers, Fibonacci numbers are studied. For example Boyadzhiev \cite[p.\ 2]{Boyadzhiev} obtained the following beautiful results
\begin{align*}
	&\sum_{n=0}^{\infty}\binom{2n}{n}\frac{(-1)^{n-1}}{4^n}H_n=\sqrt{2}\log\frac{2\sqrt{2}}{1+\sqrt{2}}.\\
	&\sum_{n=0}^{\infty}\binom{2n}{n}\frac{H_n}{8^n}=2\sqrt{2}\log\frac{1+\sqrt{2}}{2}.
\end{align*}
$H_n$ stands for the harmonic numbers defined by $H_n=\sum_{k=1}^n\frac{1}{k}$.\\
After Boyadzhiev, Chen \cite{Chen} obtained the following identities
\begin{align*}
	&\sum_{n=1}^{\infty}\binom{2n}{n}\frac{h_nF_n}{8^n}=\frac{1}{\sqrt{10}}\ln 2+\frac{1}{\sqrt{2}}\ln\left(\frac{3+\sqrt{5}}{2}\right).\\
	&\sum_{n=1}^{\infty}\binom{2n}{n}\frac{n^2h_n}{8^n}=\frac{3}{2}\sqrt{2}+\frac{5\sqrt{2}}{3}\ln 2.
\end{align*}	
where,
$$h_n=1+\frac{1}{3}+\frac{1}{5}+\cdots+\frac{1}{2n-1} $$
and $F_n$ denotes the $n-th$ Fibonacci number satisfying the recurrence relation $F_n=F_{n-1}+F_{n-2}$, $n\geq 2$ with conditions $F_0=0, F_1=1$.

Our motivation to write this paper started with a 2022 paper \cite{1} by Bhandari. The author used creative techniques based on building ordinary generating functions of central binomial coefficients and making use of Wallis' integral formulas to derive interesting series for $\binom{2n}{n} \binom{4n}{2n}$ and $\binom{2n}{n}\binom{4n}{2n}^{-1}$. As examples he obtained for $\binom{2n}{n}\binom{4n}{2n}$:
$$\sum_{n=0}^{\infty}\frac{1}{(n+1)64^n}\binom{2n}{n}\binom{4n}{2n}=\frac{8\sqrt{2}}{3\pi}.$$
and for $\binom{2n}{n}\binom{4n}{2n}^{-1}$ he obtained among others, these two beautiful identities:
\vspace{-3mm}
\begin{align*}
	&\sum_{n=0}^{\infty}\frac{4^n}{(2n-1)^2}\frac{\binom{2n}{n}}{\binom{4n}{2n}}=4(\sqrt{2}-1).\\
	&\sum_{n=1}^{\infty}\frac{n4^n}{(2n-1)^2(4n+1)}\frac{\binom{2n}{n}}{\binom{4n}{2n}}=\frac{5\sqrt{2}-4}{9}.
\end{align*}
\par Main theorems in \cite{1} involving $\binom{2n}{n}\binom{4n}{2n}^{-1}$ are expressed in terms of finite binomial sums of two families of integrals, namely:
$$\mathcal{B}(k)=\int_0^{1/2}\frac{t^k}{\sqrt{1-t}}\,\mathrm{d}t,	\quad \text{and} \quad \varphi(2k+1)=\int_0^1 t^{2k+1}\sqrt{1+t^2}\,\mathrm{d}t$$
for $k\in \mathbb{Z}_{\geq 0}$.\\
The determination of general closed form formulas for the integrals were left as open problems. The attempt to answer this question is the starting point of this work and we obtain:
\begin{align*}
	&\mathcal{B}(k)=\sum_{p=0}^k\binom{k}{p}\frac{(-1)^p}{2p+1}\left(2-\frac{\sqrt{2}}{2^p}\right).\\
	&\varphi(2k+1)=\sum_{p=0}^k\binom{k}{p}(-1)^{k-p}\frac{\sqrt{2}\cdot 2^{p+1}-1}{2p+3}.
\end{align*}
Recently, we came across the paper of Adegoke et al.\ \cite{2} which became the second source of motivation for writing this paper. In Section 7 of \cite{2} the authors gave two theorems on series involving $\binom{4n}{2n}\binom{2n}{n}^{-1}$. Once again these two theorems are left as far of two integrals, $\int_0^{\frac{\pi}{2}}\sin^{2r}x\sin\frac{x}{2}\,\mathrm{d}x$ and $\int_0^{\frac{\pi}{2}}\sin^{2r-1}x\cos\frac{x}{2}\,\mathrm{d}x$. We investigated their closed forms and we obtained that:
\begin{align*}
	&\int_0^{\frac{\pi}{2}}\sin^{2r}x\sin\frac{x}{2}\,\mathrm{d}x=\frac{16^r}{(4r+1)\binom{4r}{2r}}\left(2+\sqrt{2}\sum_{k=0}^r \frac{\binom{4k}{2k}}{(4k-1)16^k}\right).\\
	&\int_0^{\frac{\pi}{2}}\sin^{2r-1}x\cos\frac{x}{2}\,\mathrm{d}x=\frac{6\cdot 16^{r-1}}{r\binom{4r}{2r}}\left(\frac{4}{3}-\sqrt{2}\sum_{k=0}^{r-1} \frac{\binom{4k}{2k}}{(6k+3)16^k}\right).
\end{align*}
It's worth noting that infinite series involving ratio of central binomial coefficients are rare in the existing literature and it would be handy to fill this gap. This work contributes to this endeavor by reviewing explicitly, theorems on $\binom{2n}{n}\binom{4n}{2n}^{-1}$ from \cite{1} and $\binom{4n}{2n}\binom{2n}{n}^{-1}$ from \cite{2} based on the aforementioned integrals evaluations. \\
Besides this we derive new interesting theorems from which we choose the following ones as showcase:
\begin{align*}
	&\sum_{n=1}^{\infty}\frac{1}{4^nn(2n+3)}\frac{\binom{4n-2}{2n-1}}{\binom{2n+2}{n+1}}=\frac{27\sqrt{2}-26}{420}.\\
	&\sum_{n=1}^{\infty}\frac{n^2}{(2n+3)(2n-1)^2(2n+1)}\frac{\binom{2n}{n}}{\binom{2n+2}{n+1}}=\frac{16+5\pi^2}{1024}.\\
	&\sum_{n=1}^{\infty}\frac{1}{(2n+5)(n+1)(2n+1)(2n+3)}\frac{\binom{2n}{n}}{\binom{2n+4}{n+2}}=\frac{5\pi^2}{1024}-\frac{137}{2880}.
\end{align*}
Throughout this paper, we verify our results using Computer Algebra System (CAS) software Mathematica 13.3.

\section{Preliminaries}\label{prelim}
In this section we start with some interesting generating functions from \cite{7,8} for $\binom{2n}{n}$ and $\binom{2n}{n}^{-1}$ along with their proof that will serve to derive new results in section 5.
\begin{Lemma}\label{lemma2.0.1}	
	For $|x|\leq 1$,
	\begin{equation}
		\label{1}
		\sum_{n=1}^{\infty}\frac{nx^{2n+3}}{4^n(2n-1)^2(2n+1)(2n+3)}\binom{2n}{n}=\frac{(8x^4-8x^2+3)\sin^{-1}x+\sqrt{1-x^2}(6x^3-3x)}{128}.
	\end{equation}
	\begin{proof}
		It is easy to show that,
		\begin{equation}
			\label{buhari}
			\sum_{n=1}^{\infty}\frac{nx^{2n}}{4^n(2n-1)^2(2n+1)}\binom{2n}{n}=\frac{1}{8}\left(\sqrt{1-x^2}+2x\sin^{-1}x-\frac{\sin^{-1}x}{x}\right).
		\end{equation}
		Next we multiply (\ref{buhari}) by $x^2$ and integrate both sides with respect to $x$ to get the result.
	\end{proof}	
\end{Lemma}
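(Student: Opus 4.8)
The plan is to follow the natural route of integrating the auxiliary generating function (\ref{buhari}), so the argument breaks into three parts: establishing (\ref{buhari}), justifying a term-by-term integration, and evaluating the elementary integral that results. For (\ref{buhari}) itself I would begin from the two standard expansions $(1-x^2)^{-1/2}=\sum_{n\ge 0}\binom{2n}{n}x^{2n}/4^n$ and $\sin^{-1}x=\sum_{n\ge 0}\binom{2n}{n}x^{2n+1}/\big(4^n(2n+1)\big)$, valid on $[-1,1]$, split the coefficient via the partial fraction $\frac{n}{(2n-1)^2(2n+1)}=\frac{1}{8(2n-1)}+\frac{1}{4(2n-1)^2}-\frac{1}{8(2n+1)}$, and identify each resulting sum with a termwise integral of those expansions; assembling the closed forms gives (\ref{buhari}). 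I will then take (\ref{buhari}) as the starting point for the main argument.

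Multiplying (\ref{buhari}) by $x^2$ turns its left-hand side into the power series $\sum_{n\ge 1}\frac{n}{4^n(2n-1)^2(2n+1)}\binom{2n}{n}x^{2n+2}$, whose coefficients are $O(n^{-5/2})$ by Stirling's approximation $\binom{2n}{n}/4^n\sim 1/\sqrt{\pi n}$; hence it converges absolutely and uniformly on $[-1,1]$, and integrating it term by term from $0$ to $x$ is legitimate for every $x\in[-1,1]$. Since $\int_0^x t^{2n+2}\,dt=x^{2n+3}/(2n+3)$, this integration reproduces exactly the target series on the left of the claimed identity. The problem is thereby reduced to computing $I(x)=\frac{1}{8}\int_0^x\big(t^2\sqrt{1-t^2}+2t^3\sin^{-1}t-t\sin^{-1}t\big)\,dt$ and matching it to the asserted right-hand side.

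For $I(x)$ the pure-algebraic piece $\int_0^x t^2\sqrt{1-t^2}\,dt$ is handled by the substitution $t=\sin\theta$ (or the standard reduction formula), and the two arcsine integrals are integrated by parts with $u=\sin^{-1}t$, $du=dt/\sqrt{1-t^2}$, each leaving a rational-over-$\sqrt{1-t^2}$ remainder that integrates to a polynomial multiple of $\sqrt{1-t^2}$ together with a residual $\sin^{-1}t$ term. Collecting the $\sin^{-1}x$ contributions and the $\sqrt{1-x^2}$ contributions separately and fixing the constant of integration by $I(0)=0$ (which matches the series, whose value at $x=0$ is $0$) should deliver the stated closed form. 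I expect the one real obstacle to be this final consolidation: several arcsine and square-root terms with awkward coefficients must cancel down to precisely $(8x^4-8x^2+3)\sin^{-1}x$ and $(6x^3-3x)\sqrt{1-x^2}$. A reliable check, and an alternative to forward integration, is to differentiate the proposed right-hand side; the $(1-x^2)^{-1/2}$ terms then collapse through $8x^4-8x^2+3-x(6x^3-3x)=2x^4-5x^2+3=(3-2x^2)(1-x^2)$, and after simplification one recovers $\frac{x^2}{8}\big(\sqrt{1-x^2}+2x\sin^{-1}x-\sin^{-1}x/x\big)$, confirming that the proposed expression is the correct antiderivative normalized at $x=0$.
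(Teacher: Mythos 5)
Your proposal is correct and follows exactly the paper's route: establish the auxiliary identity \eqref{buhari}, multiply by $x^2$, and integrate term by term from $0$ (the paper merely asserts \eqref{buhari} as "easy to show" and omits the convergence justification and the final integral evaluation, all of which you supply correctly — in particular your partial fraction $\frac{n}{(2n-1)^2(2n+1)}=\frac{1}{8(2n-1)}+\frac{1}{4(2n-1)^2}-\frac{1}{8(2n+1)}$ and the factorization $2x^4-5x^2+3=(3-2x^2)(1-x^2)$ both check out). No gaps; your version is simply a fully detailed rendering of the same argument.
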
	
\begin{Lemma}\label{lemma2.0.2}
	For $|x|\leq 1$ and $x\neq 0$,
	\begin{equation}
		\label{2}
		\sum_{n=1}^{\infty}\frac{2n^2x^{2n+1}}{4^n(2n-1)^2(2n+1)}\binom{2n}{n}=\frac{(2x^2+1)\sin^{-1}x-x\sqrt{1-x^2}}{8}.
	\end{equation}
\end{Lemma}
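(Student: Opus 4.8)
The plan is to derive (\ref{2}) from the auxiliary series (\ref{buhari}) by a single differentiation, exactly dual to the multiply-then-integrate step used for Lemma \ref{lemma2.0.1}. Write
$$g(x)=\sum_{n=1}^{\infty}\frac{nx^{2n}}{4^n(2n-1)^2(2n+1)}\binom{2n}{n}$$
for the left-hand side of (\ref{buhari}). Differentiating the general term in $x$ pulls down a factor $2n$, sending $nx^{2n}$ to $2n^2x^{2n-1}$, so that $x^2g'(x)$ is precisely the left-hand side of (\ref{2}). The coefficients of $g$ are $\frac{n}{4^n(2n-1)^2(2n+1)}\binom{2n}{n}=O(n^{-5/2})$ by Stirling, so the series has radius of convergence $1$ and may be differentiated term by term on $|x|<1$; the differentiated coefficients are $O(n^{-3/2})$, hence both sides of (\ref{2}) converge absolutely and continuously on $|x|\le 1$, and the identity extends to the closed disk by continuity. (Both sides vanish at $x=0$, so the excluded value $x\ne 0$ is only an artifact of the $\sin^{-1}x/x$ term in (\ref{buhari}).)

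With this reduction in hand, it remains to differentiate the closed form on the right of (\ref{buhari}) and multiply by $x^2$. Differentiating term by term, applying the quotient rule to $\sin^{-1}x/x$, gives
$$g'(x)=\frac{1}{8}\left(\frac{-x}{\sqrt{1-x^2}}+2\sin^{-1}x+\frac{2x}{\sqrt{1-x^2}}-\frac{1}{x\sqrt{1-x^2}}+\frac{\sin^{-1}x}{x^2}\right).$$
Multiplying through by $x^2$ collects the inverse-sine contributions as $2x^2\sin^{-1}x+\sin^{-1}x=(2x^2+1)\sin^{-1}x$, and the algebraic contributions as $\frac{-x^3+2x^3-x}{\sqrt{1-x^2}}=\frac{x^3-x}{\sqrt{1-x^2}}$.

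The one genuine simplification is $\frac{x^3-x}{\sqrt{1-x^2}}=-x\sqrt{1-x^2}$, obtained by factoring $x^3-x=x(x^2-1)=-x(1-x^2)$ and cancelling one power of $\sqrt{1-x^2}$. This yields
$$x^2g'(x)=\frac{(2x^2+1)\sin^{-1}x-x\sqrt{1-x^2}}{8},$$
which is exactly (\ref{2}). I expect no serious obstacle: the computation is elementary calculus, and the only points deserving a word of care are the justification of differentiating under the summation sign (covered by the $O(n^{-3/2})$ bound above) and the bookkeeping in merging the several $x/\sqrt{1-x^2}$ terms after multiplication by $x^2$.
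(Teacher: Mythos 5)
Your proposal is correct and follows essentially the same route as the paper, which simply states that the result follows by differentiating \eqref{buhari}; you additionally make explicit the necessary multiplication by $x^2$ after differentiation (which the paper's one-line proof glosses over) and justify term-by-term differentiation and the extension to $|x|=1$. The calculus and the final simplification $\frac{x^3-x}{\sqrt{1-x^2}}=-x\sqrt{1-x^2}$ are all carried out correctly.
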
	
\begin{proof}
	The result follows immediately by differentiating \eqref{buhari} with respect to $x$.
\end{proof}
\begin{Lemma}\label{lemma2.0.3}
	For all $x\in [0,4)$,
	\begin{equation}
		\label{3}
		\sum_{n=1}^{\infty}\frac{x^{n+3/2}}{n(n+3/2)\binom{2n}{n}}=\frac{4}{9}\sqrt{4-x}\left(\sqrt{\frac{x}{4-x}}(x+24)-3(x+8)\arctan\sqrt{\frac{x}{4-x}}\right).
	\end{equation}
\end{Lemma}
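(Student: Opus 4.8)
The plan is to exploit the special shape of the coefficient $\frac{1}{n(n+3/2)}$, which is tailor-made for a differentiation argument. Writing $f(x)$ for the left-hand side, I first differentiate term by term on the open interval $(0,4)$ (legitimate since $1/\binom{2n}{n}\to 0$ geometrically, giving local uniform convergence). Because $\frac{d}{dx}x^{n+3/2}=(n+3/2)x^{n+1/2}$, the awkward factor $(n+3/2)$ cancels and one is left with the far simpler series
\[
f'(x)=\sum_{n=1}^{\infty}\frac{x^{n+1/2}}{n\binom{2n}{n}}=\sqrt{x}\,\sum_{n=1}^{\infty}\frac{x^{n}}{n\binom{2n}{n}}.
\]

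The second step is to insert the classical generating function $\sum_{n\ge 1}\frac{x^{n}}{n\binom{2n}{n}}=\dfrac{2\sqrt{x}\,\arcsin(\sqrt{x}/2)}{\sqrt{4-x}}$, which I would either quote from \cite{7,8} or establish from the reciprocal-central-binomial generating function $\sum_{n\ge1}x^{n}/\binom{2n}{n}$ via the Beta integral $\binom{2n}{n}^{-1}=(2n+1)\int_0^1\big(t(1-t)\big)^n\,\mathrm{d}t$. This yields the compact formula $f'(x)=\dfrac{2x\,\arcsin(\sqrt{x}/2)}{\sqrt{4-x}}$. I then record the elementary identity $\arcsin(\sqrt{x}/2)=\arctan\sqrt{x/(4-x)}$ (read off the right triangle with opposite side $\sqrt{x}$ and hypotenuse $2$), so that the integrand and the proposed answer are written with the same transcendental function.

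The final step is to recover $f$ from $f'$ using the initial value $f(0)=0$. Rather than integrate $\int_0^x \frac{2u\,\arcsin(\sqrt u/2)}{\sqrt{4-u}}\,\mathrm{d}u$ head-on, the cleanest route is to verify that the claimed closed form $R(x)=\frac{4}{9}\sqrt{4-x}\big(\sqrt{x/(4-x)}\,(x+24)-3(x+8)\arctan\sqrt{x/(4-x)}\big)$ satisfies $R'(x)=f'(x)$ and $R(0)=0$. Setting $\psi(x)=\arcsin(\sqrt x/2)$, with $\psi'(x)=\frac{1}{2\sqrt x\,\sqrt{4-x}}$, and $P(x)=(x+8)\sqrt{4-x}$, with $P'(x)=-\frac{3x}{2\sqrt{4-x}}$, a short computation shows that all the purely algebraic (non-$\psi$) terms in $R'(x)$ cancel identically and only $\frac{2x}{\sqrt{4-x}}\psi(x)$ survives, which is exactly $f'(x)$. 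Since $R(0)=0$ as well, $f\equiv R$ on $[0,4)$.

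The main obstacle is the second step: fixing the generating function for $\sum x^n/(n\binom{2n}{n})$ in the correct normalization and keeping track of the $\arcsin$ versus $\arctan$ branch over the whole range $[0,4)$; the rest is bookkeeping. A minor technical point is the endpoint $x\to 4^{-}$, where one must check that both sides stay finite and that term-by-term differentiation is valid up to but not including $x=4$; at $x=0$ the explicit power $x^{n+3/2}$ makes $f(0)=0$ transparent, which pins down the constant of integration.
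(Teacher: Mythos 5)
Your proposal is correct and rests on exactly the same key identity as the paper, namely $\sum_{n\ge 1}x^{n}/(n\binom{2n}{n})=2\sqrt{x/(4-x)}\,\arctan\sqrt{x/(4-x)}$ (your $\arcsin(\sqrt{x}/2)$ form is the same function); the paper multiplies this by $\sqrt{x}$ and integrates up to the claimed series, while you differentiate the claimed series down to it and verify the stated closed form is the correct antiderivative with $R(0)=f(0)=0$. The two directions are logically equivalent, and your differentiation of $R$ checks out, so this is essentially the paper's proof.
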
	
\begin{proof}
	Observe that,
	\begin{align*}
		&\sum_{n=1}^{\infty}\frac{x^n}{n\binom{2n}{n}}=\frac{1}{2}\sum_{n=1}^{\infty}x^n\int_0^{\frac{\pi}{2}}2\sin^{2n-1}\theta\cos^{2n-1}\theta\,\mathrm{d}\theta=\int_0^{\frac{\pi}{2}}\frac{1}{\sin\theta\cos\theta}\sum_{n=1}^{\infty}(x\sin^2\theta\cos^2\theta)^n\,\mathrm{d}\theta\\
		&=\int_0^{\frac{\pi}{2}}\frac{x\sin\theta\cos\theta}{1-x\sin^2\theta\cos^2\theta}\,\mathrm{d}\theta = \frac{x}{2}\int_0^{\frac{\pi}{2}}\frac{\sin 2\theta}{1-\frac{x}{4}\sin 2\theta}\,\mathrm{d}\theta.
	\end{align*}
	by the change of variable $p=\cos 2\theta$ , we could easily verify that
	$$	\sum_{n=1}^{\infty}\frac{x^n}{n\binom{2n}{n}}=2\sqrt{\frac{x}{4-x}}\arctan\sqrt{\frac{x}{4-x}}$$
	From the above identity, multiply both sides by $\sqrt{x}$ and then integrate with respect to $x$. Thus, the result follows.
\end{proof}
\begin{Lemma}\label{lemma2.0.4}
	For all $x\in [-1,1]$ and $x\neq 0$, 
	\begin{equation}
		\label{4}
		\sum_{n=1}^{\infty}\binom{2n}{n}\frac{x^{2n+2}}{2^{2n+1}(n+1)(2n+3)}=1-\frac{x^2}{6}-\frac{\sqrt{1-x^2}}{2}-\frac{\arcsin x}{2x}.
	\end{equation}
\end{Lemma}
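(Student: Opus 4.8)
The plan is to build the series from the elementary central-binomial generating function $\sum_{n=0}^{\infty}\binom{2n}{n}\frac{x^{2n}}{4^n}=\frac{1}{\sqrt{1-x^2}}$ (obtained from $\sum\binom{2n}{n}y^n=(1-4y)^{-1/2}$ with $y=x^2/4$) and to manufacture the two denominator factors $(n+1)$ and $(2n+3)$ by successive integrations, exactly in the spirit of the preceding lemmas. Since $2^{2n+1}=2\cdot 4^n$, I would write the target as $\tfrac12 S(x)$ with $S(x):=\sum_{n=1}^{\infty}\binom{2n}{n}\frac{x^{2n+2}}{4^n(n+1)(2n+3)}$, and produce $S$ in two stages.

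First stage: multiply the base series by $x$ and integrate from $0$ to $x$. Since $\frac{x}{\sqrt{1-x^2}}=\sum_{n=0}^{\infty}\binom{2n}{n}\frac{x^{2n+1}}{4^n}$ and $\int_0^x \frac{t}{\sqrt{1-t^2}}\,\mathrm{d}t=1-\sqrt{1-x^2}$, this gives $\sum_{n=0}^{\infty}\binom{2n}{n}\frac{x^{2n+2}}{4^n(n+1)}=2\bigl(1-\sqrt{1-x^2}\bigr)$ (using $2n+2=2(n+1)$). Subtracting the $n=0$ term $x^2$ yields the auxiliary function $h(x):=\sum_{n=1}^{\infty}\binom{2n}{n}\frac{x^{2n+2}}{4^n(n+1)}=2\bigl(1-\sqrt{1-x^2}\bigr)-x^2$.

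Second stage: to install the remaining factor $\frac{1}{2n+3}$, note that $\int_0^x t^{2n+2}\,\mathrm{d}t=\frac{x^{2n+3}}{2n+3}$, so $\frac{1}{x}\int_0^x h(t)\,\mathrm{d}t=S(x)$. Using $\int_0^x\sqrt{1-t^2}\,\mathrm{d}t=\tfrac12\bigl(x\sqrt{1-x^2}+\arcsin x\bigr)$, I would obtain $\int_0^x h(t)\,\mathrm{d}t=2x-x\sqrt{1-x^2}-\arcsin x-\frac{x^3}{3}$, whence $S(x)=2-\sqrt{1-x^2}-\frac{\arcsin x}{x}-\frac{x^2}{3}$. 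Multiplying by $\tfrac12$ gives precisely $1-\frac{x^2}{6}-\frac{\sqrt{1-x^2}}{2}-\frac{\arcsin x}{2x}$, as claimed.

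As an alternative one can split $\frac{1}{(n+1)(2n+3)}=\frac{1}{n+1}-\frac{2}{2n+3}$ and evaluate the two resulting sums via the Catalan generating function $\sum C_n y^n=\frac{1-\sqrt{1-4y}}{2y}$ and the integral $\int_0^x \frac{t^2}{\sqrt{1-t^2}}\,\mathrm{d}t=\tfrac12(\arcsin x-x\sqrt{1-x^2})$, respectively. The computation is essentially routine; the only points that need genuine care are the bookkeeping of the omitted $n=0$ terms and the constants of integration at each stage (a miscount there shifts the $\frac{x^2}{6}$ term), together with justifying term-by-term integration on $(-1,1)$ and recovering the endpoint values $x=\pm1$ by continuity of both sides. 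I expect this index-and-constant tracking, rather than any real analytic difficulty, to be the main thing to get right.
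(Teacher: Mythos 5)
Your proof is correct and follows essentially the same route as the paper: both start from $\sum\binom{2n}{n}x^{2n}4^{-n}=(1-x^2)^{-1/2}$ and manufacture the factors $(n+1)$ and $(2n+3)$ by two successive integrations combined with multiplication/division by powers of $x$, differing only in the order in which the two factors are installed and in your handling of the $n=0$ term by explicit subtraction rather than starting from the $n\ge 1$ series. The computations check out, including the constant bookkeeping that produces the $-x^2/6$ term.
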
	
\begin{proof}
	It can be shown that $$\sum_{n=1}^{\infty}\binom{2n}{n}\frac{x^{2n}}{4^n}=\frac{1}{\sqrt{1-x^2}}-1$$
	Thus, multiply both sides of the above identity by $x^2$ and then integrate both sides to get,
	$$\sum_{n=1}^{\infty}\binom{2n}{n}\frac{x^{2n+3}}{(2n+3)4^n}=\frac{1}{6}(3\arcsin x-3x\sqrt{1-x^2}-2x^3)+C$$
	Observe, as $x\to 0$ we have that $C\to 0$. Hence $C=0$. By dividing through by $x^2$ and integrating both sides, we obtain;
	$$	\sum_{n=1}^{\infty}\binom{2n}{n}\frac{x^{2n+2}}{2^{2n+1}(n+1)(2n+3)}=-\frac{x^2}{6}-\frac{\sqrt{1-x^2}}{2}-\frac{\arcsin x}{2x}+C_1$$
	Similarly as $x\to 0$, we have that $C_1\to 1$. Hence, the result follows directly.
\end{proof}

\begin{Lemma}\label{lemma2.0.6}
	For all $x\in [-1,1]$ and $x\neq 0$, 
	\begin{equation}
		\label{6}
		\sum_{n=1}^{\infty}\binom{2n}{n}\frac{x^{2n+2}}{2^{2n+1}(n+1)(2n+1)(2n+3)}=\frac{1}{12}\left(9\sqrt{1-x^2}+\frac{6x^2+3}{x}\arcsin x-2x^2-12\right).
	\end{equation}
\end{Lemma}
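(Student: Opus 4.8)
The plan is to reduce the statement to Lemma \ref{lemma2.0.4} by observing that the summand of \eqref{6} is precisely the summand of \eqref{4} with the extra factor $(2n+1)$ inserted in the denominator, and that such a factor can be created by a single integration. Write $S(x)$ for the left-hand side of \eqref{4} and $T(x)$ for the left-hand side of \eqref{6}. Since $S(t)/t^2=\sum_{n\ge 1}\binom{2n}{n}\frac{t^{2n}}{2^{2n+1}(n+1)(2n+3)}$, term-by-term integration gives $\int_0^x S(t)/t^2\,\mathrm{d}t=\sum_{n\ge 1}\binom{2n}{n}\frac{x^{2n+1}}{2^{2n+1}(n+1)(2n+1)(2n+3)}=T(x)/x$. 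Hence the claim is equivalent to
\begin{equation*}
T(x)=x\int_0^x \frac{S(t)}{t^2}\,\mathrm{d}t,
\end{equation*}
and it remains to carry out this integration using the closed form for $S$ from Lemma \ref{lemma2.0.4}. Term-by-term integration is justified by uniform convergence of the series on compact subsets of $(-1,1)$, and the integrand is bounded near $t=0$ since $S(t)=O(t^4)$.

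Substituting \eqref{4}, I would split
\begin{equation*}
\frac{S(t)}{t^2}=\frac{1}{t^2}-\frac16-\frac{\sqrt{1-t^2}}{2t^2}-\frac{\arcsin t}{2t^3}
\end{equation*}
and integrate each piece. The first two are elementary; the third uses the standard antiderivative $\int \frac{\sqrt{1-t^2}}{t^2}\,\mathrm{d}t=-\frac{\sqrt{1-t^2}}{t}-\arcsin t$; and the fourth is handled by parts ($u=\arcsin t$, $\mathrm{d}v=t^{-3}\,\mathrm{d}t$), which reduces it to $\int \frac{\mathrm{d}t}{t^2\sqrt{1-t^2}}=-\frac{\sqrt{1-t^2}}{t}$. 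Collecting everything produces an antiderivative of the form $-\frac1t-\frac{t}{6}+\frac{3\sqrt{1-t^2}}{4t}+\frac{\arcsin t}{2}+\frac{\arcsin t}{4t^2}$.

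The delicate point, which I expect to be the main obstacle, is the evaluation at the lower limit $t\to 0$: the three pieces $-1/t$, $\tfrac34\sqrt{1-t^2}/t$, and $\tfrac14\arcsin t/t^2$ each blow up individually, so I must expand them to confirm that their $1/t$-singularities cancel exactly and that the residual contribution tends to $0$. Once this cancellation is checked, the definite integral equals the antiderivative evaluated at $x$, and multiplying by $x$ yields
\begin{equation*}
T(x)=-1-\frac{x^2}{6}+\frac{3\sqrt{1-x^2}}{4}+\frac{x\arcsin x}{2}+\frac{\arcsin x}{4x}.
\end{equation*}
A final regrouping — combining $\tfrac12 x\arcsin x+\tfrac14 x^{-1}\arcsin x=\frac{(6x^2+3)\arcsin x}{12x}$ and placing all terms over $12$ — recasts this as the right-hand side of \eqref{6}, completing the proof.
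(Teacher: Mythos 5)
Your proof is correct; I checked the antiderivative, the cancellation of the $1/t$ singularities at the lower limit (the coefficients $-1+\tfrac34+\tfrac14$ sum to zero and the residual $-\tfrac16-\tfrac38+\tfrac12+\tfrac1{24}$ of the $O(t)$ part vanishes as well), and the final regrouping against the stated right-hand side. Your route differs from the paper's in its starting point: the paper does not invoke Lemma \ref{lemma2.0.4} at all, but instead asserts the auxiliary identity $\sum_{n\ge1}\binom{2n}{n}\frac{x^{2n+2}}{2^{2n+1}(n+1)(2n+1)}=x\arcsin x+\sqrt{1-x^2}-\frac{x^2}{2}-1$ (factor $2n+1$ already present, $2n+3$ missing) and then integrates once over $[0,x]$ and divides by $x$ to insert the factor $2n+3$. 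You go the other way around: you take the already proven Lemma \ref{lemma2.0.4} (factor $2n+3$ present, $2n+1$ missing), divide by $t^2$, and integrate to insert $2n+1$. The underlying technique --- a single integration to manufacture the missing linear factor in the denominator --- is the same, but your version has the merit of resting entirely on a result established earlier in the paper rather than on a new unproved generating function, at the price of the extra delicacy you correctly flag: the antiderivative has individually divergent pieces at $t=0$ whose singular parts must be shown to cancel, whereas the paper's choice of auxiliary identity yields a bounded integrand with trivially vanishing constant of integration.
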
	
\begin{proof}
	Notice, 
	$$\sum_{n=1}^{\infty}\binom{2n}{n}\frac{x^{2n+2}}{2^{2n+1}(n+1)(2n+1)}=x\arcsin x+\sqrt{1-x^2}-\frac{x^2}{2}-1$$
	The result follows from the above identity.
\end{proof}
For convenience we recall the well known \textit{Wallis Integral} formulas that will serve throughout this paper.
\begin{Lemma}[Wallis Integrals]\label{lemma2.0.7}
	For any non-negative integer $n$,
	\begin{align}
		&\int_{0}^{\frac{\pi}{2}}\sin^{2n}x\,\mathrm{d}x=\frac{\pi}{2}\binom{2n}{n}\frac{1}{4^n},\\
		&\int_{0}^{\frac{\pi}{2}}\sin^{2n+1}x\,\mathrm{d}x=\frac{4^n}{(2n+1)\binom{2n}{n}}.
	\end{align}
\end{Lemma}	
In what follows, we present a key lemma that is highly useful for establishing several results. It states a general formula for linear recursive sequences of order 1 with non-constant coefficients.
\begin{Lemma}\label{lemma2.0.8}
	Let $a_n, b_n$ and $r_n$ be sequences with $a_n,b_n\neq 0$. Assume that $z_n$ satisfies 
	\begin{equation}
		a_nz_n=b_nz_{n-1}+r_n, \quad n\geq 1.
	\end{equation}
	with initial condition $z_0$. Then 
	\begin{equation}
		z_n=\frac{b_1b_2\cdots b_n}{a_1a_2\cdots a_n}\left(z_0+\sum_{k=1}^n\frac{a_1a_2\cdots a_{k-1}}{b_1b_2\cdots b_k}r_k\right).
	\end{equation}
	
\end{Lemma}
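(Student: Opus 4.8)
The plan is to solve this first-order linear recurrence by the method of summation factors (the discrete analogue of an integrating factor), which converts the recurrence into a telescoping sum whose total is exactly the asserted formula. First I would normalize the recurrence: dividing $a_n z_n = b_n z_{n-1} + r_n$ by $a_n$ gives $z_n = \frac{b_n}{a_n} z_{n-1} + \frac{r_n}{a_n}$, which is legitimate because $a_n \neq 0$. The homogeneous part $z_n = \frac{b_n}{a_n} z_{n-1}$ is solved by the running product $\frac{b_1 \cdots b_n}{a_1 \cdots a_n}$, so I introduce the summation factor $c_n := \frac{a_1 a_2 \cdots a_n}{b_1 b_2 \cdots b_n}$, with the empty-product convention $c_0 = 1$, and set $w_n := c_n z_n$.

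The key computation is that multiplying the normalized recurrence by $c_n$ collapses the homogeneous term. Using the identity $c_n \frac{b_n}{a_n} = c_{n-1}$, one obtains $w_n = c_{n-1} z_{n-1} + \frac{c_n}{a_n} r_n = w_{n-1} + \frac{c_n}{a_n} r_n$, and since $\frac{c_n}{a_n} = \frac{a_1 \cdots a_{n-1}}{b_1 \cdots b_n}$ this reads $w_n - w_{n-1} = \frac{a_1 \cdots a_{n-1}}{b_1 \cdots b_n}\, r_n$. Summing this telescoping relation from $1$ to $n$ and using $w_0 = c_0 z_0 = z_0$ gives $w_n = z_0 + \sum_{k=1}^n \frac{a_1 \cdots a_{k-1}}{b_1 \cdots b_k}\, r_k$. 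Finally, dividing by $c_n$, equivalently multiplying through by $\frac{b_1 \cdots b_n}{a_1 \cdots a_n}$, recovers the stated closed form for $z_n$.

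As an alternative that avoids introducing $c_n$, the identity can be checked directly by induction on $n$: the base case $n = 1$ is just the recurrence itself, and in the inductive step one substitutes the formula for $z_{n-1}$ into $z_n = (b_n z_{n-1} + r_n)/a_n$ and verifies that the extra term $r_n/a_n$ reproduces precisely the $k = n$ summand while the remaining terms inherit the correct prefactor $\frac{b_1 \cdots b_n}{a_1 \cdots a_n}$. I expect no genuine obstacle in either route; the only delicate points are the bookkeeping of the running products and the empty-product conventions (the numerator $a_1 \cdots a_{k-1}$ is empty at $k = 1$, and the global prefactor reduces to $b_1/a_1$ when $n = 1$), which must be fixed once and applied consistently so that the telescoping sum and the inductive step line up correctly.
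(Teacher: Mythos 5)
Your argument is correct: the summation factor $c_n = \frac{a_1\cdots a_n}{b_1\cdots b_n}$ does satisfy $c_n\frac{b_n}{a_n}=c_{n-1}$, so $w_n=c_nz_n$ telescopes exactly as you claim, and dividing by $c_n$ (which is nonzero since all $a_k,b_k\neq 0$) yields the stated closed form; the induction route you sketch would work equally well. For comparison, the paper does not actually prove this lemma at all --- it simply cites Moll's book (p.~43) and notes a reformulated version from Andrica and Bagdasar --- so your write-up supplies a self-contained derivation where the paper defers to a reference. Your method is the standard integrating-factor technique for first-order linear recurrences, which is almost certainly what the cited source does as well; the only points worth making explicit in a final version are the empty-product conventions you already flag ($c_0=1$ and the $k=1$ summand having an empty numerator), since the paper later applies the lemma starting from $z_0$ given by an integral such as $I(0)$ or $\wp(0)$ and the bookkeeping must match.
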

\begin{proof}
	A proof was stated in \cite[p.\ 43]{3}.
\end{proof}
V. Moll widely used this result in \cite{6} to obtain several closed form for integrals, in particular some trigonometric integrals \cite[Chapter 5]{6}. This result, yet practical, seems to be unknown. It appears in the book of Andrica and Bagdasar \cite[p.\ 21]{6} in the following form:\\
Let $(a_n)_{n\geq 0}$ and $(b_n)_{n\geq 0}$ be two given sequences of complex numbers, and let $(x_n)_{n\geq 0}$  be  the sequence defined by 
$x_{n+1}=a_nx_n+b_n,  n=0,1,2,\dots,$
where $x_0=\alpha$ is a complex number. We have the following result.
If $a_n\neq 0$ for every $n\neq 0$, then the following formula holds:
\begin{equation}
	x_n=a_0\cdots a_{n-1}\left(\alpha+\sum_{k=0}^{n-1}\frac{b_k}{a_0\cdots a_k}\right), \qquad n=0,1,2,\dots,
\end{equation}
\section{Evaluation of key integrals and refinement of main results on series involving \texorpdfstring{$\binom{2n}{n}\binom{4n}{2n}^{-1}$}{binomial(2n,n)binomial(4n,2n)} arising from  \texorpdfstring{\cite{1}}{1}} 	
Consider the following integrals,
$$\mathcal{B}(k)=\int_0^{1/2}\frac{t^k}{\sqrt{1-t}}\,\mathrm{d}t,\quad  \varphi(2k)=\int_0^1 t^{2k}\sqrt{1+t^2}\,\mathrm{d}t	\quad \text{and} \quad \varphi(2k+1)=\int_0^1 t^{2k+1}\sqrt{1+t^2}\,\mathrm{d}t$$
for $k\in \mathbb{Z}_{\geq 0}$.	\\
They appeared in the formulas of many theorems in \cite{1}. In fact the author added the following comment about $\mathcal{B}(k)$ and $\varphi(2k+1)$ in \cite[p.\ 17]{1}: ``\emph{It is easy to deduce the primitives of the integrals for some particular values of $k$ and $m$; however, we do not have compact closed forms in terms of elementary functions that can easily generate an infinite number of solutions.''}\\
In this section, we start by answering this question by providing closed form for $\mathcal{B}(k)$, $\varphi(2k+1)$ and $\varphi(2k)$ too. We then use the obtained results for $\mathcal{B}(k)$, $\varphi(2k+1)$ to restate main theorems in \cite{1} involving $\binom{2n}{n}\binom{4n}{2n}^{-1}$.
\begin{prop}\label{prop2.1.1}
	For $k\in \mathbb{Z}_{\geq 0}$,
	$$(\mathcal{B}(k))=\int\frac{t^k}{\sqrt{1-t}}\,\mathrm{d}t=-\sum_{p=0}^k\binom{k}{p}\frac{(-1)^p}{p+\frac{1}{2}}(1-t)^{p+\frac{1}{2}}$$
\end{prop}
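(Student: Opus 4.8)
The plan is to rationalize the square root by a single substitution that converts the integrand into a finite sum of power functions, which then integrates term by term. First I would substitute $u = 1-t$, so that $t = 1-u$, $\sqrt{1-t} = \sqrt{u}$, and $\mathrm{d}t = -\mathrm{d}u$. This clears the denominator and leaves a polynomial numerator in $u$.

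Next I would expand $t^k = (1-u)^k = \sum_{p=0}^{k}\binom{k}{p}(-1)^p u^p$ by the binomial theorem, so that the integrand becomes $\sum_{p=0}^{k}\binom{k}{p}(-1)^p u^{p-1/2}$. Since this is a \emph{finite} sum, I may integrate it term by term with no convergence concerns, using $\int u^{p-1/2}\,\mathrm{d}u = \dfrac{u^{p+1/2}}{p+\frac{1}{2}}$ and carrying along the factor $-1$ coming from $\mathrm{d}t = -\mathrm{d}u$. This yields $-\sum_{p=0}^{k}\binom{k}{p}\dfrac{(-1)^p}{p+\frac{1}{2}}u^{p+1/2}$.

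Finally I would back-substitute $u = 1-t$ to recover exactly the claimed primitive. The argument is mechanical rather than deep; the only place needing a moment of care is the \emph{bookkeeping of signs}, since a minus sign enters both from the substitution $\mathrm{d}t = -\mathrm{d}u$ and from the alternating factors $(-1)^p$ in the binomial expansion, and one must check that these combine into precisely the single overall minus sign displayed in the statement. As an independent sanity check, one could instead differentiate the right-hand side directly and verify that it returns $t^k/\sqrt{1-t}$, which bypasses the substitution altogether and confirms the formula up to the (irrelevant) constant of integration.
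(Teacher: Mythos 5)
Your proposal is correct and follows essentially the same route as the paper's own proof: the substitution $x=1-t$ (your $u$), binomial expansion of $(1-t)^k$ in the new variable, and term-by-term integration of the resulting finite sum of powers $u^{p-1/2}$. The sign bookkeeping you flag works out exactly as you describe, and your suggested check by direct differentiation is a valid (if unnecessary) alternative.
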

\begin{proof}
	Set $x=1-t$.
	So,\\
	\vspace{-3mm}
	$$\left(\mathcal{B}(k) \right)=-\int\frac{(1-x)^k}{\sqrt{x}}\,\mathrm{d}x$$
	By Binomial expansion, we have $(1-x)^k=\sum_{p=0}^k\binom{k}{p}(-1)^px^p$. So;
	\vspace{-3mm}
	$$\frac{(1-x)^k}{\sqrt{x}}=\sum_{p=0}^k\binom{k}{p}(-1)^px^{p-\frac{1}{2}}$$
	Which implies that,
	\vspace{-2mm}
	\begin{align*}
		\left(\mathcal{B}(k) \right)&=-\int \sum_{p=0}^k\binom{k}{p}(-1)^px^{p-\frac{1}{2}}\,\mathrm{d}x=-\sum_{p=0}^k\binom{k}{p}(-1)^p\int x^{p-\frac{1}{2}}\,\mathrm{d}x\\
		&= -\sum_{p=0}^k\binom{k}{p}\frac{(-1)^p}{p+\frac{1}{2}}x^{p+\frac{1}{2}}
	\end{align*}
	with $x=1-t$, we have:
	\vspace{-3mm}
	$$\left(\mathcal{B}(k) \right)=-\sum_{p=0}^k\binom{k}{p}\frac{(-1)^p}{p+\frac{1}{2}}(1-t)^{p+\frac{1}{2}}$$	
\end{proof}
\begin{Theorem}\label{theorem3.0.1}
	For $k\in \mathbb{Z}_{\geq 0}$,
	\begin{equation}
		\label{thm 1}
		\mathcal{B}(k)=\int_0^{1/2}\frac{t^k}{\sqrt{1-t}}\,\mathrm{d}t=\sum_{p=0}^k\binom{k}{p}\frac{(-1)^p}{2p+1}\left(2-\frac{\sqrt{2}}{2^p}\right)
	\end{equation}
\end{Theorem}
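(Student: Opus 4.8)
The plan is to derive this closed form directly from Proposition~\ref{prop2.1.1}, which already supplies an antiderivative of the integrand. Since $\mathcal{B}(k)$ is a definite integral, it suffices to evaluate that antiderivative at the two endpoints $t=1/2$ and $t=0$ and subtract, so no further integration is needed.

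First I would invoke the Fundamental Theorem of Calculus together with Proposition~\ref{prop2.1.1} to write
$$\mathcal{B}(k)=\left[-\sum_{p=0}^k\binom{k}{p}\frac{(-1)^p}{p+\frac12}(1-t)^{p+\frac12}\right]_{t=0}^{t=1/2}.$$
The only $t$-dependent factor inside the sum is $(1-t)^{p+1/2}$, so I would evaluate just this factor at each endpoint: at $t=1/2$ it equals $(1/2)^{p+1/2}=2^{-p}\cdot 2^{-1/2}=\frac{1}{2^p\sqrt2}$, and at $t=0$ it equals $1$.

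Substituting these values and pulling the sign through the subtraction gives
$$\mathcal{B}(k)=\sum_{p=0}^k\binom{k}{p}\frac{(-1)^p}{p+\frac12}\left(1-\frac{1}{2^p\sqrt2}\right).$$
I would then clear the half-integer denominator using $\frac{1}{p+1/2}=\frac{2}{2p+1}$ and simplify the bracket via $\frac{2}{\sqrt2}=\sqrt2$, so that $\frac{2}{2^p\sqrt2}=\frac{\sqrt2}{2^p}$. This turns each summand into $\frac{(-1)^p}{2p+1}\bigl(2-\frac{\sqrt2}{2^p}\bigr)$, which is precisely the right-hand side of \eqref{thm 1}.

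Since the antiderivative is already established in Proposition~\ref{prop2.1.1}, there is essentially no conceptual obstacle here: the whole argument is an endpoint evaluation followed by elementary algebraic simplification. The only point requiring genuine care is the bookkeeping in the final step, namely tracking the sign introduced when subtracting the lower-limit value and correctly combining the factor $2$ arising from $\frac{1}{p+1/2}$ with the $\frac{1}{\sqrt2}$ arising from $(1/2)^{1/2}$ to recover the factor $\sqrt2$ in the stated closed form.
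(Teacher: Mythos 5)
Your proposal is correct and follows exactly the paper's own argument: apply the antiderivative from Proposition~\ref{prop2.1.1}, evaluate at $t=1/2$ and $t=0$, and simplify using $\frac{1}{p+1/2}=\frac{2}{2p+1}$ and $(1/2)^{p+1/2}=\frac{1}{2^p\sqrt{2}}$. The algebra checks out and no further comment is needed.
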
	
\begin{proof}
	We get using Proposition 2.0.1,
	$$\left(\mathcal{B}(k) \right)=-\sum_{p=0}^k\binom{k}{p}\frac{(-1)^p}{p+\frac{1}{2}}(1-t)^{p+\frac{1}{2}}$$
	Hence,
	$$\mathcal{B}(k)=\left(\mathcal{B}(k)\right)\bigg|^{\frac{1}{2}}_0 =-\sum_{p=0}^k\binom{k}{p}\frac{(-1)^p}{p+\frac{1}{2}}\left(\left(\frac{1}{2}\right)^{p+\frac{1}{2}}-1\right)$$
	and finally, we get,
	$$\mathcal{B}(k)=\sum_{p=0}^k\binom{k}{p}\frac{(-1)^p}{2p+1}\left(2-\frac{\sqrt{2}}{2^p}\right)$$
\end{proof}	
\begin{remark}  In \cite{1} the author displayed the following first values:
	
	$$	\mathcal{B}(0)=2-\sqrt{2}, \ 
	\mathcal{B}(1)=\frac{8-5\sqrt{2}}{6}, \ 
	\mathcal{B}(2)=\frac{64-43\sqrt{2}}{60},$$
	$$
	\mathcal{B}(3)=\frac{256-177\sqrt{2}}{280}, \ 
	\mathcal{B}(4)=\frac{4096-2867\sqrt{2}}{5040}.$$
\end{remark}
While working on the integral $\mathcal{B}(k)$ , the first author observed, based on the above particular cases that  $\mathcal{B}(k)=\nu_1(k)-\nu_2(k)\sqrt{2}$ where $\nu_1(k)$ and $\nu_2(k)$ are positive rational numbers. Trying to analyze the ratio $\nu_1(k)/\nu_2(k)$ for the first values of $k$, he noticed that the result gets close to $\sqrt{2}$ which suggested to him that
\begin{equation}
	\lim_{k\to +\infty}\frac{\nu_1(k)}{\nu_2(k)}=\sqrt{2}.
\end{equation}
This observation was later confirmed with the evaluation of $\mathcal{B}(k)$. Now since   
\begin{align*}
	\nu_1(k):=2\sum_{p=0}^k\binom{k}{p}\frac{(-1)^p}{2p+1} \qquad \text{and} \qquad 	\nu_2(k):=\sum_{p=0}^k\binom{k}{p}\frac{(-1/2)^p}{2p+1}.
\end{align*}
it is easy to prove that 
\begin{equation*}
	\lim_{k\to +\infty}\frac{\nu_1(k)}{\nu_2(k)}=\sqrt{2}.
\end{equation*}
\begin{prop}\label{prop2.1.2}
	For $k\in \mathbb{Z}_{\geq 0}$,
	$$(\varphi(2k+1))=\int t^{2k+1}\sqrt{1+t^2}\,\mathrm{d}t=\sum_{p=0}^k\binom{k}{p}(-1)^{k-p}\frac{\sqrt{1+t^2}(1+t^2)^{p+1}}{2p+3}$$
\end{prop}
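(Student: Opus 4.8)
The plan is to mirror the proof of the preceding proposition (Proposition~\ref{prop2.1.1}), this time exploiting the fact that the exponent $2k+1$ of $t$ is \emph{odd}. The key move is the substitution $u=1+t^2$, under which $\mathrm{d}u=2t\,\mathrm{d}t$ and $t^2=u-1$. Since
$$t^{2k+1}\,\mathrm{d}t=(t^2)^k\cdot t\,\mathrm{d}t=(u-1)^k\cdot\tfrac{1}{2}\,\mathrm{d}u,$$
the single leftover factor of $t$ (available precisely because the power is odd) is absorbed into $\mathrm{d}u$, and the integral collapses to
$$\int t^{2k+1}\sqrt{1+t^2}\,\mathrm{d}t=\frac{1}{2}\int(u-1)^k u^{1/2}\,\mathrm{d}u.$$

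Next I would expand $(u-1)^k=\sum_{p=0}^k\binom{k}{p}(-1)^{k-p}u^p$ by the binomial theorem and integrate term by term:
$$\frac{1}{2}\sum_{p=0}^k\binom{k}{p}(-1)^{k-p}\int u^{p+1/2}\,\mathrm{d}u=\frac{1}{2}\sum_{p=0}^k\binom{k}{p}(-1)^{k-p}\frac{u^{p+3/2}}{p+\frac{3}{2}}=\sum_{p=0}^k\binom{k}{p}(-1)^{k-p}\frac{u^{p+3/2}}{2p+3}.$$
Finally I would resubstitute $u=1+t^2$ and factor $u^{p+3/2}=(1+t^2)^{p+1}\sqrt{1+t^2}$ to recover the claimed antiderivative exactly.

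There is no serious obstacle here: the computation is routine once the substitution is fixed, and unlike the integrals whose \emph{primitives} resisted closed form in \cite{1}, the oddness of the exponent makes the reduction to a single power of $u$ immediate. The only points requiring care are bookkeeping ones — keeping the sign $(-1)^{k-p}$ straight (it arises from expanding $(u-1)^k$ rather than $(1-u)^k$, in contrast with the $\mathcal{B}(k)$ computation, where the substitution $x=1-t$ produced $(-1)^p$) and correctly rewriting $\tfrac{1}{p+3/2}$ as $\tfrac{2}{2p+3}$ so that the leading factor of $\tfrac{1}{2}$ cancels cleanly.
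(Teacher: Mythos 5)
Your proof is correct, and it reaches the stated antiderivative by a more direct route than the paper. The paper substitutes $t=\tan x$, rewrites the integrand as $\tan^{2k+1}(x)\sec^3(x)$, expands $\tan^{2k}(x)=(\sec^2 x-1)^k$ binomially, and integrates each term $\sec'(x)\sec^{2p+2}(x)$ to get $\sec^{2p+3}(x)/(2p+3)$ before converting back via $\sec x=\sqrt{1+t^2}$. Your substitution $u=1+t^2$ accomplishes exactly the same reduction without the trigonometric detour: your $(u-1)^k$ is the paper's $(\sec^2 x-1)^k$, and your term-by-term integration of $\tfrac{1}{2}u^{p+1/2}$ is the paper's integration of $\sec'\sec^{2p+2}$. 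The two proofs are therefore structurally isomorphic — same binomial expansion, same sign $(-1)^{k-p}$, same denominators $2p+3$ — but yours is the more elementary presentation, since it uses only the observation that the odd power of $t$ leaves a factor $t\,\mathrm{d}t=\tfrac{1}{2}\,\mathrm{d}u$ and requires no trigonometric identities or knowledge of $\int\sec'\sec^{m}\,\mathrm{d}x$. All of your intermediate computations check out, including the cancellation of the leading $\tfrac12$ against $\tfrac{1}{p+3/2}=\tfrac{2}{2p+3}$ and the factorization $u^{p+3/2}=(1+t^2)^{p+1}\sqrt{1+t^2}$.
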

\begin{proof}
	Substitute $t=\tan(x)$ \\
	Then, 
	$$\left(\varphi(2k+1)\right)=\int \tan^{2k+1}(x)\sec^3(x)\,\mathrm{d}x$$
	We can express $\tan^{2k+1}(x)$ in terms of $\sec(x)$ as follows;
	\begin{equation}
		\label{proof2}
		\tan^{2k}(x)=\left(\sec^2(x)-1\right)^k=\sum_{p=0}^k\binom{k}{p}(-1)^{k-p}\sec^{2p}(x)
	\end{equation}
	Using (\eqref{proof2}), we have that;
	\vspace{-2mm}
	$$\tan^{2k+1}(x)\sec^3(x)=\sum_{p=0}^k\binom{k}{p}(-1)^{k-p}\sec'(x)\sec^{2p+2}(x)$$
	where $\sec'(x)=\frac{\sin(x)}{\cos^2(x)}$ which implies
	\vspace{-3mm}
	\begin{align*}
		\left(\varphi(2k+1)\right) &=\int \tan^{2k+1}(x)\sec^3(x)\,\mathrm{d}x=\int \sum_{p=0}^k\binom{k}{p}(-1)^{k-p}\sec'(x)\sec^{2p+2}(x)\,\mathrm{d}x\\
		&= \sum_{p=0}^k\binom{k}{p}(-1)^{k-p}\int\sec'(x)\sec^{2p+2}(x)\,\mathrm{d}x\\
		&=\sum_{p=0}^k\binom{k}{p}(-1)^{k-p}\frac{\sec^{2p+3}(x)}{2p+3}
	\end{align*}
	Recall $t=\tan(x)$, we have $\sec(x)=\sqrt{1+t^2}$.\\
	It follows that:
	$$\left(\varphi(2k+1)\right)=\sum_{p=0}^k\binom{k}{p}(-1)^{k-p}\frac{\sqrt{1+t^2}(1+t^2)^{p+1}}{2p+3}$$	
\end{proof}
\begin{Theorem}\label{theorem3.0.2}
	For $k\in \mathbb{Z}_{\geq 0}$,
	\begin{equation}
		\label{thm2}
		\varphi(2k+1) = \int_{0}^1t^{2k+1}\sqrt{1+t^2}\,\mathrm{d}t=\sum_{p=0}^k\binom{k}{p}(-1)^{k-p}\frac{\sqrt{2}\cdot 2^{p+1}-1}{2p+3}
	\end{equation}
\end{Theorem}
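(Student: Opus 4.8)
The plan is to apply Proposition \ref{prop2.1.2}, which already supplies the indefinite integral, and then simply evaluate it between the limits $0$ and $1$. Concretely, Proposition \ref{prop2.1.2} gives the antiderivative
$$\left(\varphi(2k+1)\right)=\sum_{p=0}^k\binom{k}{p}(-1)^{k-p}\frac{\sqrt{1+t^2}(1+t^2)^{p+1}}{2p+3},$$
so that $\varphi(2k+1)=\left(\varphi(2k+1)\right)\big|_0^1$. This is the exact analogue of how Theorem \ref{theorem3.0.1} was deduced from Proposition \ref{prop2.1.1}, so I would follow the same two-step pattern: first record the antiderivative, then substitute the endpoints termwise.

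The substitution itself is the only computation. At the upper limit $t=1$ we have $1+t^2=2$, so the factor $\sqrt{1+t^2}(1+t^2)^{p+1}$ collapses to $\sqrt{2}\cdot 2^{p+1}$; at the lower limit $t=0$ the same factor equals $1$. Taking the difference inside each summand produces $\dfrac{\sqrt{2}\cdot 2^{p+1}-1}{2p+3}$, and reinstating the weight $\binom{k}{p}(-1)^{k-p}$ yields precisely the asserted closed form
$$\varphi(2k+1)=\sum_{p=0}^k\binom{k}{p}(-1)^{k-p}\frac{\sqrt{2}\cdot 2^{p+1}-1}{2p+3}.$$

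I do not expect any genuine obstacle here: all the substance is front-loaded into Proposition \ref{prop2.1.2}, and the theorem is merely its definite-integral specialization. The only point meriting a moment's care is that the antiderivative be legitimately evaluated on the whole interval $[0,1]$, which is immediate since $t^{2k+1}\sqrt{1+t^2}$ is continuous there and $\sqrt{1+t^2}$ is smooth with no singularity on $[0,1]$. Hence the Fundamental Theorem of Calculus applies without reservation, and the proof reduces to the endpoint bookkeeping described above.
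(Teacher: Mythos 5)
Your proposal is correct and follows exactly the same route as the paper: both invoke Proposition \ref{prop2.1.2} for the antiderivative and evaluate it at the endpoints, where $t=1$ gives $\sqrt{2}\cdot 2^{p+1}$ and $t=0$ gives $1$ in each summand. The paper's own proof is in fact terser than yours, omitting the endpoint arithmetic that you spell out.
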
	
\begin{proof}
	Using Proposition 2.0.2
	Then:
	$$\varphi(2k+1) = \int_{0}^1t^{2k+1}\sqrt{1+t^2}\,\mathrm{d}t=\left(\varphi(2k+1)\right)\bigg|^1_0=\sum_{p=0}^k\binom{k}{p}(-1)^{k-p}\frac{\sqrt{2}\cdot 2^{p+1}-1}{2p+3}$$
\end{proof}
At this stage we are done with Bhandari's quest on $\mathcal{B}(k)$ and $\varphi(2k+1)$. Wondering about $\varphi(2k)$ for completeness we finally come out with the following result.
\begin{Theorem}\label{theorem2.1.3}
	For $k\in \mathbb{Z}_{\geq 0}$,
	\begin{equation}
		\label{thm3.0.3}
		\varphi(2k)=\int_0^1t^{2k}\sqrt{1+t^2}\,\mathrm{d}t=\frac{(-1)^k}{4^k} \frac{\binom{2k}{k}}{k+1}\cdot \left(I(0)+\sqrt{2}\sum_{p=1}^k\frac{(-1)^p4^p}{\binom{2p}{p}}\right).
	\end{equation}
	where $I(0)=\frac{\sqrt{2}}{2}+\frac{\ln(1+\sqrt{2})}{2}$.	
\end{Theorem}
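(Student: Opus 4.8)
The plan is to evaluate $\varphi(2k)=\int_0^1 t^{2k}\sqrt{1+t^2}\,\mathrm{d}t$ by setting up a recurrence in $k$ and then solving it explicitly using Lemma~\ref{lemma2.0.8}. The substitution $t=\tan x$ turns the integral into $\int_0^{\pi/4}\tan^{2k}x\,\sec^3 x\,\mathrm{d}x$; alternatively one can work directly with $\sqrt{1+t^2}$. Either way, a reduction formula should emerge by integration by parts. The natural move is to write $t^{2k}\sqrt{1+t^2}=t^{2k-1}\cdot t\sqrt{1+t^2}$ and integrate by parts, taking $u=t^{2k-1}$ and $\mathrm{d}v=t\sqrt{1+t^2}\,\mathrm{d}t$ so that $v=\tfrac13(1+t^2)^{3/2}$. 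This produces a boundary term at $t=1$ (giving a factor involving $2^{3/2}=2\sqrt2$) minus a multiple of $\int_0^1 t^{2k-2}(1+t^2)^{3/2}\,\mathrm{d}t$, and the latter expands as $\varphi(2k-2)+\varphi(2k)$ after writing $(1+t^2)^{3/2}=(1+t^2)\sqrt{1+t^2}$.

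\textbf{Deriving the recurrence.} Carrying out the integration by parts gives an identity of the form
\begin{equation*}
\varphi(2k)=\frac{2\sqrt2}{3}-\frac{2k-1}{3}\bigl(\varphi(2k-2)+\varphi(2k)\bigr),
\end{equation*}
which, after collecting the $\varphi(2k)$ terms, becomes
\begin{equation*}
(2k+2)\,\varphi(2k)=2\sqrt2-(2k-1)\,\varphi(2k-2).
\end{equation*}
This is exactly a first-order linear recurrence with non-constant coefficients: setting $z_k=\varphi(2k)$, it reads $a_k z_k=b_k z_{k-1}+r_k$ with $a_k=2k+2$, $b_k=-(2k-1)$, and $r_k=2\sqrt2$, matching the hypotheses of Lemma~\ref{lemma2.0.8}.

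\textbf{Solving via Lemma~\ref{lemma2.0.8}.} Applying the closed form from that lemma, I would compute the telescoping products $\prod_{j=1}^{k} b_j/a_j=\prod_{j=1}^k \tfrac{-(2j-1)}{2j+2}$. The key observation is that this product collapses to $\frac{(-1)^k}{4^k}\cdot\frac{\binom{2k}{k}}{k+1}$, using $\prod_{j=1}^k(2j-1)=\frac{(2k)!}{4^k\,k!}$ and $\prod_{j=1}^k(2j+2)=2^k (k+1)!$, which together yield $\frac{(2k)!}{4^k\,k!}\big/\bigl(2^k(k+1)!\bigr)$ after simplification; I would verify the $4^k$ and $\binom{2k}{k}/(k+1)$ factors carefully here. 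The inner sum $\sum_{p=1}^{k}\frac{a_1\cdots a_{p-1}}{b_1\cdots b_p}r_p$ then contributes $\sqrt2\sum_{p=1}^k \frac{(-1)^p 4^p}{\binom{2p}{p}}$ once the reciprocal products are simplified symmetrically, and the $z_0=\varphi(0)$ term supplies the constant $I(0)=\varphi(0)=\int_0^1\sqrt{1+t^2}\,\mathrm{d}t=\tfrac{\sqrt2}{2}+\tfrac{\ln(1+\sqrt2)}{2}$, evaluated directly by the standard formula $\int\sqrt{1+t^2}\,\mathrm{d}t=\tfrac12\bigl(t\sqrt{1+t^2}+\operatorname{arcsinh}t\bigr)$.

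\textbf{The main obstacle} will be the bookkeeping that converts the raw products $\prod b_j/a_j$ and their partial reciprocals into the clean central-binomial form stated in the theorem; sign tracking on the $(-1)^k$ and matching the index conventions of Lemma~\ref{lemma2.0.8} (which begins its sum at $k=1$ with $z_0$ separate) against the stated formula require care. Once those products are recognized as $\frac{(-1)^k}{4^k}\frac{\binom{2k}{k}}{k+1}$ and its analogue, the result follows by direct substitution, and I would finish by checking the base case $k=0$ against $\varphi(0)=I(0)$ and one further value (say $\varphi(2)=\tfrac{\sqrt2}{2}-\tfrac{\ln(1+\sqrt2)}{8}+\cdots$) to confirm the constants.
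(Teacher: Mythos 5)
Your proposal is correct and follows essentially the same route as the paper: both derive the first-order recurrence $(2k+2)\varphi(2k)=2\sqrt{2}-(2k-1)\varphi(2k-2)$ by integration by parts (the paper differentiates $\sqrt{1+t^2}$ and recombines two reduction identities, while you differentiate $t^{2k-1}$ directly, but the resulting recurrence is identical) and then solve it with Lemma~\ref{lemma2.0.8}. One small slip: $\prod_{j=1}^k(2j-1)=\frac{(2k)!}{2^k\,k!}$, not $\frac{(2k)!}{4^k\,k!}$; with the corrected double factorial the product does collapse to $\frac{(-1)^k}{4^k}\frac{\binom{2k}{k}}{k+1}$ exactly as you state.
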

\begin{proof}
	Let, 
	$$I(m):=\int_{0}^1 t^{2m}\sqrt{1+t^2}\,\mathrm{d}t =\varphi(2m)$$
	Then, 
	\vspace{-1mm}
	$$I(m+1)=\int_{0}^1 t^{2m+2}\sqrt{1+t^2}\,\mathrm{d}t$$
	\vspace{2mm}
	Using integration by parts, we have that;
	\begin{equation}
		\label{proof3}
		I(m+1)=\frac{\sqrt{2}}{2m+3}-\frac{1}{2m+3}\int_0^1\frac{t^{2m+4}}{\sqrt{1+t^2}}\,\mathrm{d}t
	\end{equation}
	We deduce from (\eqref{proof3}):
	\begin{equation}
		\label{proof3i}
		\frac{2m+1}{2m+3}I(m)=\frac{\sqrt{2}}{2m+3}-\frac{1}{2m+3}\int_0^1\frac{t^{2m+2}}{\sqrt{1+t^2}}\,\mathrm{d}t
	\end{equation}
	Adding both (\eqref{proof3}) and (\eqref{proof3i}) gives,
	\begin{align*}
		I(m+1)+\frac{2m+1}{2m+3}I(m) &=\frac{2\sqrt{2}}{2m+3}-\frac{1}{2m+3}\int_0^1\frac{t^{2m+2}(1+t^2)}{\sqrt{1+t^2}}\,\mathrm{d}t\\
		&= \frac{2\sqrt{2}}{2m+3}-\frac{1}{2m+3}\underbrace{\int_0^1 t^{2m+2}\sqrt{1+t^2}\,\mathrm{d}t}_{I(m+1)}
	\end{align*}
	Hence we deduce,
	\begin{equation}
		\label{proof3ii}
		I(m+1)=\frac{\sqrt{2}}{m+2}-\frac{m+\frac{1}{2}}{m+2}I(m), \quad m\geq 0
	\end{equation}
	For example $I(0)=\int_{0}^1\sqrt{1+t^2}\,\mathrm{d}t$, thus;
	$$I(0)=\int_{0}^1\sqrt{1+t^2}\,\mathrm{d}t=\frac{1}{2}(\sqrt{2}+\sinh^{-1}(1))=\frac{\sqrt{2}}{2}+\frac{\ln(1+\sqrt{2})}{2}.$$
	using $\sinh^{-1}(t)=\ln(t+\sqrt{1+t^2}).$
	\par With the relation (\eqref{proof3ii}) connecting $I(m+1)$ to $I(m)$ and the expression of $I(0)$, we find a closed form using Lemma 2.0.8,
	$$I(m)=\frac{(-1)^m}{4^m} \frac{\binom{2m}{m}}{m+1}\cdot \left(\frac{\sqrt{2}}{2}+\frac{\ln(1+\sqrt{2})}{2}+\sqrt{2}\sum_{k=1}^m\frac{(-1)^k4^k}{\binom{2k}{k}}\right).$$	
\end{proof}
\par Now, with the closed forms of $\mathcal{B}(k)$ and $\varphi(2k+1)$ for $k\in \mathbb{Z}_{\geq 0}$ obtained in this section, we revise Theorem 5, Theorem 6 and Theorem 9 in Bhandari's paper \cite{1}, as follows;
\begin{Theorem}\label{theorem2.1.4}
	For $r\in \mathbb{Z}_{\geq 1}$,
	\begin{align*}
		\sum_{n=1}^{\infty}\frac{n4^n}{(2n-1)^2(4n+2r-1)}\frac{\binom{2n}{n}}{\binom{4n+2r-2}{2n+r-1}}=\sqrt{2}\sum_{k=0}^{r}\frac{(-1)^k}{(2k+1)2^{2r-k-1}}\binom{r}{k}\mathcal{B}(k).
	\end{align*}	
\end{Theorem}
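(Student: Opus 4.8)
The plan is to connect the series on the left to the integral family $\mathcal{B}(k)$ by recognizing the summand as coming from an integral representation of the ratio $\binom{2n}{n}\binom{4n+2r-2}{2n+r-1}^{-1}$. The natural starting point is the Beta-function / Wallis integral expression for reciprocals of central binomial coefficients recalled in Lemma \ref{lemma2.0.7}. First I would write the reciprocal central binomial coefficient $\binom{4n+2r-2}{2n+r-1}^{-1}$ as a (suitably normalized) integral of the form $\int_0^{\pi/2}\sin^{2m+1}x\,\mathrm{d}x$ or, after the substitution $t=\text{(some trigonometric quantity)}$, as an integral over $[0,1/2]$ against the weight $(1-t)^{-1/2}$. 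This is precisely the shape that produces $\mathcal{B}(k)$, so the weight $t^k/\sqrt{1-t}$ should emerge once the exponent $2n+r-1$ is expanded.

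Next I would interchange summation and integration, justified by absolute convergence on the relevant interval (the factors $4^n/\binom{4n+2r-2}{2n+r-1}$ decay geometrically, so dominated convergence applies). After swapping, the inner sum over $n$ of the remaining rational-in-$n$ coefficient $\dfrac{n}{(2n-1)^2(4n+2r-1)}$ times a power series in the integration variable must be summed in closed form; I expect this to reduce to an elementary closed expression (a combination of $\sqrt{1-t}$-type terms), using partial fractions in $n$ together with one of the generating-function identities from Section \ref{prelim}. The factor $(2n-1)^2$ in the denominator signals that a generating function analogous to \eqref{buhari} — which carries exactly a $(2n-1)^2(2n+1)$ denominator — is the right tool, so I would aim to massage the inner sum into that template.

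The final and most delicate step is to recognize the resulting closed-form integrand as a finite $\mathbb{Z}_{\geq 0}$-combination of powers $t^k$, and then integrate term by term over $[0,1/2]$ to produce the sum $\sum_{k=0}^r\binom{r}{k}\mathcal{B}(k)$ with the stated coefficients $\dfrac{(-1)^k}{(2k+1)2^{2r-k-1}}\sqrt{2}$. The binomial coefficient $\binom{r}{k}$ strongly suggests a binomial expansion of a factor like $(1-t)^{r}$ or $(2-t)^{r}$ arising from the $r$-shift in the index $2n+r-1$; matching the power of $2$ and the $(2k+1)^{-1}$ weight will pin down the exact algebraic rearrangement. I expect the main obstacle to be precisely this bookkeeping: correctly tracking the $r$-dependent normalization constants (the powers of $2$ and the $\sqrt{2}$) through the integral representation and the term-by-term integration, since an error of a single factor of $2$ or sign propagates into the whole closed form. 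Once the integrand is shown to equal the appropriate binomial combination of $t^k$, invoking Theorem \ref{theorem3.0.1} to evaluate each $\mathcal{B}(k)$ (if a fully explicit right-hand side were desired) completes the identification, though here the statement is content to leave the answer expressed in terms of $\mathcal{B}(k)$.
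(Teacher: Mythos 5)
Your outline points in a plausible direction, but it is not a proof: every decisive step is deferred (``I expect this to reduce to\dots'', ``I would aim to massage\dots''), and the one concrete tool you name does not fit. After applying the odd Wallis formula from Lemma \ref{lemma2.0.7} to $\binom{4n+2r-2}{2n+r-1}^{-1}$, the factor $(4n+2r-1)$ cancels and the inner sum you must evaluate in closed form is $\sum_{n\ge 1}\binom{2n}{n}\frac{n\,z^{2n}}{4^n(2n-1)^2}$ with $z=\sin^2 x$ --- note there is \emph{no} $(2n+1)$ in the denominator, so the template \eqref{buhari} is the wrong generating function; you would need the cruder one built from $\sum_{n\ge 0}\binom{2n}{n}\frac{z^n}{(2n-1)4^n}=-\sqrt{1-z}$ by the usual multiply-and-integrate/differentiate manipulations. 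More importantly, you never account for the three features of the right-hand side that carry all the content: the integration range $[0,1/2]$ in $\mathcal{B}(k)$, the overall $\sqrt{2}$, and the binomial $\binom{r}{k}$. After the Wallis step the integrand involves $\sqrt{1-\sin^4x}=\cos x\sqrt{1+\sin^2x}$ and related radicals, and it takes a specific, non-obvious change of variable to land on $\int_0^{1/2}t^k(1-t)^{-1/2}\,\mathrm{d}t$; asserting that the integrand ``must'' become a binomial combination of powers $t^k$ is exactly the part that needs to be proved, and an error of a factor of $2$ here (which you yourself flag as the main risk) would be undetectable in your write-up because no coefficient is ever computed.

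For comparison, the paper does not reprove the identity at all: its proof is the single line ``Consider \cite[Thm.\ 5]{1}, put $m=2r-1$,'' i.e., the statement is Bhandari's Theorem 5 reindexed in the odd parameter $m=2r-1$, with the $\mathcal{B}(k)$ left symbolic (to be evaluated via Theorem \ref{theorem3.0.1}). So the intended argument is a specialization of a cited result, not a derivation from scratch. If you want a self-contained proof along your lines, the honest route is to reproduce Bhandari's computation: Wallis representation, interchange of sum and integral (your dominated-convergence remark is fine), the correct $(2n-1)^2$-generating function, and then the explicit substitution that produces $t^k/\sqrt{1-t}$ on $[0,1/2]$ together with the binomial expansion of the $r$-dependent prefactor --- all of which is currently missing.
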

\begin{proof}
	Consider \cite[Thm.\ 5]{1}, put $m=2r-1$.
\end{proof}
\begin{Theorem}\label{theorem2.1.5}
	For $r\in \mathbb{Z}_{\geq 1}$, 
	\begin{align*}
		&\sum_{n=1}^{\infty}\frac{n4^n}{(2n-1)^2(2n+1)(4n+2r+1)}\frac{\binom{2n}{n}}{\binom{4n+2r}{2n+r}}=\frac{\sqrt{2}}{2}\sum_{k=0}^{r+1}\frac{(-1)^k}{(2k+1)2^{2r-k+1}}\binom{r+1}{k}\mathcal{B}(k)\\
		&+\frac{\varphi(2r+1)}{2^{2r+3}}-\frac{1}{2^{3/2}}\sum_{k=0}^{r-1}\frac{(-1)^k}{(2k+1)2^{2r-k+1}}\binom{r-1}{k}\mathcal{B}(k).		
	\end{align*}
\end{Theorem}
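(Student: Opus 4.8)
The plan is to mirror the short argument used for Theorem~\ref{theorem2.1.4}: the quickest route is to recognize the left-hand side as the specialization of the companion master theorem in Bhandari~\cite{1} (Theorem~6, the even-index analogue of the Theorem~5 invoked above, carrying the extra $(2n+1)$ factor in the denominator) obtained by setting the free parameter to $m=2r$, and then to substitute the now-explicit evaluations of $\mathcal{B}(k)$ and $\varphi(2k+1)$ furnished by Theorems~\ref{theorem3.0.1} and \ref{theorem3.0.2}. Below I outline the self-contained derivation that this specialization encodes, so that the provenance of the three terms on the right is transparent.

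First I would replace the reciprocal central binomial coefficient by a Wallis integral. Writing $\binom{4n+2r}{2n+r}=\binom{2N}{N}$ with $N=2n+r$, the second formula of Lemma~\ref{lemma2.0.7} gives $\frac{1}{(4n+2r+1)\binom{4n+2r}{2n+r}}=\frac{1}{4^{2n+r}}\int_0^{\pi/2}\sin^{4n+2r+1}\theta\,\mathrm{d}\theta$. Substituting this, interchanging summation and integration (justified since the terms are $O(n^{-5/2})$, so the inner series converges uniformly on $[0,\pi/2]$), and factoring out $\sin^{2r+1}\theta/4^{r}$ reduces the series to $\frac{1}{4^{r}}\int_0^{\pi/2}\sin^{2r+1}\theta\,S(\sin^2\theta)\,\mathrm{d}\theta$, where $S(u)=\sum_{n\ge 1}\frac{n\binom{2n}{n}}{4^n(2n-1)^2(2n+1)}u^{2n}$ is exactly the generating function evaluated in \eqref{buhari}. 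Using $\sqrt{1-\sin^4\theta}=\cos\theta\sqrt{1+\sin^2\theta}$, the integral splits into three pieces. The piece arising from $\sqrt{1-u^2}$ becomes, after $s=\sin\theta$, precisely $\frac{1}{8\cdot 4^r}\int_0^1 s^{2r+1}\sqrt{1+s^2}\,\mathrm{d}s=\frac{\varphi(2r+1)}{2^{2r+3}}$, which is the middle term of the claim.

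The two surviving pieces carry the factor $\arcsin(\sin^2\theta)$, and this is where the real work lies. The plan is to integrate each by parts, differentiating $\arcsin(\sin^2\theta)$ (whose derivative simplifies to $\frac{2\sin\theta}{\sqrt{1+\sin^2\theta}}$) and integrating the odd power of $\sin\theta$; writing the antiderivative of $\sin^{2r+3}\theta$, respectively $\sin^{2r-1}\theta$, as a polynomial in $\cos\theta$ through $(1-\cos^2\theta)^{r+1}$, respectively $(1-\cos^2\theta)^{r-1}$, is what produces the binomial coefficients $\binom{r+1}{k}$ and $\binom{r-1}{k}$ of the statement. Each resulting integral has the shape $\frac{\sqrt{2}}{2^{k}}\int_0^{\pi/2}\frac{\sin\theta\,\cos^{2k+1}\theta}{\sqrt{1+\sin^2\theta}}\,\mathrm{d}\theta$, which the substitution $t=\tfrac12\cos^2\theta$ identifies with $\mathcal{B}(k)$. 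I expect the main obstacle to be the bookkeeping in these two integrations by parts: keeping the powers of $2$ and the index shifts ($r+1$ versus $r-1$) exactly aligned with the statement, and, most delicately, verifying that the boundary contributions at $\theta=\pi/2$, which each carry a factor $\tfrac{\pi}{2}\arcsin 1$ times a Wallis integral, cancel between the two pieces so that no transcendental $\pi$-term survives in the elementary closed form. Once that cancellation is confirmed, inserting the closed forms of $\mathcal{B}(k)$ and $\varphi(2r+1)$ and comparing against the CAS output completes the verification.
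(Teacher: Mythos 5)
Your proposal takes the same route as the paper, whose entire proof is ``Consider \cite[Thm.\ 6]{1}, put $m=2r+1$ and using Theorem \ref{theorem2.1.4}'' followed by substituting the closed forms of $\mathcal{B}(k)$ and $\varphi(2k+1)$ — the only bookkeeping slip is that the parameter is $m=2r+1$, not $m=2r$. Your self-contained unpacking of that specialization (Wallis integral, the generating function \eqref{buhari} at $x=\sin^2\theta$, integration by parts on the two $\arcsin(\sin^2\theta)$ pieces, and the substitution $t=\tfrac{1}{2}\cos^2\theta$ yielding $\mathcal{B}(k)$) is correct and reproduces all three terms with the right constants; note only that with the polynomial-in-$\cos\theta$ antiderivative the boundary terms vanish individually at both endpoints, so no cancellation between the two pieces is actually required.
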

\begin{proof}
	Consider \cite[Thm.\ 6]{1}, put $m=2r+1$ and using Theorem \ref{theorem2.1.4}
\end{proof}
\begin{ex}
	\begin{align*}
		&\sum_{n=1}^{\infty}\frac{n4^n}{(2n-1)^2(2n+1)(4n+3)}\frac{\binom{2n}{n}}{\binom{4n+2}{2n+1}}=\frac{2+2\sqrt{2}}{225}.\\
		&\sum_{n=1}^{\infty}\frac{n4^n}{(2n-1)^2(2n+1)(4n+5)}\frac{\binom{2n}{n}}{\binom{4n+4}{2n+2}}=\frac{137\sqrt{2}-88}{22050}.
	\end{align*}
\end{ex}
\begin{Theorem}\label{theorem2.1.6}
	For $r\in \mathbb{Z}_{\geq 1}$,
	\begin{align*}
		\sum_{n=1}^{\infty}\frac{n4^n}{(4n^2-1)(4n+2r+1)}\frac{\binom{2n}{n}}{\binom{4n+2r}{2n+r}}=\frac{1}{\sqrt{2}}\sum_{k=0}^{r-1}\frac{(-1)^k}{(2k+1)2^{2r-k+1}}\binom{r-1}{k}\mathcal{B}(k)-\frac{\varphi(2r+1)}{2^{2r+2}}.
	\end{align*}
\end{Theorem}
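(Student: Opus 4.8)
The cleanest route mirrors the derivations already used in \cite{1}: one could simply invoke \cite[Thm.\ 9]{1} with $m=2r+1$, since the binomial $\binom{4n+2r}{2n+r}$ together with the factor $(4n+2r+1)$ is precisely the $m=2r+1$ specialization, and then replace the two integrals appearing there by the closed forms of $\mathcal{B}(k)$ and $\varphi(2k+1)$ from Theorems \ref{theorem3.0.1} and \ref{theorem3.0.2}. For completeness I would instead give a self-contained argument through an integral representation. The starting point is the second Wallis formula of Lemma \ref{lemma2.0.7} applied with $n\mapsto 2n+r$, which gives
\begin{equation*}
	\frac{1}{(4n+2r+1)\binom{4n+2r}{2n+r}}=\frac{1}{4^{2n+r}}\int_0^{\pi/2}\sin^{4n+2r+1}x\,\mathrm{d}x.
\end{equation*}
Substituting this into the left-hand side, interchanging summation and integration (legitimate since every term is nonnegative on $[0,\pi/2]$), and factoring out $\sin^{2r+1}x$ and $4^{-r}$, the problem reduces to evaluating
\begin{equation*}
	\frac{1}{4^{r+1}}\int_0^{\pi/2}\sin^{2r+1}x\left(\frac{\arcsin(\sin^2 x)}{\sin^2 x}-\sqrt{1-\sin^4 x}\right)\mathrm{d}x.
\end{equation*}

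To obtain the bracketed factor I would close the inner series $\sum_{n\ge1}\frac{n\binom{2n}{n}}{4^n(4n^2-1)}\sin^{4n}x$. Writing $4n^2-1=(2n-1)(2n+1)$ and using the partial fraction $\frac{n}{(2n-1)(2n+1)}=\frac14\bigl(\frac{1}{2n-1}+\frac{1}{2n+1}\bigr)$ together with the elementary generating functions $\sum_{n\ge1}\frac{\binom{2n}{n}}{4^n(2n-1)}y^{2n}=1-\sqrt{1-y^2}$ and $\sum_{n\ge1}\frac{\binom{2n}{n}}{4^n(2n+1)}y^{2n}=\frac{\arcsin y}{y}-1$ (both immediate from integrating $\frac{1}{\sqrt{1-y^2}}=\sum_{n\ge0}\binom{2n}{n}y^{2n}/4^n$), evaluated at $y=\sin^2 x$, the inner sum collapses to $\frac14\bigl(\frac{\arcsin(\sin^2 x)}{\sin^2 x}-\sqrt{1-\sin^4 x}\bigr)$. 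This splits the integral into two pieces. In the piece carrying $\sqrt{1-\sin^4 x}=\cos x\sqrt{1+\sin^2 x}$ the substitution $t=\sin x$ turns it into $\int_0^1 t^{2r+1}\sqrt{1+t^2}\,\mathrm{d}t=\varphi(2r+1)$, producing the term $-\varphi(2r+1)/2^{2r+2}$ once the prefactor $4^{-(r+1)}=2^{-(2r+2)}$ is accounted for.

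The heart of the matter, and the step I expect to be the main obstacle, is the remaining integral $\int_0^{\pi/2}\sin^{2r-1}x\,\arcsin(\sin^2 x)\,\mathrm{d}x$, since a naive integration by parts introduces an unwanted boundary term carrying $\pi$. The device that avoids this is to write $\arcsin(\sin^2 x)=\int_0^{\sin^2 x}\frac{\mathrm{d}s}{\sqrt{1-s^2}}$ and interchange the order of integration: for fixed $s\in[0,1]$ the variable $x$ ranges over $[\arcsin\sqrt s,\pi/2]$, and the substitution $c=\cos x$ reduces the inner integral to $\int_0^{\sqrt{1-s}}(1-c^2)^{r-1}\,\mathrm{d}c$. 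Expanding $(1-c^2)^{r-1}$ by the binomial theorem and integrating termwise yields $\sum_{k=0}^{r-1}\binom{r-1}{k}\frac{(-1)^k}{2k+1}(1-s)^{k+1/2}$, whence
\begin{equation*}
	\int_0^{\pi/2}\sin^{2r-1}x\,\arcsin(\sin^2 x)\,\mathrm{d}x=\sum_{k=0}^{r-1}\binom{r-1}{k}\frac{(-1)^k}{2k+1}\int_0^1\frac{(1-s)^k}{\sqrt{1+s}}\,\mathrm{d}s,
\end{equation*}
using $\frac{(1-s)^{k+1/2}}{\sqrt{1-s^2}}=\frac{(1-s)^k}{\sqrt{1+s}}$.

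The final link to $\mathcal{B}(k)$ is the substitution $s=1-2t$, which carries $[0,1]$ onto $[0,1/2]$ and gives $\int_0^1\frac{(1-s)^k}{\sqrt{1+s}}\,\mathrm{d}s=\sqrt2\,2^k\,\mathcal{B}(k)$. Collecting the two contributions and simplifying the powers of $2$ via $\frac{\sqrt2\,2^k}{4^{r+1}}=\frac{1}{\sqrt2\,2^{2r-k+1}}$ reproduces exactly the claimed right-hand side $\frac{1}{\sqrt2}\sum_{k=0}^{r-1}\frac{(-1)^k}{(2k+1)2^{2r-k+1}}\binom{r-1}{k}\mathcal{B}(k)-\frac{\varphi(2r+1)}{2^{2r+2}}$. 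The only genuinely delicate points are the termwise interchange of sum and integral and the careful bookkeeping of the powers of $2$; everything else is a chain of elementary substitutions.
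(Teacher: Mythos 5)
Your opening sentence is precisely the paper's entire proof: the authors write only ``Consider \cite[Thm.\ 9]{1}, put $m=2r+1$'' and then substitute the closed forms of $\mathcal{B}(k)$ and $\varphi(2k+1)$ from Theorems \ref{theorem3.0.1} and \ref{theorem3.0.2}. So on that level the proposal matches the paper exactly. The self-contained argument you give in addition is genuinely different in character and, as far as I can check, correct in every step: the Wallis representation with $n\mapsto 2n+r$, the partial fraction $\frac{n}{4n^2-1}=\frac14\bigl(\frac{1}{2n-1}+\frac{1}{2n+1}\bigr)$ combined with the generating functions $1-\sqrt{1-y^2}$ and $\frac{\arcsin y}{y}-1$, the identification $\sqrt{1-\sin^4x}=\cos x\sqrt{1+\sin^2x}$ leading to $\varphi(2r+1)/2^{2r+2}$, the Fubini/Tonelli trick replacing $\arcsin(\sin^2x)$ by $\int_0^{\sin^2x}(1-s^2)^{-1/2}\,\mathrm{d}s$ to avoid the boundary term, and the final substitution $s=1-2t$ giving $\int_0^1\frac{(1-s)^k}{\sqrt{1+s}}\,\mathrm{d}s=\sqrt{2}\,2^k\mathcal{B}(k)$ with $\frac{\sqrt{2}\,2^k}{4^{r+1}}=\frac{1}{\sqrt{2}\,2^{2r-k+1}}$ all check out. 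What your version buys is independence from \cite{1}: the paper's proof is opaque to a reader who does not have Bhandari's Theorem 9 at hand, whereas yours rederives the identity from Wallis' integrals and elementary generating functions, and in doing so also makes visible exactly where $\mathcal{B}(k)$ and $\varphi(2k+1)$ enter. The cost is length; the paper's citation is shorter but defers all the analytic content to the earlier reference.
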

\begin{proof}
	Consider \cite[Thm.\ 9]{1}, put $m=2r+1$.
\end{proof}
\begin{ex}
	\begin{align*}
		&\sum_{n=1}^{\infty}\frac{n4^n}{(4n^2-1)(4n+3)}\frac{\binom{2n}{n}}{\binom{4n+2}{2n+1}}=\frac{7\sqrt{2}-8}{60}.\\
		&\sum_{n=1}^{\infty}\frac{n4^n}{(4n^2-1)(4n+5)}\frac{\binom{2n}{n}}{\binom{4n+4}{2n+2}}=\frac{71\sqrt{2}-64}{5040}.		
	\end{align*}
\end{ex}
Using Lemmas [\ref{lemma2.0.1}, \ref{lemma2.0.2}, \ref{lemma2.0.4}, \ref{lemma2.0.6}] and following the same approach as in \cite{1} we can deduce interesting similar results in terms of $\mathcal{B}(k)$ and $\varphi(2k+1)$ like Theorems \ref{theorem2.1.4} - \ref{theorem2.1.6}. For brevity details are left to the interested readers.

\section{Reviewing two theorems on series involving \texorpdfstring{$\binom{4n}{2n}\binom{2n}{n}^{-1}$}{binomial(2n,n)} arising from \texorpdfstring{\cite{2}}{2}}
\par In the last section of \cite{2}, the authors discussed some combinatorial identities for finite sums and interesting infinite series involving central binomial coefficients. Among them, our focus here are on those containing ratio of central binomial coefficients. \cite[Thm.\ 9, Thm.\ 10, Thm.\ 11]{2}.  Only Theorem 9 was expressed in explicit closed-form.
\par Theorem 10 and Theorem 11 are left in terms of two trigonometric integrals.
Recall that these two theorems \cite[Thm.\ 10, Thm.\ 11]{2} are based on the following two generating functions:
\begin{Lemma}[\cite{2}, p.\ 2, p.\ 4]
	For all $z\neq 0$ and $|z|\leq 1$, 
	\begin{equation}
		\label{7}
		G_1(z):=\sum_{n=1}^{\infty}\frac{C_{2n-1}}{4^{2n-1}}z^{2n-1}=\frac{2}{z}-\frac{\sqrt{1+z}+\sqrt{1-z}}{z}.
	\end{equation}
	Putting $z=\sin y$, we have the trigonometric version 
	\begin{equation}
		G_{1t}(y):=\sum_{n=1}^{\infty}C_{2n-1}\frac{\sin^{2n-1}y}{4^{2n-1}}=\frac{4\sin^2\left(\frac{y}{4}\right)}{\sin y},\qquad |y|\leq \frac{\pi}{2}
	\end{equation}
\end{Lemma}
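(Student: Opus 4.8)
The plan is to recognize that $C_m=\frac{1}{m+1}\binom{2m}{m}$ are the Catalan numbers, whose ordinary generating function is the classical
\[
c(x)=\sum_{m=0}^{\infty}C_mx^m=\frac{1-\sqrt{1-4x}}{2x},\qquad |x|\leq\tfrac14 .
\]
The series defining $G_1(z)$ collects only the odd-indexed Catalan numbers, so after writing $w=z/4$ (so that $\frac{C_{2n-1}}{4^{2n-1}}z^{2n-1}=C_{2n-1}w^{2n-1}$), the function $G_1$ is exactly the odd part of $c$ evaluated at $w$. The trigonometric identity will then fall out of the first formula by an elementary half-angle substitution. Before manipulating, I would note that from the asymptotics $C_m\sim 4^m/(\sqrt{\pi}\,m^{3/2})$ the series $\sum C_m w^m$ converges absolutely for $|w|\leq\frac14$, i.e. for $|z|\leq 1$, so every step below is legitimate; at the boundary $|z|=1$ the passage to the closed form is justified by Abel's theorem together with the continuity of $c$ at $w=\pm\frac14$.

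First I would extract the odd part. Since $c(w)=\sum_{m\geq0}C_mw^m$, the odd part is
\[
\frac{c(w)-c(-w)}{2}=\sum_{n=1}^{\infty}C_{2n-1}w^{2n-1}=G_1(z).
\]
Using $c(-w)=\frac{1-\sqrt{1+4w}}{-2w}=\frac{\sqrt{1+4w}-1}{2w}$ and subtracting, the radicals combine to give
\[
G_1(z)=\frac{1}{4w}\left(2-\sqrt{1-4w}-\sqrt{1+4w}\right).
\]
With $4w=z$ this is precisely $\frac{2}{z}-\frac{\sqrt{1+z}+\sqrt{1-z}}{z}$, the asserted closed form for $G_1$.

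For the trigonometric version I would set $z=\sin y$ with $|y|\leq\frac{\pi}{2}$ and reduce the claim to the identity $2-\sqrt{1+\sin y}-\sqrt{1-\sin y}=4\sin^2\frac{y}{4}$. The key algebraic observation is $1\pm\sin y=(\cos\frac{y}{2}\pm\sin\frac{y}{2})^2$. On the range $|y|\leq\frac{\pi}{2}$ one has $\frac{y}{2}\in[-\frac{\pi}{4},\frac{\pi}{4}]$, so $\cos\frac{y}{2}>0$ and $\cos\frac{y}{2}\geq|\sin\frac{y}{2}|$; hence both bases are nonnegative and
\[
\sqrt{1+\sin y}+\sqrt{1-\sin y}=\left(\cos\tfrac{y}{2}+\sin\tfrac{y}{2}\right)+\left(\cos\tfrac{y}{2}-\sin\tfrac{y}{2}\right)=2\cos\tfrac{y}{2}.
\]
Then $2-2\cos\frac{y}{2}=4\sin^2\frac{y}{4}$ by the half-angle formula, and dividing by $\sin y$ yields $G_{1t}(y)=\frac{4\sin^2(y/4)}{\sin y}$.

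The generating-function arithmetic and the half-angle reductions are routine; the step requiring genuine care is the sign discussion when passing from $(\cos\frac{y}{2}\pm\sin\frac{y}{2})^2$ back to a square root. It is exactly the hypothesis $|y|\leq\frac{\pi}{2}$ that forces $\cos\frac{y}{2}\geq|\sin\frac{y}{2}|$, so that no absolute-value corrections intervene and the two radicals add cleanly to $2\cos\frac{y}{2}$; without this restriction the simplification would fail. A secondary subtlety is convergence at the boundary $|z|=1$ (equivalently $y=\pm\frac{\pi}{2}$), which is precisely what the $C_m$ asymptotics and Abel's theorem dispose of.
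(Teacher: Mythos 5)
Your proof is correct and complete. Note, however, that the paper offers no proof of this Lemma at all --- it is imported verbatim from Adegoke et al.\ \cite{2} as a known generating function --- so there is no in-paper argument to compare against. Your route (extracting the odd part $\tfrac{1}{2}(c(w)-c(-w))$ of the Catalan generating function $c(w)=\frac{1-\sqrt{1-4w}}{2w}$ with $w=z/4$, then using $1\pm\sin y=(\cos\tfrac{y}{2}\pm\sin\tfrac{y}{2})^2$ and the half-angle identity $2-2\cos\tfrac{y}{2}=4\sin^2\tfrac{y}{4}$) is a standard and fully rigorous derivation; you are also right that the only delicate points are the sign analysis forced by $|y|\leq\tfrac{\pi}{2}$ and the boundary case $|z|=1$, which your appeal to the $C_m\sim 4^m/(\sqrt{\pi}\,m^{3/2})$ asymptotics and Abel's theorem handles correctly.
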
	
\begin{Lemma}[\cite{2}, p.\ 2, p.\ 4]
	For all $z\neq 0$ and $|z|\leq 1$, 
	\begin{equation}
		\label{8}
		G_2(z):=	\sum_{n=1}^{\infty}\frac{C_{2n}}{4^{2n}}z^{2n}=\frac{\sqrt{1+z}-\sqrt{1-z}}{z}.
	\end{equation}
	This also appears as a Lemma in \cite{1}. Putting $z=\sin y$, yields the trigonometric version namely:
	\begin{equation}
		G_{2t}(y):=\sum_{n=1}^{\infty}C_{2n}\frac{\sin^{2n}y}{4^{2n}}=\frac{1}{\cos \left(\frac{y}{2}\right)},\qquad |y|\leq \frac{\pi}{2}
	\end{equation}
\end{Lemma}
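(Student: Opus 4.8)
The plan is to recognize that $C_n$ must denote the Catalan numbers $C_n=\frac{1}{n+1}\binom{2n}{n}$ (this is forced by the companion identity for $G_1$), so that $G_2$ is exactly the even-indexed part of the single power series $F(z):=\sum_{n=0}^{\infty}\frac{C_n}{4^n}z^n$. First I would record the full generating function: starting from the standard Catalan identity $\sum_{n=0}^{\infty}C_n x^n=\frac{1-\sqrt{1-4x}}{2x}$ (itself derivable from the central binomial generating function $\frac{1}{\sqrt{1-4x}}$ recalled in the Introduction) and substituting $x=z/4$, one obtains
$$F(z)=\sum_{n=0}^{\infty}\frac{C_n}{4^n}z^n=\frac{2\left(1-\sqrt{1-z}\right)}{z},\qquad 0<|z|\le 1.$$
The hypothesis $|z|\le 1$ is precisely what guarantees absolute convergence, since $C_n/4^n\sim(\sqrt{\pi}\,n^{3/2})^{-1}$, so all termwise manipulations are legitimate up to the boundary.

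Next I would isolate the even part by the standard bisection trick. Replacing $z$ by $-z$ gives $F(-z)=\frac{2(\sqrt{1+z}-1)}{z}$, and averaging annihilates the odd powers:
$$\frac{F(z)+F(-z)}{2}=\sum_{n=0}^{\infty}\frac{C_{2n}}{4^{2n}}z^{2n}=\frac{\sqrt{1+z}-\sqrt{1-z}}{z}.$$
This is the asserted closed form, the $n=0$ Catalan term $C_0=1$ being exactly the value $\lim_{z\to 0}\frac{\sqrt{1+z}-\sqrt{1-z}}{z}$ of the right-hand side at the origin. As an independent cross-check, since the preceding lemma already supplies the odd part $G_1(z)=\frac{2}{z}-\frac{\sqrt{1+z}+\sqrt{1-z}}{z}$, one can instead compute $F(z)-G_1(z)$ and recover the very same expression; it is worth recording this consistency since it simultaneously validates both lemmas.

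Finally, for the trigonometric version I would set $z=\sin y$ with $|y|\le\frac{\pi}{2}$ and simplify the two radicals through the half-angle identities $1\pm\sin y=\bigl(\cos\tfrac{y}{2}\pm\sin\tfrac{y}{2}\bigr)^{2}$. The only point demanding genuine care — and the main (mild) obstacle — is the choice of sign when extracting the square roots: on $|y|\le\frac{\pi}{2}$ one has $\cos\tfrac{y}{2}\ge\bigl|\sin\tfrac{y}{2}\bigr|$, so both $\cos\tfrac{y}{2}+\sin\tfrac{y}{2}\ge 0$ and $\cos\tfrac{y}{2}-\sin\tfrac{y}{2}\ge 0$, yielding $\sqrt{1+\sin y}=\cos\tfrac{y}{2}+\sin\tfrac{y}{2}$ and $\sqrt{1-\sin y}=\cos\tfrac{y}{2}-\sin\tfrac{y}{2}$ throughout the stated range. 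Their difference is $2\sin\tfrac{y}{2}$, while $\sin y=2\sin\tfrac{y}{2}\cos\tfrac{y}{2}$, so dividing gives
$$G_{2t}(y)=\frac{\sqrt{1+\sin y}-\sqrt{1-\sin y}}{\sin y}=\frac{2\sin\frac{y}{2}}{2\sin\frac{y}{2}\cos\frac{y}{2}}=\frac{1}{\cos\frac{y}{2}}.$$
Thus the entire lemma reduces to the Catalan generating function, one series bisection, and a half-angle simplification, with the sign bookkeeping in the last step being the only place needing attention.
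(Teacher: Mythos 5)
Your proof is correct, and it supplies an argument where the paper gives none: the lemma is simply quoted from Adegoke et al.\ \cite{2} (and noted to appear in \cite{1}) without proof, so there is no ``paper's route'' to compare against. Your chain --- Catalan generating function, substitution $x=z/4$, bisection via $\tfrac12\bigl(F(z)+F(-z)\bigr)$, and the half-angle factorization $1\pm\sin y=\bigl(\cos\tfrac{y}{2}\pm\sin\tfrac{y}{2}\bigr)^2$ with the sign check $\cos\tfrac{y}{2}\ge\bigl|\sin\tfrac{y}{2}\bigr|$ on $|y|\le\tfrac{\pi}{2}$ --- is the natural and standard derivation, and your remark on absolute convergence at $|z|=1$ from $C_n/4^n\sim(\sqrt{\pi}\,n^{3/2})^{-1}$ correctly justifies the boundary case. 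One point worth flagging explicitly: what you prove is $\sum_{n=0}^{\infty}\frac{C_{2n}}{4^{2n}}z^{2n}=\frac{\sqrt{1+z}-\sqrt{1-z}}{z}$, whereas the lemma as printed starts the sum at $n=1$; since the right-hand side tends to $1=C_0$ as $z\to 0$, the $n=0$ version is the correct one (and is the form the paper actually uses later, where $G_{2t}$ is written as a sum from $n=0$), so the printed lower index is a typo that your derivation implicitly corrects. Your cross-check against the odd-part lemma for $G_1$ is a nice bonus, since the full series $F(z)=\frac{2(1-\sqrt{1-z})}{z}$ does split exactly into the two stated pieces.
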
	
We now state a key lemma that will be of help for simplification.
\begin{Lemma}\label{lemma2.2.3}
	For all positive integers $p,n$, we have that 
	\begin{equation}
		\prod_{j=1}^p \left(\frac{j}{p}\right)_n = \frac{(pn)!}{p^{pn}}.
	\end{equation}
	where $(x)_n$ is the Pochammer symbol defined by $(x)_n=x(x+1)(x+2)\cdots (x+n-1)$, with $(x)_0=1$.
\end{Lemma}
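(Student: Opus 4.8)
The plan is to unfold each rising-factorial factor and then recognize a telescoping reindexing. Since $(x)_n = x(x+1)\cdots(x+n-1)$, setting $x=j/p$ gives
$$\left(\frac{j}{p}\right)_n = \prod_{i=0}^{n-1}\frac{j+ip}{p} = \frac{1}{p^n}\prod_{i=0}^{n-1}(j+ip).$$
Taking the product over $j=1,\dots,p$ pulls out a factor $p^{-n}$ from each of the $p$ terms, producing the $p^{-pn}$ in the denominator, so it remains only to show that
$$\prod_{j=1}^p\prod_{i=0}^{n-1}(j+ip)=(pn)!.$$

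The key step is to observe that the integers $j+ip$, as $(i,j)$ ranges over $0\le i\le n-1$ and $1\le j\le p$, enumerate each of $1,2,\dots,pn$ exactly once. Indeed, for a fixed $i$ the values $j+ip$ with $j=1,\dots,p$ are precisely the consecutive integers $ip+1,\,ip+2,\,\dots,\,(i+1)p$; letting $i$ run from $0$ to $n-1$ then tiles $\{1,\dots,pn\}$ into $n$ disjoint consecutive blocks of length $p$. This is just the Euclidean division statement that every $m\in\{1,\dots,pn\}$ has a unique representation $m=ip+j$ with $0\le i\le n-1$ and $1\le j\le p$. Hence the double product equals $\prod_{m=1}^{pn} m = (pn)!$, and combining with the previous display yields the claim.

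I do not anticipate a genuine obstacle here; the only point needing care is verifying that the reindexing $(i,j)\mapsto ip+j$ is a \emph{bijection} onto $\{1,\dots,pn\}$ rather than merely a surjection, which the uniqueness half of the division algorithm guarantees. An alternative, should one prefer to avoid the combinatorial enumeration, is a short induction on $n$: the base case $n=1$ reads $\prod_{j=1}^p (j/p) = p!/p^p$, and the inductive step multiplies the $n$-th product by $\prod_{j=1}^p\left(\frac{j}{p}+n\right)=\frac{1}{p^p}\prod_{j=1}^p(j+np)=\frac{1}{p^p}\cdot\frac{(p(n+1))!}{(pn)!}$, whose numerator supplies exactly the new length-$p$ block $(np+1)\cdots(np+p)$. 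Either route is routine; I would present the direct enumeration since it makes the factorial appear transparently.
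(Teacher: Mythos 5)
Your proposal is correct and follows essentially the same route as the paper: expand each $\bigl(\tfrac{j}{p}\bigr)_n$ as $p^{-n}\prod_{i=0}^{n-1}(j+ip)$, extract $p^{-pn}$, and identify the double product with $(pn)!$. The only difference is that you spell out the bijection $(i,j)\mapsto ip+j$ via the division algorithm, a step the paper leaves implicit.
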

It's worth noting that this Lemma appears in \cite[p.\ 10]{5}.
\begin{proof}
	Observe,
	\begin{align*}
		&\prod_{j=1}^p\left(\frac{j}{p}\right)_n=\prod_{j=1}^p\left(\frac{j}{p}\right)\left(\frac{j}{p}+1\right)\left(\frac{j}{p}+2\right)\cdots \left(\frac{j}{p}+n-1\right)\\
		&=\prod_{j=1}^p\frac{j(j+p)(j+2p)\cdots(j+p(n-1))}{p^n}=\frac{\prod_{j=1}^p j(j+p)(j+2p)\cdots(j+p(n-1))}{p^{pn}}\\
		&=\frac{(np)!}{p^{pn}}.
	\end{align*}
	
\end{proof}
In particular: 
\begin{align}
	\label{227}
	\frac{1}{n!}\left(\frac{1}{2}\right)_n&=\frac{1}{2^{2n}}\binom{2n}{n}.\\
	\label{228}
	\frac{1}{(n!)^2}\left(\frac{1}{4}\right)_n\left(\frac{3}{4}\right)_n&=\frac{1}{4^{3n}}\binom{4n}{2n}\binom{2n}{n}.
\end{align}

We begin by deriving closed forms for the integrals that appeared in Theorem 10 and 11 from \cite{2}.

\begin{prop}\label{prop2.2.1}
	For $r\in \mathbb{N}$, 
	\begin{equation}
		\int_0^{\frac{\pi}{2}}\sin^{2r}x\sin\frac{x}{2}\,\mathrm{d}x=\frac{16^r}{(4r+1)\binom{4r}{2r}}\left(2+\sqrt{2}\sum_{k=0}^r \frac{\binom{4k}{2k}}{(4k-1)16^k}\right)
	\end{equation}
\end{prop}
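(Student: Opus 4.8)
The plan is to set $J(r):=\int_0^{\pi/2}\sin^{2r}x\sin\frac{x}{2}\,\mathrm{d}x$, establish a first-order linear recurrence in $r$, and then solve it in closed form via Lemma \ref{lemma2.0.8}. First I would record the base case directly: $J(0)=\int_0^{\pi/2}\sin\frac{x}{2}\,\mathrm{d}x=\bigl[-2\cos\frac{x}{2}\bigr]_0^{\pi/2}=2-\sqrt2$, which will serve as the initial value $z_0$.

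The core step is deriving the recurrence. A single integration by parts on $J(r)$ turns out to be circular (it merely reproduces $J(r)=J(r)$), so instead I would first lower the power by writing $\sin^{2r}x=\sin^{2r-2}x(1-\cos^2x)$, giving $J(r)=J(r-1)-L$ with $L=\int_0^{\pi/2}\sin^{2r-2}x\cos^2x\sin\frac{x}{2}\,\mathrm{d}x$. Then two successive integrations by parts on $L$ — the first with $\mathrm{d}v=\sin^{2r-2}x\cos x\,\mathrm{d}x$ and $u=\cos x\sin\frac{x}{2}$, the second with $\mathrm{d}v=\sin^{2r-1}x\cos x\,\mathrm{d}x$ and $u=\cos\frac{x}{2}$ — re-express $L$ in terms of $J(r)$ and the boundary value $\frac{\sqrt2}{4r}$ coming from $\cos\frac{\pi}{4}=\frac{\sqrt2}{2}$. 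Collecting the $J(r)$ terms (with common denominator $8r(2r-1)$, using $16r^2-1=(4r-1)(4r+1)$) yields
$$(16r^2-1)J(r)=8r(2r-1)J(r-1)+\sqrt2.$$

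This is exactly the shape $a_nz_n=b_nz_{n-1}+r_n$ of Lemma \ref{lemma2.0.8}, with $a_n=(4n-1)(4n+1)$, $b_n=8n(2n-1)$, $r_n=\sqrt2$. Applying that lemma, the prefactor is $\frac{b_1\cdots b_n}{a_1\cdots a_n}$. I would compute $\prod_{j=1}^n b_j=8^n n!\prod_{j=1}^n(2j-1)=4^n(2n)!$, and $\prod_{j=1}^n a_j=16^n\,(3/4)_n(5/4)_n$; using $(5/4)_n=(4n+1)(1/4)_n$ together with identity \eqref{228} to convert $(1/4)_n(3/4)_n$ into central binomial coefficients, the prefactor collapses to $\frac{16^n}{(4n+1)\binom{4n}{2n}}$, matching the claimed leading factor. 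For the sum, I would show that the general term simplifies as $\frac{a_1\cdots a_{k-1}}{b_1\cdots b_k}=\frac{\binom{4k}{2k}}{(4k-1)16^k}$, which follows from the ratio $\binom{4k}{2k}=\frac{2(4k-1)(4k-3)}{k(2k-1)}\binom{4k-4}{2k-2}$. Finally, since the $k=0$ value of $\frac{\binom{4k}{2k}}{(4k-1)16^k}$ equals $-1$, the initial term $z_0=2-\sqrt2$ is exactly $2+\sqrt2\cdot(-1)$, so absorbing $z_0$ into the sum as its $k=0$ contribution produces the stated $2+\sqrt2\sum_{k=0}^r$.

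The main obstacle is twofold: choosing the integration-by-parts scheme that produces a genuine order reduction rather than a tautology, and then carrying out the product/Pochhammer bookkeeping so that the telescoped coefficients condense cleanly. The latter algebraic collapse — in particular the emergence of the factor $(4k-1)$ in the denominator and the cancellation turning $(1/4)_n(3/4)_n$ products into $\binom{4n}{2n}$ via \eqref{228} — is where the argument must be handled most carefully.
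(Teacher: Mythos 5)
Your proposal is correct and follows essentially the same route as the paper: both derive the recurrence $(16r^2-1)J(r)=8r(2r-1)J(r-1)+\sqrt{2}$ (the paper via one integration by parts followed by a power reduction, you via a power reduction followed by two integrations by parts), then solve it with Lemma \ref{lemma2.0.8} and collapse the Pochhammer products through identity \eqref{228}, absorbing $J(0)=2-\sqrt{2}$ as the $k=0$ term. The only differences are cosmetic (your $r_n=\sqrt{2}$ versus the paper's normalized $r_n=\sqrt{2}/16$, compensated in the telescoped coefficients).
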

\begin{proof}
	Let $$K(r)=\int_0^{\frac{\pi}{2}}\sin^{2r}x\sin\frac{x}{2}\,\mathrm{d}x, \qquad r\geq 1.$$
	Using Integration by parts, we arrive at;
	$$K(r)=-\sqrt{2}+4r\int_{0}^{\frac{\pi}{2}}\cos\frac{x}{2}\cos x\sin^{2r-1}x\,\mathrm{d}x.$$
	To this end, we have that,
	\begin{align*}
		\int_{0}^{\frac{\pi}{2}}\cos\frac{x}{2}\cos x\sin^{2r-1}x\,\mathrm{d}x&=-2\int_{0}^{\frac{\pi}{2}}\sin\frac{x}{2}[-\sin^{2r}x+(2r-1)(1-\sin^2x)\sin^{2r-2}x]\,\mathrm{d}x\\
		&=-2\int_{0}^{\frac{\pi}{2}}\sin\frac{x}{2}[-2r\sin^{2r}x+(2r-1)\sin^{2r-2}x]\,\mathrm{d}x\\
		&=4rK(r)-2(2r-1)K(r-1).
	\end{align*}
	Thus,
	$$K(r)=-\sqrt{2}+16r^2K(r)-8r(2r-1)K(r-1).$$
	which can be re-written as 
	$$\left(r-\frac{1}{4}\right)\left(r+\frac{1}{4}\right)K(r)=r\left(r-\frac{1}{2}\right)K(r-1)+\frac{\sqrt{2}}{16}.$$
	Using Lemma \ref{lemma2.0.8} with $a_k=\left(k-\frac{1}{4}\right)\left(k+\frac{1}{4}\right)$, $b_k =k\left(k-\frac{1}{2}\right)$ and $r_k=\frac{\sqrt{2}}{16}$ we have that, 
	$$a_1\cdots a_r =\left(\frac{3}{4}\right)_r\left(\frac{5}{4}\right)_r \qquad \text{and} \qquad b_1\cdots b_r = n!\left(\frac{1}{2}\right)_r$$
	
	This implies that, 
	$$\begin{cases}
		a_1\cdots a_r=(4r+1)\left(\frac{1}{4}\right)_r\left(\frac{3}{4}\right)_r,\\
		b_1\cdots b_r = r!\left(\frac{1}{2}\right)_r.
	\end{cases}$$
	Using Lemma \ref{lemma2.2.3} in particular \eqref{228}, we have
	\begin{align*}
		a_1\cdots a_r&=(4r+1)\frac{(r!)^2}{4^{3r}}\binom{4r}{2r}\binom{2r}{r},\\
		b_1\cdots b_r&=\frac{(r!)^2}{4^{r}}\binom{2r}{r}.
	\end{align*}
	Hence,
	$$\frac{b_1b_2\cdots b_r}{a_1a_2\cdots a_r}=\frac{16^r}{(4r+1)\binom{4r}{2r}}$$
	and
	$$\frac{a_1a_2\cdots a_{k-1}}{b_1b_2\cdots b_k}=\frac{a_1a_2\cdots a_{k}}{b_1b_2\cdots b_k}\cdot\frac{1}{a_k}=\frac{\binom{4k}{2k}}{16^{k-1}(4k-1)}.$$
	Finally, the result follows immediately.
\end{proof}
\begin{prop}\label{prop2.2.2}
	For $r\in \mathbb{N}$, 
	\begin{equation}
		\int_0^{\frac{\pi}{2}}\sin^{2r-1}x\cos\frac{x}{2}\,\mathrm{d}x=\frac{6\cdot 16^{r-1}}{r\binom{4r}{2r}}\left(\frac{4}{3}-\sqrt{2}\sum_{k=0}^{r-1} \frac{\binom{4k}{2k}}{(6k+3)16^k}\right)
	\end{equation}
\end{prop}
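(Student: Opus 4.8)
The plan is to follow the same route as Proposition~\ref{prop2.2.1}: produce a first-order recurrence in $r$ for the integral, solve it with Lemma~\ref{lemma2.0.8}, and collapse the resulting products into central binomial coefficients via Lemma~\ref{lemma2.2.3}. Set $L(r)=\int_0^{\pi/2}\sin^{2r-1}x\cos\frac{x}{2}\,\mathrm{d}x$. First I would integrate by parts with $u=\sin^{2r-1}x$ and $\mathrm{d}v=\cos\frac{x}{2}\,\mathrm{d}x$, so that $v=2\sin\frac{x}{2}$; the boundary contributes $\sqrt{2}$ at $x=\pi/2$ and vanishes at $x=0$ (since $2r-1\ge 1$), giving $L(r)=\sqrt{2}-2(2r-1)M$ with $M=\int_0^{\pi/2}\sin^{2r-2}x\cos x\sin\frac{x}{2}\,\mathrm{d}x$.

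Next, for $r\ge 2$ I would integrate $M$ by parts with $u=\sin^{2r-2}x\cos x$ and $\mathrm{d}v=\sin\frac{x}{2}\,\mathrm{d}x$, expanding $\cos^2x=1-\sin^2x$ in $\mathrm{d}u$ to re-express everything in powers of $\sin x$. The boundary terms vanish for $r\ge 2$, and collecting the integrals yields $M=2(2r-2)L(r-1)-2(2r-1)L(r)$. Substituting this into $L(r)=\sqrt{2}-2(2r-1)M$ and using $1-4(2r-1)^2=-(4r-1)(4r-3)$ gives the recurrence
\[
(4r-1)(4r-3)\,L(r)=4(2r-1)(2r-2)\,L(r-1)-\sqrt{2},\qquad r\ge 2.
\]

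The main obstacle is the indexing at the bottom: the coefficient $4(2r-1)(2r-2)$ vanishes at $r=1$, and indeed the second integration by parts is legitimate only for $r\ge 2$ (for $r=1$ the factor $\sin^{2r-3}x$ and the boundary value at $x=0$ both misbehave). Hence Lemma~\ref{lemma2.0.8} cannot be seeded at $L(0)$. I would instead compute the base case directly, $L(1)=\int_0^{\pi/2}\sin x\cos\frac{x}{2}\,\mathrm{d}x=\frac43-\frac{\sqrt2}{3}$, and then shift the index by putting $z_m:=L(m+1)$. The recurrence becomes $a_mz_m=b_mz_{m-1}+r_m$ for $m\ge 1$ with $a_m=(4m+1)(4m+3)$, $b_m=8m(2m+1)$, $r_m=-\sqrt{2}$, and now $b_m\neq 0$ for every $m\ge 1$, so Lemma~\ref{lemma2.0.8} applies.

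Finally I would evaluate the products. Writing $a_m=16\bigl(m+\tfrac14\bigr)\bigl(m+\tfrac34\bigr)$ and $b_m=16\,m\bigl(m+\tfrac12\bigr)$ gives $a_1\cdots a_m=16^m\bigl(\tfrac54\bigr)_m\bigl(\tfrac74\bigr)_m$ and $b_1\cdots b_m=16^m\,m!\,\bigl(\tfrac32\bigr)_m$. Reducing the shifted Pochhammer symbols through $\bigl(\tfrac54\bigr)_m=(4m+1)\bigl(\tfrac14\bigr)_m$, $\bigl(\tfrac74\bigr)_m=\tfrac{4m+3}{3}\bigl(\tfrac34\bigr)_m$, $\bigl(\tfrac32\bigr)_m=(2m+1)\bigl(\tfrac12\bigr)_m$ and then invoking \eqref{227} and \eqref{228} collapses the quotients to $\frac{b_1\cdots b_m}{a_1\cdots a_m}=\frac{3(2m+1)16^m}{(4m+1)(4m+3)\binom{4m}{2m}}$ and $\frac{a_1\cdots a_{k-1}}{b_1\cdots b_k}=\frac{\binom{4k}{2k}}{(6k+3)16^k}$. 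Setting $r=m+1$ turns the prefactor into $\frac{6\cdot 16^{r-1}}{r\binom{4r}{2r}}$, while absorbing the seed $L(1)=\frac43-\frac{\sqrt2}{3}$ as the $k=0$ term of the sum (whose value is $\frac13$) produces $\frac43-\sqrt2\sum_{k=0}^{r-1}\frac{\binom{4k}{2k}}{(6k+3)16^k}$, which is exactly the claimed closed form. I expect the two binomial-ratio simplifications to be the only steps requiring genuine care.
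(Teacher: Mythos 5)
Your proposal is correct and follows essentially the same route as the paper: the same first-order recurrence $(r-\tfrac14)(r-\tfrac34)I(r)=(r-1)(r-\tfrac12)I(r-1)-\tfrac{\sqrt2}{16}$ (which the paper states without derivation and you derive by two integrations by parts), the same index shift to avoid the degenerate coefficient at $r=1$, and the same application of Lemma~\ref{lemma2.0.8} with the products collapsed via Lemma~\ref{lemma2.2.3}. The only cosmetic difference is that you keep $a_m,b_m,r_m$ scaled by $16$ relative to the paper's normalization, which changes nothing in the final telescoped expression.
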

\begin{proof}
	Let
	$$I(r)=	\int_0^{\frac{\pi}{2}}\sin^{2r-1}x\cos\frac{x}{2}\,\mathrm{d}x, \qquad r\geq 1.$$
	It is immediate that $I(r)$ satisfies: 
	$$\left(r-\frac{1}{4}\right)\left(r-\frac{3}{4}\right)I(r)=(r-1)\left(r-\frac{1}{2}\right)I(r-1)-\frac{\sqrt{2}}{16}.$$
	Now, set $L(r)=I(r+1)$ for $r\geq 0$. Then 
	$$\left(r+\frac{1}{4}\right)\left(r+\frac{3}{4}\right)L(r)=r\left(r+\frac{1}{2}\right)L(r-1)-\frac{\sqrt{2}}{16}.$$
	Using Lemma \ref{lemma2.0.8}. with $a_k=\left(k+\frac{1}{4}\right)\left(k+\frac{3}{4}\right)$, $b_k =k\left(k+\frac{1}{2}\right)$ and $r_k=-\frac{\sqrt{2}}{16}$ we have that,
	\begin{align*}
		a_1\cdots a_r&=\left(1+\frac{1}{4}\right)_r\left(1+\frac{3}{4}\right)_r=\frac{(4r+1)(4r+3)}{3}\left(\frac{1}{4}\right)_r\left(\frac{3}{4}\right)_r\\
		b_1\cdots b_r&=r!\left(\frac{3}{2}\right)_r=(2r+1)r!\left(\frac{1}{2}\right)_r.	
	\end{align*}
	Using Lemma \ref{lemma2.2.3} in particular \eqref{227}, we have
	\begin{align*}
		a_1\cdots a_r&=\frac{(4r+1)(4r+3)}{3}\frac{(r!)^2}{4^{3r}}\binom{4r}{2r}\binom{2r}{r},\\
		b_1\cdots b_r&=(2r+1)\frac{(r!)^2}{4^{r}}\binom{2r}{r}.
	\end{align*}
	Hence,
	$$\frac{b_1b_2\cdots b_r}{a_1a_2\cdots a_r}=\frac{3(2r+1)16^r}{(4r+1)(4r+3)\binom{4r}{2r}}$$
	and
	$$\frac{a_1a_2\cdots a_{k-1}}{b_1b_2\cdots b_k}=\frac{a_1a_2\cdots a_{k}}{b_1b_2\cdots b_k}\cdot\frac{1}{a_k}=\frac{\binom{4k}{2k}}{16^{k-1}(6k+3)}.$$
	Finally, we obtain 
	$$I(r)=\frac{6\cdot 16^{r-1}}{r\binom{4r}{2r}}\left(\frac{4}{3}-\sqrt{2}\sum_{k=0}^{r-1} \frac{\binom{4k}{2k}}{(6k+3)16^k}\right)$$
\end{proof}
Now, provided the explicit formulas for these integrals, we obtain the following results.
\begin{Theorem}
	If $r$ is a non-negative integer, then 
	\begin{equation}
		\label{kunle1}
		\sum_{n=1}^{\infty}\frac{1}{(2n+1)(2n+2r+1)4^n}\frac{\binom{4n}{2n}}{\binom{2n+2r}{n+r}}=\frac{1}{2^{2r-1}}K(r)-\frac{1}{(2r+1)\binom{2r}{r}}.
	\end{equation}
	where,
	$$K(r)=\frac{16^r}{(4r+1)\binom{4r}{2r}}\left(2+\sqrt{2}\sum_{k=0}^r \frac{\binom{4k}{2k}}{(4k-1)16^k}\right)$$
\end{Theorem}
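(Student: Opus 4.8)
The plan is to convert the denominator $\binom{2n+2r}{n+r}$ into a Wallis integral, turn the entire summand into the integral of a single power of $\sin$, and then interchange summation and integration so that the series collapses into the generating function $G_{2t}$ whose closed form we already know.

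First I would apply the second Wallis formula of Lemma~\ref{lemma2.0.7} with index $n+r$ to write
\[
\frac{1}{(2n+2r+1)\binom{2n+2r}{n+r}}=\frac{1}{4^{\,n+r}}\int_{0}^{\pi/2}\sin^{2n+2r+1}x\,\mathrm{d}x .
\]
Substituting this into the summand, factoring out $4^{-r}$, and splitting $\sin^{2n+2r+1}x=\sin^{2r+1}x\cdot\sin^{2n}x$ gives
\[
\sum_{n=1}^{\infty}\frac{1}{(2n+1)(2n+2r+1)4^n}\frac{\binom{4n}{2n}}{\binom{2n+2r}{n+r}}
=\frac{1}{4^{r}}\int_{0}^{\pi/2}\sin^{2r+1}x\sum_{n=1}^{\infty}\frac{\binom{4n}{2n}}{(2n+1)4^{2n}}\sin^{2n}x\,\mathrm{d}x .
\]
The interchange of sum and integral is legitimate because on $[0,\pi/2]$ all terms are nonnegative and the inner series has a continuous bounded sum (since $\cos(x/2)\ge 1/\sqrt{2}$ there), so Tonelli's theorem applies.

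Next I would recognize the inner sum. The coefficient $\binom{4n}{2n}/\bigl((2n+1)4^{2n}\bigr)$ is exactly the coefficient $C_{2n}/4^{2n}$ appearing in the generating function $G_{2t}$; since $G_{2t}(x)=1/\cos(x/2)$ includes the $n=0$ term (equal to $1$), the sum starting at $n=1$ is
\[
\sum_{n=1}^{\infty}\frac{\binom{4n}{2n}}{(2n+1)4^{2n}}\sin^{2n}x=\frac{1}{\cos(x/2)}-1 .
\]
Splitting the integral accordingly produces two pieces. The piece coming from the $-1$ is $-\frac{1}{4^r}\int_0^{\pi/2}\sin^{2r+1}x\,\mathrm{d}x$, and by the second Wallis formula this equals precisely $-\frac{1}{(2r+1)\binom{2r}{r}}$, reproducing the correction term on the right-hand side.

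Finally, for the main piece $\frac{1}{4^r}\int_0^{\pi/2}\sin^{2r+1}x/\cos(x/2)\,\mathrm{d}x$ I would apply the double-angle identity $\sin x=2\sin(x/2)\cos(x/2)$, so that $\sin^{2r+1}x=2\sin^{2r}x\,\sin(x/2)\cos(x/2)$ and hence $\sin^{2r+1}x/\cos(x/2)=2\sin^{2r}x\,\sin(x/2)$. This reduces the piece to $\frac{2}{4^r}\int_0^{\pi/2}\sin^{2r}x\,\sin(x/2)\,\mathrm{d}x=\frac{2}{4^r}K(r)=\frac{1}{2^{2r-1}}K(r)$, with $K(r)$ the integral of Proposition~\ref{prop2.2.1}; adding the correction term yields the claimed identity. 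The step requiring the most care is the bookkeeping of the $n=0$ term of $G_{2t}$ — it is exactly this term that generates the subtracted $\frac{1}{(2r+1)\binom{2r}{r}}$ — together with the clean cancellation $\sin^{2r+1}x/\cos(x/2)=2\sin^{2r}x\,\sin(x/2)$, which is what makes the remaining integral collapse precisely onto $K(r)$.
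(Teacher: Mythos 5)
Your proof is correct and takes essentially the same route as the paper's: multiply the generating function $G_{2t}$ by $\sin^{2r+1}x$, integrate over $[0,\pi/2]$ with Wallis' formula, and collapse the remaining integral onto $K(r)$ via $\sin^{2r+1}x/\cos(x/2)=2\sin^{2r}x\sin(x/2)$. Your explicit handling of the $n=0$ term and the Tonelli justification for interchanging sum and integral are details the paper leaves implicit.
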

\begin{proof}
	Consider 
	$$G_{2t}(z):=\sum_{n=0}^{\infty}\frac{\binom{4n}{2n}}{(2n+1)4^n}\sin^{2n}z=\frac{1}{\cos\left(\frac{z}{2}\right)}, \qquad |z|\leq \frac{\pi}{2}.$$
	Multiply both sides by $\sin^{2r+1}z$ we obtain:
	$$\sum_{n=0}^{\infty}\frac{\binom{4n}{2n}}{(2n+1)4^n}\sin^{2n+2r+1}z=\frac{\sin^{2r+1}z}{\cos\left(\frac{z}{2}\right)}, \qquad |z|\leq \frac{\pi}{2}.$$
	Integrating both sides over $[0,\pi/2]$ and using Wallis formula; 
	$$\int_{0}^{\frac{\pi}{2}}\sin^{2n+2r+1}x\,\mathrm{d}x=\frac{4^{n+r}}{(2n+2r+1)\binom{2n+2r}{n+r}}$$
	The result follows immediately using Proposition \ref{prop2.2.1}.
\end{proof}
\begin{coro}
	For $r=0,1,2,3$, we have that
	\begin{align}
		\sum_{n=1}^{\infty}\frac{1}{4^n(2n+1)^2}\frac{\binom{4n}{2n}}{\binom{2n}{n}}&=3-2\sqrt{2},\\
		\sum_{n=1}^{\infty}\frac{1}{4^n(2n+1)(2n+3)}\frac{\binom{4n}{2n}}{\binom{2n+2}{n+1}}&=\frac{11-7\sqrt{2}}{30}, \\
		\sum_{n=1}^{\infty}\frac{1}{4^n(2n+1)(2n+5)}\frac{\binom{4n}{2n}}{\binom{2n+4}{n+2}}&=\frac{172-107\sqrt{2}}{2520}, \\
		\sum_{n=1}^{\infty}\frac{1}{4^n(2n+1)(2n+7)}\frac{\binom{4n}{2n}}{\binom{2n+6}{n+3}}&=\frac{6808-4175\sqrt{2}}{480480}.
	\end{align}
\end{coro}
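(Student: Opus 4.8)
The plan is to treat the Corollary as a direct specialization of the preceding Theorem, evaluating its closed form at $r=0,1,2,3$. The Theorem expresses the series $\sum_{n=1}^{\infty}\frac{1}{(2n+1)(2n+2r+1)4^n}\frac{\binom{4n}{2n}}{\binom{2n+2r}{n+r}}$ as $\frac{1}{2^{2r-1}}K(r)-\frac{1}{(2r+1)\binom{2r}{r}}$, so the only work remaining is to compute $K(r)$ explicitly for each of the four values and then combine it with the correction term $-\frac{1}{(2r+1)\binom{2r}{r}}$.

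First I would record the defining formula $K(r)=\frac{16^r}{(4r+1)\binom{4r}{2r}}\bigl(2+\sqrt{2}\sum_{k=0}^r \frac{\binom{4k}{2k}}{(4k-1)16^k}\bigr)$ and evaluate the finite sum $S(r):=\sum_{k=0}^r \frac{\binom{4k}{2k}}{(4k-1)16^k}$ for $r=0,1,2,3$. Here the $k=0$ term requires care: since $4k-1=-1$ and $\binom{0}{0}=1$, it contributes $-1$, after which the remaining summands $\frac18,\frac{5}{128},\frac{21}{1024},\dots$ are positive rationals. This yields $S(0)=-1$, $S(1)=-\frac78$, $S(2)=-\frac{107}{128}$, and $S(3)=-\frac{835}{1024}$. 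Substituting each $S(r)$ back produces $K(r)$ as a number of the form $\alpha_r-\beta_r\sqrt2$ with $\alpha_r,\beta_r$ positive rationals; multiplying by $2^{1-2r}$ and subtracting $\frac{1}{(2r+1)\binom{2r}{r}}$ then gives the four stated right-hand sides after reduction to a common denominator.

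The computation is entirely arithmetic, so there is no genuine analytic obstacle; the points that demand attention are the sign bookkeeping at the $k=0$ term, the repeated reduction of fractions such as $\frac{70}{1792}=\frac{5}{128}$ and $\frac{924}{11\cdot16^3}=\frac{21}{1024}$, and keeping the rational and $\sqrt2$ parts separate throughout. As a sanity check one verifies the base case $r=0$ by hand—$K(0)=2-\sqrt2$ and $2K(0)-1=3-2\sqrt2$—and confirms the remaining three evaluations, in line with the paper's stated practice of cross-checking identities in Mathematica.
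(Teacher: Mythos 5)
Your proposal is correct and is exactly the intended derivation: the paper states this corollary without proof as a direct specialization of the preceding theorem at $r=0,1,2,3$, and your computed values $S(0)=-1$, $S(1)=-\tfrac{7}{8}$, $S(2)=-\tfrac{107}{128}$, $S(3)=-\tfrac{835}{1024}$ and the resulting $K(r)$ all check out against the four stated right-hand sides. No gap.
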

\begin{Theorem}
	If $r$ is a positive integer, then 
	\begin{equation}
		\label{kunle2}
		\sum_{n=1}^{\infty}\frac{1}{n(2n+2r-1)4^n}\frac{\binom{4n-2}{2n-1}}{\binom{2n+2r-2}{n+r-1}}=\frac{1}{(2r-1)\binom{2r-2}{r-1}}-\frac{1}{2^{2r-2}}I(r), \qquad r\geq 1.
	\end{equation}	
	where,
	$$I(r)=\frac{6\cdot 16^{r-1}}{r\binom{4r}{2r}}\left(\frac{4}{3}-\sqrt{2}\sum_{k=0}^{r-1} \frac{\binom{4k}{2k}}{(6k+3)16^k}\right)$$
\end{Theorem}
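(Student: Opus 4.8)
The plan is to follow the template used to prove \eqref{kunle1}, replacing the generating function $G_{2t}$ by $G_{1t}$ and the integral of Proposition \ref{prop2.2.1} by that of Proposition \ref{prop2.2.2}. The first step is to rewrite the trigonometric generating function
$$G_{1t}(z)=\sum_{n=1}^{\infty}\frac{C_{2n-1}}{4^{2n-1}}\sin^{2n-1}z=\frac{4\sin^2(z/4)}{\sin z}$$
in a form suited to the half-angle $z/2$. Multiplying by $\sin z$ and using $4\sin^2(z/4)=2\bigl(1-\cos(z/2)\bigr)$ turns this into the much cleaner identity
$$\sum_{n=1}^{\infty}\frac{C_{2n-1}}{4^{2n-1}}\sin^{2n}z=2-2\cos\frac{z}{2},\qquad |z|\le\frac{\pi}{2},$$
whose right-hand side is exactly engineered to produce the integral $I(r)$ of Proposition \ref{prop2.2.2}.

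Next I would multiply both sides by $\sin^{2r-1}z$ and integrate over $[0,\pi/2]$; the interchange of summation and integration is justified by the Weierstrass $M$-test, since $C_{2n-1}/4^{2n-1}=O(n^{-3/2})$ and $|\sin z|\le 1$ there. On the left each term becomes an odd Wallis integral
$$\int_0^{\pi/2}\sin^{2n+2r-1}z\,\mathrm{d}z=\frac{4^{n+r-1}}{(2n+2r-1)\binom{2n+2r-2}{n+r-1}},$$
while on the right the integral separates as $2\int_0^{\pi/2}\sin^{2r-1}z\,\mathrm{d}z-2\int_0^{\pi/2}\sin^{2r-1}z\cos(z/2)\,\mathrm{d}z$, the first being again a Wallis integral equal to $4^{r-1}/\bigl((2r-1)\binom{2r-2}{r-1}\bigr)$ and the second being precisely $I(r)$.

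The one step needing care is the algebraic bookkeeping that identifies the left-hand side with the target series. Using the Catalan-to-central-binomial relation $\binom{4n-2}{2n-1}=2n\,C_{2n-1}$ and cancelling the powers $4^{n+r-1}/4^{2n-1}=4^{r}/4^{n}$, the left-hand side collapses to $\tfrac{4^{r}}{2}$ times the series $\sum_{n\ge1}\frac{1}{n(2n+2r-1)4^{n}}\binom{4n-2}{2n-1}\big/\binom{2n+2r-2}{n+r-1}$. Equating with the right-hand side $\frac{2\cdot 4^{r-1}}{(2r-1)\binom{2r-2}{r-1}}-2I(r)$ and multiplying by $2/4^{r}$ then yields \eqref{kunle2}, since $\tfrac{2}{4^{r}}\cdot 2\cdot 4^{r-1}=1$ and $\tfrac{2}{4^{r}}\cdot 2=2^{-(2r-2)}$. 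I expect no conceptual obstacle: the argument is entirely parallel to that for \eqref{kunle1}, the only real subtlety being to track the constant arising from $\binom{4n-2}{2n-1}=2n\,C_{2n-1}$ together with the exponent arithmetic on the powers of $4$, so that the $1/n$ in the summand and the two terms on the right emerge with the correct coefficients.
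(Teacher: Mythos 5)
Your proposal is correct and follows essentially the same route as the paper: multiply the trigonometric generating function $G_{1t}$ by a power of $\sin z$, apply the half-angle identity $4\sin^2(z/4)=2-2\cos(z/2)$, integrate over $[0,\pi/2]$ with the odd Wallis formula, and identify the remaining integral with $I(r)$ from Proposition \ref{prop2.2.2}. The only (harmless) cosmetic difference is that you first absorb one factor of $\sin z$ to display the intermediate identity $\sum_{n\ge1}C_{2n-1}\sin^{2n}z/4^{2n-1}=2-2\cos(z/2)$ before multiplying by $\sin^{2r-1}z$, whereas the paper multiplies by $\sin^{2r}z$ in one step; your constant-tracking via $\binom{4n-2}{2n-1}=2n\,C_{2n-1}$ and the exponent arithmetic both check out.
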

\begin{proof}
	Consider, 
	$$G_{1t}(z):=\sum_{n=1}^{\infty}\frac{\binom{4n-2}{2n-1}}{2n}\frac{\sin^{2n-1}z}{4^{2n-1}}=\frac{4\sin^2\left(\frac{z}{4}\right)}{\sin z}, \qquad |z|\leq \frac{\pi}{2}.$$
	Multiply both sides by $\sin^{2r}z$, we obtain 
	$$\sum_{n=1}^{\infty}\frac{\binom{4n-2}{2n-1}}{2n}\frac{\sin^{2n+2r-1}z}{4^{2n-1}}=4\sin^{2r-1}z\sin^2\left(\frac{z}{4}\right), \qquad |z|\leq \frac{\pi}{2}.$$
	Now, using the fact that 
	$$2\sin^2\left(\frac{z}{4}\right)=1-\cos\left(\frac{z}{2}\right), $$
	It follows that,
	$$\sum_{n=1}^{\infty}\frac{\binom{4n-2}{2n-1}}{2n}\frac{\sin^{2n+2r-1}z}{4^{2n-1}}=2\sin^{2r-1}z-2\sin^{2r-1}z\cos\left(\frac{z}{2}\right), \qquad |z|\leq \frac{\pi}{2}.$$
	Integrate both sides over $[0,\pi/2]$ and using the Wallis' integral: 
	$$\int_{0}^{\frac{\pi}{2}}\sin^{2k+1}z\,\mathrm{d}z=\frac{4^k}{(2k+1)\binom{2k}{k}}.$$
	We obtain,
	$$	\sum_{n=1}^{\infty}\frac{1}{n(2n+2r-1)4^n}\frac{\binom{4n-2}{2n-1}}{\binom{2n+2r-2}{n+r-1}}=\frac{1}{(2r-1)\binom{2r-2}{r-1}}-\frac{1}{2^{2r-2}}\int_0^{\frac{\pi}{2}}\sin^{2r-1}z\cos\left(\frac{z}{2}\right)\,\mathrm{d}z.$$
	which appears in \cite[Thm.\ 11]{2}. Hence, using Proposition \ref{prop2.2.2}, the result follows immediately.
\end{proof}
\begin{coro}
	For $r=1,2,3$, we have that
	\begin{align}
		\sum_{n=1}^{\infty}\frac{1}{4^nn(2n+1)}\frac{\binom{4n-2}{2n-1}}{\binom{2n}{n}}&=\frac{\sqrt{2}-1}{3},\\
		\sum_{n=1}^{\infty}\frac{1}{4^nn(2n+3)}\frac{\binom{4n-2}{2n-1}}{\binom{2n+2}{n+1}}&=\frac{27\sqrt{2}-26}{420}, \\
		\sum_{n=1}^{\infty}\frac{1}{4^nn(2n+5)}\frac{\binom{4n-2}{2n-1}}{\binom{2n+4}{n+2}}&=\frac{755\sqrt{2}-712}{55440}.
	\end{align}
\end{coro}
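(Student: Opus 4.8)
The plan is to specialize the immediately preceding theorem, namely the identity~\eqref{kunle2}, at the three values $r=1,2,3$, using the explicit closed form of $I(r)$ stated there. Since that theorem already establishes
$$\sum_{n=1}^{\infty}\frac{1}{n(2n+2r-1)4^n}\frac{\binom{4n-2}{2n-1}}{\binom{2n+2r-2}{n+r-1}}=\frac{1}{(2r-1)\binom{2r-2}{r-1}}-\frac{1}{2^{2r-2}}I(r),$$
no new analytic input is required: the corollary is purely a matter of substitution and exact arithmetic simplification. For each $r$ I would first confirm that the general lower binomial $\binom{2n+2r-2}{n+r-1}$ collapses to the displayed one, namely $\binom{2n}{n}$ at $r=1$, $\binom{2n+2}{n+1}$ at $r=2$, and $\binom{2n+4}{n+2}$ at $r=3$, so that the left-hand sides match the three series in the statement.

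Next I would evaluate the right-hand side term by term. The leading term $\frac{1}{(2r-1)\binom{2r-2}{r-1}}$ is immediate, using $\binom{0}{0}=1$ at $r=1$. For the second term I would substitute into
$$I(r)=\frac{6\cdot 16^{r-1}}{r\binom{4r}{2r}}\left(\frac{4}{3}-\sqrt{2}\sum_{k=0}^{r-1}\frac{\binom{4k}{2k}}{(6k+3)16^k}\right),$$
where the inner sum is finite with exactly $r$ terms. As an illustration, at $r=1$ the bracket is $\frac{4}{3}-\frac{\sqrt{2}}{3}$ and the prefactor is $1$, so $I(1)=\frac{4-\sqrt{2}}{3}$ and $1-I(1)=\frac{\sqrt{2}-1}{3}$, which is the first claimed value. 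The cases $r=2$ and $r=3$ proceed identically, the finite sum carrying two and three terms respectively, and combining over a common denominator yields $\frac{27\sqrt{2}-26}{420}$ and $\frac{755\sqrt{2}-712}{55440}$.

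The only point requiring genuine care, and hence the sole obstacle, is the bookkeeping when combining the rational and $\sqrt{2}$ parts: the prefactor $\frac{6\cdot 16^{r-1}}{r\binom{4r}{2r}}$ together with the external factor $\frac{1}{2^{2r-2}}$ grows quickly, and the central binomials $\binom{4r}{2r}$ must be reconciled exactly against the denominators $(6k+3)16^k$ appearing in the inner sum before a common denominator is taken. Because every quantity involved is explicit and the sums are finite, each of the three evaluations reduces to a short exact computation, and the resulting closed forms can be independently confirmed with a CAS, as the authors indicate.
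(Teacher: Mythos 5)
Your proposal is correct and matches the paper's (implicit) argument exactly: the corollary is obtained by specializing the preceding theorem's identity \eqref{kunle2} at $r=1,2,3$ and carrying out the finite arithmetic with $I(r)$, and your worked value $I(1)=\tfrac{4-\sqrt{2}}{3}$ giving $\tfrac{\sqrt{2}-1}{3}$ is right. The remaining two cases check out by the same substitution, so nothing further is needed.
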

\section{New Results on series involving \texorpdfstring{$\binom{2n}{n}\binom{2n+2r}{n+r}^{-1}$}{binomial(2n,n)}, \texorpdfstring{$r$}{r} a non negative parameter}
In this section, we present our main results. We start by a generalization of \cite[Thm.\ 9]{2} to 2 parameters. It's based on the following results obtained by Boyadzhiev along with its proof as it appears in \cite{Boyadzhiev}.
\begin{Lemma}[\cite{Boyadzhiev}, p.\ 9]
	For $m\in \mathbb{Z}_{\geq 0}$.
	\begin{equation}
		\label{section3}
		\sum_{n=0}^{\infty}\binom{2n}{n}\frac{x^{n+m+1}}{n+m+1}=\frac{1}{2^{2m+1}}\sum_{k=0}^m\binom{m}{k}\frac{(-1)^k}{2k+1}[1-(1-4x)^k\sqrt{1-4x}], \qquad |x|<\frac{1}{4}.
	\end{equation}
\end{Lemma}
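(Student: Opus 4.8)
The plan is to reduce the left-hand side to the incomplete integral $\int_0^x t^m/\sqrt{1-4t}\,\mathrm{d}t$ and then evaluate it using the antiderivative already recorded in Proposition \ref{prop2.1.1}. First I would set $f(x)=\sum_{n=0}^{\infty}\binom{2n}{n}\frac{x^{n+m+1}}{n+m+1}$ and differentiate term by term, which is legitimate on $|x|<1/4$ since the underlying power series has radius of convergence $1/4$ and converges uniformly on compact subsets. Differentiation gives $f'(x)=\sum_{n=0}^{\infty}\binom{2n}{n}x^{n+m}=x^m/\sqrt{1-4x}$, using the stated generating function $\sum_{n\geq 0}\binom{2n}{n}x^n=(1-4x)^{-1/2}$. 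Because every term of $f$ carries a factor $x^{n+m+1}$ with $n+m+1\geq 1$, we have $f(0)=0$, so $f(x)=\int_0^x t^m/\sqrt{1-4t}\,\mathrm{d}t$.

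Next I would rescale to match the $\mathcal{B}$-type integrand: the substitution $u=4t$ turns this into $f(x)=\frac{1}{4^{m+1}}\int_0^{4x}\frac{u^m}{\sqrt{1-u}}\,\mathrm{d}u$. The integrand here is exactly the one whose antiderivative is given in Proposition \ref{prop2.1.1} (with $k$ replaced by $m$ and $t$ by $u$), namely $-\sum_{p=0}^m\binom{m}{p}\frac{(-1)^p}{p+\frac12}(1-u)^{p+1/2}$. Evaluating this between $u=0$ and $u=4x$ and writing $(1-u)^{p+1/2}=(1-u)^p\sqrt{1-u}$ yields $\int_0^{4x}\frac{u^m}{\sqrt{1-u}}\,\mathrm{d}u=\sum_{p=0}^m\binom{m}{p}\frac{2(-1)^p}{2p+1}\bigl[1-(1-4x)^p\sqrt{1-4x}\bigr]$, where the contribution at the lower limit $u=0$ supplies the constant $1$ inside each bracket and the factor $2$ comes from $\frac{1}{p+1/2}=\frac{2}{2p+1}$.

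Finally I would clear the constants. Combining the last two displays gives $f(x)=\frac{2}{4^{m+1}}\sum_{p=0}^m\binom{m}{p}\frac{(-1)^p}{2p+1}\bigl[1-(1-4x)^p\sqrt{1-4x}\bigr]$, and since $\frac{2}{4^{m+1}}=\frac{2}{2^{2m+2}}=\frac{1}{2^{2m+1}}$, relabeling the summation index $p$ as $k$ produces precisely the asserted identity. The argument is entirely elementary; the only points warranting a word of care are the justification of term-by-term differentiation and integration on $(-1/4,1/4)$ and the bookkeeping of the constant of integration, so I do not anticipate any genuine obstacle.
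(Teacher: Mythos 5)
Your proof is correct and follows essentially the same route as the paper: differentiate term by term to get $f'(x)=x^m/\sqrt{1-4x}$, note $f(0)=0$, and evaluate $\int_0^x t^m(1-4t)^{-1/2}\,\mathrm{d}t$ by a binomial expansion. The only cosmetic difference is that the paper substitutes $y=\sqrt{1-4t}$ and expands $(1-y^2)^m$ directly, while you rescale via $u=4t$ and reuse the antiderivative from Proposition \ref{prop2.1.1}; the bookkeeping in both cases is identical.
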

\begin{proof}
	Let $f_m(x)$ be the L.H.S. Thus,
	$$\frac{\mathrm{d}(f_m(x))}{\mathrm{d}x}=\sum_{n=0}^{\infty}\binom
	{2n}{n}x^{n+m}=x^m\sum_{n=0}^{\infty}\binom{2n}{n}x^n=\frac{x^m}{\sqrt{1-4x}}.$$
	Using the well known generating function of central binomial coefficient
	\begin{equation}
		\sum_{n=0}^{\infty}\binom{2n}{n}x^n=\frac{1}{\sqrt{1-4x}}, \qquad |x|<\frac{1}{4}.
	\end{equation}
	So that, 
	$$f_m(x)=\int_0^x\frac{t^m}{\sqrt{1-4t}}\,\mathrm{d}t=-\frac{1}{2^{2m+1}}\int_0^{\sqrt{1-4x}}(1-y^2)^m\,\mathrm{d}y.$$
	using the substitution $y=\sqrt{1-4t}$ and expanding $(1-y^2)^m$ as\\ $(1-y^2)^m=\sum_{k=0}^m\binom{m}{k}(-1)^ky^{2k}$. We get,
	$$f_m(x)=	\frac{1}{2^{2m+1}}\sum_{k=0}^m\binom{m}{k}\frac{(-1)^k}{2k+1}[1-(1-4x)^k\sqrt{1-4x}].$$	
\end{proof}	
\begin{Theorem} For $m\in \mathbb{Z}_{\geq 0}.$
	\begin{align*}
		\label{section31}
		&\sum_{n=0}^{\infty}\frac{1}{(n+m+1)(2n+2m+2r+1)}\frac{\binom{2n}{n}}{\binom{2n+2m+2r}{n+m+r}}\\&=\frac{1}{2(2m+1)\binom{2m}{m}}\cdot\frac{1}{(2r-1)\binom{2r-2}{r-1}}-\frac{1}{4^{m+r}}\sum_{k=0}^m\binom{m}{k}\frac{(-1)^k}{2k+1}\frac{k!}{(r)_{k+1}}, \qquad r>0.
	\end{align*}
\end{Theorem}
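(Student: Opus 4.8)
The plan is to mirror the generating-function-plus-Wallis method used throughout Section 4: I will turn the ratio $\binom{2n}{n}\binom{2n+2m+2r}{n+m+r}^{-1}$ into a trigonometric integral, pull the sum inside, and then recognize the series appearing in the integrand as the preceding Boyadzhiev Lemma \eqref{section3} evaluated at a trigonometric argument.

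First I would apply the odd Wallis formula from Lemma \ref{lemma2.0.7} in the form
\[
\frac{1}{(2n+2m+2r+1)\binom{2n+2m+2r}{n+m+r}}=\frac{1}{4^{n+m+r}}\int_0^{\pi/2}\sin^{2n+2m+2r+1}z\,\mathrm{d}z,
\]
which is legitimate precisely because $r>0$ keeps the exponent at least $1$ and the integral finite. Substituting this into the left-hand side and interchanging summation and integration (justified since $\binom{2n}{n}4^{-n}=O(n^{-1/2})$ makes the series uniformly convergent on $[0,\pi/2]$), and then splitting $\sin^{2n+2m+2r+1}z=\sin^{2n+2m+2}z\,\sin^{2r-1}z$ together with $4^{-(n+m+r)}=4^{-(n+m+1)}4^{-(r-1)}$, the left-hand side becomes
\[
\frac{1}{4^{r-1}}\int_0^{\pi/2}\sin^{2r-1}z\left(\sum_{n=0}^{\infty}\binom{2n}{n}\frac{1}{n+m+1}\frac{\sin^{2n+2m+2}z}{4^{n+m+1}}\right)\mathrm{d}z.
\]

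The key step is to evaluate the inner series through \eqref{section3} with $x=\tfrac14\sin^2 z$, so that $1-4x=\cos^2 z$ and $\sqrt{1-4x}=\cos z$ on $[0,\pi/2]$, giving $(1-4x)^k\sqrt{1-4x}=\cos^{2k+1}z$ and reducing the inner series to $2^{-(2m+1)}\sum_{k=0}^m\binom{m}{k}\tfrac{(-1)^k}{2k+1}\bigl(1-\cos^{2k+1}z\bigr)$. After collecting the powers of two into the prefactor $2^{-(2m+2r-1)}$, the problem reduces to the two elementary integrals $\int_0^{\pi/2}\sin^{2r-1}z\,\mathrm{d}z$ and $\int_0^{\pi/2}\sin^{2r-1}z\cos^{2k+1}z\,\mathrm{d}z$. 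The first is the Wallis value $4^{r-1}/\bigl((2r-1)\binom{2r-2}{r-1}\bigr)$, and the second is a Beta integral equal to $\tfrac{(r-1)!\,k!}{2(r+k)!}=\tfrac{k!}{2(r)_{k+1}}$, which is exactly the Pochhammer factor appearing in the statement.

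To finish, I would split the resulting finite $k$-sum into two pieces. The piece coming from $\cos^{2k+1}z$ immediately yields the second term $-4^{-(m+r)}\sum_{k}\binom{m}{k}\tfrac{(-1)^k}{2k+1}\tfrac{k!}{(r)_{k+1}}$, while the piece coming from the constant $1$ leaves the factor $\sum_{k=0}^m\binom{m}{k}\tfrac{(-1)^k}{2k+1}$, which I would evaluate by the standard identity $\sum_{k=0}^m\binom{m}{k}\tfrac{(-1)^k}{2k+1}=\tfrac{4^m}{(2m+1)\binom{2m}{m}}$ (itself the Wallis evaluation of $\int_0^1(1-t^2)^m\,\mathrm{d}t$); combined with the Wallis value of the first integral and the prefactors this collapses to $\tfrac{1}{2(2m+1)\binom{2m}{m}(2r-1)\binom{2r-2}{r-1}}$. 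The main obstacle is not conceptual but the careful bookkeeping of the powers of two — tracking $4^{r-1}$, $2^{2m+1}$, $4^{r-1}$ and $4^m$ so that they cancel to give precisely $\tfrac12$ in the first term and $4^{-(m+r)}$ in the second; a secondary point requiring care is the rigorous justification of the sum--integral interchange and the role of the hypothesis $r>0$ in keeping every integral convergent.
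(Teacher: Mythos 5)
Your proposal is correct and is essentially the paper's own proof read in the opposite direction: the paper substitutes $x=\sin^2 y$ into the Boyadzhiev identity \eqref{section3}, multiplies by $\sin^{2r-1}y$ and integrates over $[0,\pi/2]$, while you start from the series, insert the Wallis integral representation, and then recognize the inner sum as \eqref{section3} at $x=\tfrac14\sin^2 z$ --- the same lemma, the same Wallis and Beta evaluations, and the same identity $\sum_{k=0}^m\binom{m}{k}\tfrac{(-1)^k}{2k+1}=\tfrac{4^m}{(2m+1)\binom{2m}{m}}$. The power-of-two bookkeeping you flag does close up exactly as you describe, so no gap remains.
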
	
\begin{proof}
	Replacing $x$ by $x/4$ in \eqref{section3}, we obtain 
	\begin{equation}
		\label{proof1}
		\sum_{n=0}^{\infty}\binom{2n}{n}\frac{x^{n+m+1}}{4^{n+m+1}(n+m+1)}=\frac{1}{2\cdot 4^m}\sum_{k=0}^m\binom{m}{k}\frac{(-1)^k}{2k+1}[1-(1-x)^k\sqrt{1-x}], \qquad |x|<1.
	\end{equation}
	which implies that,
	\begin{align}
		\sum_{n=0}^{\infty}\binom{2n}{n}\frac{x^{n+m+1}}{4^n(n+m+1)}&=2\sum_{k=0}^m\binom{m}{k}\frac{(-1)^k}{2k+1}[1-(1-x)^k\sqrt{1-x}],\\
		&=2\sum_{k=0}^m\binom{m}{k}\frac{(-1)^k}{2k+1}-2\sum_{k=0}^m\binom{m}{k}\frac{(-1)^k}{2k+1}(1-x)^k\sqrt{1-x}.
	\end{align}	
	We remark the following fact;
	$$\sum_{k=0}^m\binom{m}{k}\frac{(-1)^k}{2k+1}=\int_0^1(1-t^2)^m\,\mathrm{d}t=\int_0^{\frac{\pi}{2}}\cos^{2m+1}x\,\mathrm{d}x.$$
	using the substitution $t=\sin x$. Thus, 
	$$	\sum_{k=0}^m\binom{m}{k}\frac{(-1)^k}{2k+1}=\frac{4^m}{(2m+1)\binom{2m}{m}}.$$
	We arrive at, 
	$$	\sum_{n=0}^{\infty}\binom{2n}{n}\frac{x^{n+m+1}}{4^n(n+m+1)}=\frac{2\cdot 4^m}{(2m+1)\binom{2m}{m}}-2\sum_{k=0}^m\binom{m}{k}\frac{(-1)^k}{2k+1}(1-x)^k\sqrt{1-x}.$$
	Let $x=\sin^2y$ and multiply both sides by $\sin^{2r-1}y$ for $r>0$. Then:
	\begin{align*}
		\begin{split}
			\sum_{n=0}^{\infty}\binom{2n}{n}\frac{\sin^{2n+2m+2r+1}y}{4^n(n+m+1)}=\frac{2\cdot 4^m}{(2m+1)\binom{2m}{m}}\sin^{2r-1}y-2\sum_{k=0}^m\binom{m}{k}\frac{(-1)^k}{2k+1}\cos^{2k+1}y\sin^{2r-1}y.
		\end{split}
	\end{align*}
	Integrating both sides over the interval $[0,\pi/2]$ and using the Wallis' integral    \eqref{lemma2.0.7} and the beta function, $B(u,v)=2\int_0^{\frac{\pi}{2}}\cos^{2u-1}\theta\sin^{2v-1}\theta\,\mathrm{d}\theta$ \cite[Section 5.12 ]{NIST}, we obtain:
	\begin{align*}
		\begin{split}
			\sum_{n=0}^{\infty}\frac{1}{(n+m+1)(2n+2m+2r+1)}\frac{\binom{2n}{n}}{\binom{2n+2m+2r}{n+m+r}}=\frac{1}{2(2m+1)\binom{2m}{m}}\cdot\frac{1}{(2r-1)\binom{2r-2}{r-1}}\\-\frac{1}{4^{m+r}}\sum_{k=0}^m\binom{m}{k}\frac{(-1)^k}{2k+1}B(k+1,r)
		\end{split}
	\end{align*}
	for $r>0$. 
	The result follows immediately from this end.	
\end{proof}
\begin{coro}\cite[p.\ 15, Theorem 9]{2}.
	If $r$ is a positive integer, then 
	\begin{align}
		\label{coro1}
		\sum_{n=0}^{\infty}\frac{1}{(n+1)(2n+2r+1)}\frac{\binom{2n}{n}}{\binom{2n+2r}{n+r}}=\frac{1}{2(2r-1)\binom{2r-2}{r-1}}-\frac{1}{2^{2r}r}.
	\end{align}
\end{coro}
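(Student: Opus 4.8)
The plan is to obtain this corollary directly as the single-parameter specialization $m=0$ of the theorem proved immediately above, so no independent argument is needed beyond simplifying the general two-parameter formula. First I would set $m=0$ throughout the identity
$$\sum_{n=0}^{\infty}\frac{1}{(n+m+1)(2n+2m+2r+1)}\frac{\binom{2n}{n}}{\binom{2n+2m+2r}{n+m+r}}=\frac{1}{2(2m+1)\binom{2m}{m}}\cdot\frac{1}{(2r-1)\binom{2r-2}{r-1}}-\frac{1}{4^{m+r}}\sum_{k=0}^m\binom{m}{k}\frac{(-1)^k}{2k+1}\frac{k!}{(r)_{k+1}}.$$
On the left this collapses $n+m+1$ to $n+1$ and $2n+2m+2r+1$ to $2n+2r+1$, and the ratio of binomial coefficients becomes $\binom{2n}{n}\binom{2n+2r}{n+r}^{-1}$, matching the claimed left-hand side verbatim.

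Next I would simplify the two terms on the right. In the first term $2m+1=1$ and $\binom{2m}{m}=\binom{0}{0}=1$, so it reduces to $\frac{1}{2(2r-1)\binom{2r-2}{r-1}}$. In the second term the finite sum $\sum_{k=0}^{m}$ retains only the single summand $k=0$ when $m=0$; there $\binom{0}{0}=1$, the factor $\frac{(-1)^0}{2\cdot 0+1}=1$, and $\frac{k!}{(r)_{k+1}}=\frac{0!}{(r)_{1}}=\frac{1}{r}$ by the Pochhammer convention $(r)_1=r$. Combined with $4^{m+r}=4^{r}=2^{2r}$, the second term becomes $\frac{1}{2^{2r}r}$, and assembling the pieces yields
$$\sum_{n=0}^{\infty}\frac{1}{(n+1)(2n+2r+1)}\frac{\binom{2n}{n}}{\binom{2n+2r}{n+r}}=\frac{1}{2(2r-1)\binom{2r-2}{r-1}}-\frac{1}{2^{2r}r}$$
as asserted.

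There is no genuine obstacle here, since the corollary is a pure specialization; the only points demanding care are the boundary evaluation of the Pochhammer factor at $k=0$, namely confirming $(r)_{1}=r$ rather than mishandling the empty-product case, and reading off the degenerate single-term sum correctly. The hypothesis $r>0$ (with $r$ a positive integer) carries over directly from the theorem and guarantees that every denominator, including $(2r-1)\binom{2r-2}{r-1}$ and $r$, is nonzero.
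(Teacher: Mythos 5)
Your proposal is correct and matches the paper's intent exactly: the corollary is presented as the $m=0$ specialization of the two-parameter theorem proved immediately before it, and your simplifications of both sides (including the evaluation $(r)_1=r$ in the lone $k=0$ summand) are all accurate. Nothing further is needed.
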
	
\par Motivated by the above corollary, we find several interesting similar results based on the generating functions presented in Section 2. Before we proceed we shall state a useful lemma required in the simplification of the expressions for the theorems that will come next.
\begin{Lemma}\label{lemma3.0.2}
	For $q\in \mathbb{Z}_{\geq 0},$
	$$\wp(q):=\int_0^{\frac{\pi}{2}}z\sin^qz\,\mathrm{d}z=\begin{cases}
		\vspace{4mm}
		\displaystyle{	\frac{\binom{2n}{n}}{4^{n+1}}\left(\frac{\pi^2}{2}+\sum_{k=1}^n\frac{4^k}{k^2\binom{2k}{k}}\right)}, & \text{if $q=2n$ },\\
		
		\displaystyle{	\frac{4^n}{(2n+1)\binom{2n}{n}}\left(1+\sum_{k=1}^n\frac{\binom{2k}{k}}{4^k(2k+1)}\right)}, & \text{if $q=2n+1$ }.
	\end{cases}$$
\end{Lemma}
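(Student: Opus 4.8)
The plan is to reduce the evaluation of $\wp(q)$ to a single two-term recurrence that is uniform in the parity of $q$, and then peel off the two cases by feeding that recurrence into Lemma \ref{lemma2.0.8}. First I would establish, for $q\geq 2$, the reduction
$$\wp(q)=\frac{q-1}{q}\,\wp(q-2)+\frac{1}{q^2}.$$
To obtain this I would write $\sin^q z=\sin^{q-2}z\,(1-\cos^2 z)$, so that $\wp(q)=\wp(q-2)-\int_0^{\pi/2} z\sin^{q-2}z\cos^2 z\,\mathrm{d}z$, and then handle the last integral through the identity $\frac{\mathrm{d}}{\mathrm{d}z}\!\left(\sin^{q-1}z\cos z\right)=(q-1)\sin^{q-2}z\cos^2 z-\sin^q z$. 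Integrating $z$ against this derivative by parts annihilates both boundary terms (at $z=\pi/2$ because $\cos z=0$, and at $z=0$ because $\sin z=0$ for $q\geq 2$) and leaves $-\int_0^{\pi/2}\sin^{q-1}z\cos z\,\mathrm{d}z=-1/q$. Solving the resulting linear relation, in which $\wp(q)$ reappears on the right-hand side, for $\wp(q)$ produces the displayed recurrence.

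With the recurrence in hand I would compute the two base values directly: $\wp(0)=\int_0^{\pi/2}z\,\mathrm{d}z=\pi^2/8$, and by one integration by parts $\wp(1)=\int_0^{\pi/2}z\sin z\,\mathrm{d}z=1$. For the even case set $E_n=\wp(2n)$, so the recurrence reads $E_n=\frac{2n-1}{2n}E_{n-1}+\frac{1}{4n^2}$; for the odd case set $O_n=\wp(2n+1)$, giving $O_n=\frac{2n}{2n+1}O_{n-1}+\frac{1}{(2n+1)^2}$. Each of these is a first-order recursion of exactly the shape treated in Lemma \ref{lemma2.0.8}, with $a_k=1$ throughout and with $b_k=\frac{2k-1}{2k},\ r_k=\frac{1}{4k^2}$ in the even case and $b_k=\frac{2k}{2k+1},\ r_k=\frac{1}{(2k+1)^2}$ in the odd case.

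The final step is to recognize the telescoping products as central binomial coefficients. In the even case $\prod_{k=1}^n\frac{2k-1}{2k}=\frac{(2n)!}{4^n(n!)^2}=\frac{1}{4^n}\binom{2n}{n}$, while in the odd case $\prod_{k=1}^n\frac{2k}{2k+1}=\frac{4^n}{(2n+1)\binom{2n}{n}}$; substituting these, together with the base values $E_0=\pi^2/8$ and $O_0=1$, into the formula of Lemma \ref{lemma2.0.8} and extracting a common factor $\tfrac14$ in the even case reproduces exactly the two claimed closed forms. I expect the only delicate point to be the derivation of the recurrence itself, since $\wp(q)$ reappears on both sides after the integration by parts and one must verify carefully that the boundary contributions genuinely vanish for $q\geq 2$; everything afterward is a mechanical application of Lemma \ref{lemma2.0.8} and routine simplification of double factorials.
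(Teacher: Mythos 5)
Your proposal is correct and follows essentially the same route as the paper: both derive the recurrence $\wp(q)=\frac{q-1}{q}\wp(q-2)+\frac{1}{q^2}$ by writing $\sin^q z=\sin^{q-2}z(1-\cos^2 z)$ and integrating by parts (your explicit use of $\frac{\mathrm{d}}{\mathrm{d}z}(\sin^{q-1}z\cos z)$ is just the paper's step made transparent), then split by parity with $\wp(0)=\pi^2/8$, $\wp(1)=1$ and apply Lemma \ref{lemma2.0.8}. The product evaluations and the final factor-of-$\tfrac14$ extraction in the even case are all as in the paper's argument.
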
	
\begin{proof}
	From $\sin^2 z=1-\cos^2 z$, it is immediate that, 
	$$\wp(q)=\wp(q-2)-\int_0^{\frac{\pi}{2}}z\cos^2z\sin^{q-2}z\,\mathrm{d}z.$$
	using the Integration by parts formula, we observed that
	$$	\int_0^{\frac{\pi}{2}}z\cos^2z\sin^{q-2}z\,\mathrm{d}z=-\frac{1}{q(q-1)}+\frac{1}{q-1}\wp(q)$$
	Immediately, we see that 
	\begin{equation}
		\label{holy}
		\wp(q)=\frac{q-1}{q}\wp(q-2)+\frac{1}{q^2}.
	\end{equation}
	Let $x_n=\wp(2n)$ for $n\in \mathbb{Z}_{\geq 0}$, we have that 
	$$2nx_n=(2n-1)x_{n-1}+\frac{1}{4n^2},$$
	with $x_0=\wp(0)=\frac{\pi^2}{8},$ and using Lemma \ref{lemma2.0.8}, we get ,
	$$x_n=	\frac{\binom{2n}{n}}{4^{n+1}}\left(\frac{\pi^2}{2}+\sum_{k=1}^n\frac{4^k}{k^2\binom{2k}{k}}\right).$$
	Similarly, let $y_n=\wp(2n+1)$, we see that,
	$$(2n+1)y_n=2ny_{n-1}+\frac{1}{(2n+1)^2}.$$
	with $y_0=\wp(1)=1$, and using Lemma ~\ref{lemma2.0.8}, we get, 
	$$y_n=	\frac{4^n}{(2n+1)\binom{2n}{n}}\left(1+\sum_{k=1}^n\frac{\binom{2k}{k}}{4^k(2k+1)}\right).$$
\end{proof}
Before we continue, we would state some values for $\wp(n)$ as follows;
\begin{center}
	\begin{tabular}{|c|c|c|c|c|c|c|c|c|c|c|}
		\hline
		$n$& 1 & 2 & 3 & 4 & 5 & 6 & 7 & 8 & 9 & 10 \\
		\hline
		$\wp(n)$	& 1& $\frac{1}{4}+\frac{\pi^2}{16}$ & $\frac{7}{9}$ & $\frac{1}{4}+\frac{3\pi^2}{64}$ & $\frac{149}{225}$ & $\frac{17}{72}+\frac{5\pi^2}{128}$ & $\frac{2161}{3675}$ & $\frac{2}{9}+\frac{35\pi^2}{1024}$ & $\frac{53089}{99225}$ & $\frac{21}{100}+\frac{63\pi^2}{2048}$\\
		\hline
	\end{tabular}
\end{center}
\vspace{3mm}
Using the generating functions presented in section 2, Wallis integrals and the $\wp(q)$ formula in Lemma \ref{lemma3.0.2}, we derive new interesting series as it follows.	
\begin{Theorem}\label{theorem5.0.2}
	For every non-negative integer $r$,
	\begin{align*}
		&\sum_{n=1}^{\infty}\frac{n}{(2n+2r+3)(2n-1)^2(2n+1)(2n+3)}\frac{\binom{2n}{n}}{\binom{2n+2r+2}{n+r+1}}\\
		&=\frac{1}{2^{2r+2}}\left(\frac{8\wp(2r+4)-8\wp(2r+2)+3\wp(2r)}{128}+\frac{6}{128(4+2r)}-\frac{3}{128(2+2r)}\right).
	\end{align*}
\end{Theorem}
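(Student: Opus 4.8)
The plan is to turn the power-series identity of Lemma~\ref{lemma2.0.1} into a definite integral via the substitution $x=\sin z$, and then integrate against a shifted power of $\sin z$ so that the odd-power Wallis formula produces the central-binomial ratio while Lemma~\ref{lemma3.0.2} produces the $\wp$-terms. Concretely, first I would set $x=\sin z$ with $z\in[0,\pi/2]$ in \eqref{1}, using $\sin^{-1}(\sin z)=z$ and $\sqrt{1-\sin^2 z}=\cos z$, to rewrite it as
$$\sum_{n=1}^{\infty}\frac{n\binom{2n}{n}}{4^n(2n-1)^2(2n+1)(2n+3)}\sin^{2n+3}z=\frac{(8\sin^4 z-8\sin^2 z+3)z+\cos z\,(6\sin^3 z-3\sin z)}{128}.$$

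Next I would multiply both sides by $\sin^{2r}z$ and integrate over $[0,\pi/2]$. On the left, after interchanging summation and integration (legitimate because the series in \eqref{1} converges on the closed interval $|x|\le 1$) I apply the odd-power Wallis formula of Lemma~\ref{lemma2.0.7} with index $n+r+1$, namely
$$\int_0^{\pi/2}\sin^{2n+2r+3}z\,\mathrm{d}z=\frac{4^{n+r+1}}{(2n+2r+3)\binom{2n+2r+2}{n+r+1}}.$$
Since $4^{n+r+1}/4^n=4^{r+1}$ is independent of $n$, the left-hand side collapses to exactly $4^{r+1}$ times the target series.

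On the right I would split the integrand into two pieces. The piece carrying the factor $z$ gives, straight from the definition of $\wp$ in Lemma~\ref{lemma3.0.2},
$$\int_0^{\pi/2}(8\sin^4 z-8\sin^2 z+3)\,z\,\sin^{2r}z\,\mathrm{d}z=8\wp(2r+4)-8\wp(2r+2)+3\wp(2r).$$
The remaining piece is elementary: the substitution $u=\sin z$ yields $\int_0^{\pi/2}(6\sin^{2r+3}z-3\sin^{2r+1}z)\cos z\,\mathrm{d}z=\int_0^1(6u^{2r+3}-3u^{2r+1})\,\mathrm{d}u=\frac{6}{2r+4}-\frac{3}{2r+2}$. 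Dividing the resulting identity by $4^{r+1}=2^{2r+2}$ and distributing the common factor $1/128$ reproduces precisely the claimed closed form.

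The computation is routine once the substitution and the power shift are in place, so I expect the only point deserving a word of justification to be the interchange of sum and integral at the endpoint $z=\pi/2$ (equivalently $x=1$), which is covered by the stated validity $|x|\le 1$ of Lemma~\ref{lemma2.0.1}. The remaining effort is bookkeeping: tracking the exponent shift $2n+3\mapsto 2n+2r+3$ so that Wallis delivers the index $n+r+1$, and keeping the three values $\wp(2r+4),\wp(2r+2),\wp(2r)$ aligned with the polynomial coefficients $8,-8,3$.
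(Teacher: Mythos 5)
Your proposal is correct and follows essentially the same route as the paper: multiply the identity of Lemma~\ref{lemma2.0.1} by $x^{2r}$ (equivalently $\sin^{2r}z$ after the substitution $x=\sin z$), integrate over $[0,\pi/2]$, and evaluate the left side with the odd-power Wallis formula and the right side via $\wp$ from Lemma~\ref{lemma3.0.2} plus an elementary $u=\sin z$ integral. Your write-up is in fact more detailed than the paper's two-line sketch, and the bookkeeping (the factor $4^{r+1}=2^{2r+2}$ and the terms $\frac{6}{2r+4}-\frac{3}{2r+2}$) checks out.
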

\begin{proof}
	From Lemma \ref{lemma2.0.1}, multiply through by $x^{2r}$ to get,
	\begin{align*}
		\sum_{n=1}^{\infty}\frac{nx^{2n+2r+3}}{4^n(2n-1)^2(2n+1)(2n+3)}\binom{2n}{n}&=\frac{(8x^{4+2r}-8x^{2+2r}+3x^{2r})\sin^{-1}x}{128}\\
		&+\frac{\sqrt{1-x^2}(6x^{3+2r}-3x^{1+2r})}{128}	.	
	\end{align*}
	Set $x=\sin t$ and integrate over the interval $[0,\frac{\pi}{2}]$ using the Wallis' integral, accompanied with Lemma~\ref{lemma3.0.2} to get the desired result.
\end{proof}
\begin{ex}For $r=0,1$, we have that
	\begin{align}
		&\sum_{n=1}^{\infty}\frac{n}{(2n+3)^2(2n-1)^2(2n+1)}\frac{\binom{2n}{n}}{\binom{2n+2}{n+1}}=\frac{\pi^2}{2048}.\\
		&\sum_{n=1}^{\infty}\frac{n}{(2n+5)(2n-1)^2(2n+1)(2n+3)}\frac{\binom{2n}{n}}{\binom{2n+4}{n+2}}=\frac{64+9\pi^2}{147456}.
	\end{align}
\end{ex}
Now, using similar approach, we can prove the following theorems below in conjunction to Lemma [\ref{lemma2.0.2}, \ref{lemma2.0.4}, \ref{lemma2.0.6}] respectively.
\begin{Theorem}
	\label{5.0.3}
	For every non-negative integer $r$,
	\begin{align*}
		\sum_{n=1}^{\infty}\frac{n^2}{(2n+2r+1)(2n-1)^2(2n+1)}\frac{\binom{2n}{n}}{\binom{2n+2r}{n+r}}=\frac{1}{2^{2r+1}}\left(\frac{2\wp(2r+2)+\wp(2r)}{8}-\frac{1}{16(r+1)}\right).
	\end{align*}
\end{Theorem}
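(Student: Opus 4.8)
The plan is to mirror the proof of Theorem~\ref{theorem5.0.2}, replacing Lemma~\ref{lemma2.0.1} by Lemma~\ref{lemma2.0.2}. First I would multiply identity~\eqref{2} through by $x^{2r}$ to obtain
$$\sum_{n=1}^{\infty}\frac{2n^2x^{2n+2r+1}}{4^n(2n-1)^2(2n+1)}\binom{2n}{n}=\frac{(2x^{2r+2}+x^{2r})\sin^{-1}x-x^{2r+1}\sqrt{1-x^2}}{8}.$$
Setting $x=\sin t$, so that $\sin^{-1}x=t$ and $\sqrt{1-x^2}=\cos t$ on $[0,\tfrac{\pi}{2}]$, turns this into an identity in $t$ whose right-hand side is a combination of terms of the form $t\sin^{q}t$ plus a single $\sin^{2r+1}t\cos t$ term.

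Next I would integrate both sides over $[0,\tfrac{\pi}{2}]$. On the left, interchanging summation and integration and applying the odd-power Wallis formula from Lemma~\ref{lemma2.0.7} with exponent $2n+2r+1=2(n+r)+1$ replaces $\int_0^{\pi/2}\sin^{2n+2r+1}t\,\mathrm{d}t$ by $\tfrac{4^{n+r}}{(2n+2r+1)\binom{2n+2r}{n+r}}$; the factor $4^n$ cancels and a global constant $2\cdot 4^{r}=2^{2r+1}$ factors out, leaving precisely the target series. On the right, the two $t\sin^{q}t$ contributions are read off as $\wp(2r+2)$ and $\wp(2r)$ through Lemma~\ref{lemma3.0.2}, while the elementary integral $\int_0^{\pi/2}\sin^{2r+1}t\cos t\,\mathrm{d}t=\tfrac{1}{2(r+1)}$ produces the $-\tfrac{1}{16(r+1)}$ term. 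Dividing through by $2^{2r+1}$ gives the stated closed form.

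The only point requiring care is the interchange of sum and integral. Since~\eqref{2} is valid for $|x|\le 1$, the series of nonnegative terms $\tfrac{2n^2}{4^n(2n-1)^2(2n+1)}\binom{2n}{n}\sin^{2n+2r+1}t$ is bounded on $[0,\tfrac{\pi}{2}]$ by its value at $t=\tfrac{\pi}{2}$, namely $\tfrac{2n^2}{4^n(2n-1)^2(2n+1)}\binom{2n}{n}$, and the latter series converges (its terms are $O(n^{-3/2})$ by Stirling). The Weierstrass $M$-test then grants uniform convergence on $[0,\tfrac{\pi}{2}]$ and hence term-by-term integration; the remainder of the argument is routine constant bookkeeping identical to that in Theorem~\ref{theorem5.0.2}.
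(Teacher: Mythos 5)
Your proposal is correct and is exactly the route the paper intends: it proves Theorem~\ref{5.0.3} by repeating the argument of Theorem~\ref{theorem5.0.2} with Lemma~\ref{lemma2.0.2} in place of Lemma~\ref{lemma2.0.1}, i.e., multiplying \eqref{2} by $x^{2r}$, substituting $x=\sin t$, integrating over $[0,\pi/2]$ via Wallis' formula and Lemma~\ref{lemma3.0.2}, and the constants check out. The justification of term-by-term integration is a welcome addition the paper omits.
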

\begin{ex}
	For $r=1,2$, we have that 
	\begin{align}
		&\sum_{n=1}^{\infty}\frac{n^2}{(2n+3)(2n-1)^2(2n+1)}\frac{\binom{2n}{n}}{\binom{2n+2}{n+1}}=\frac{16+5\pi^2}{2048}.\\
		&\sum_{n=1}^{\infty}\frac{n^2}{(2n+5)(2n-1)^2(2n+1)}\frac{\binom{2n}{n}}{\binom{2n+4}{n+2}}=\frac{40+9\pi^2}{18432}.
	\end{align}
\end{ex}	
\begin{Theorem}
	\label{5.0.4}
	For every non-negative integer $r$,
	\begin{align*}
		&\sum_{n=1}^{\infty}\frac{1}{(2n+2r+3)(n+1)(2n+1)(2n+3)}\frac{\binom{2n}{n}}{\binom{2n+2r+2}{n+r+1}}\\
		&=\frac{1}{2^{2r+1}}\left(\frac{3}{8(r+1)}+\frac{4\wp(2r+2)+2\wp(2r)}{8}\right)-\frac{1}{3(2r+3)\binom{2r+2}{r+1}}-\frac{1}{2(2r+1)\binom{2r}{r}}.
	\end{align*}
\end{Theorem}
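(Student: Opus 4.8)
The plan is to reuse the integral-transform mechanism from the proof of Theorem~\ref{theorem5.0.2}, but fed by the generating function of Lemma~\ref{lemma2.0.6}, whose denominator $(n+1)(2n+1)(2n+3)$ is exactly the one appearing in the target series. Concretely, in identity \eqref{6} I would substitute $x=\sin t$, so that $\sqrt{1-x^2}=\cos t$ and $\arcsin x=t$ throughout $[0,\pi/2]$, and then multiply both sides by $\sin^{2r+1}t$. The decisive choice is the \emph{odd} power $\sin^{2r+1}t$: the left-hand side of \eqref{6} carries the \emph{even} exponent $x^{2n+2}$, so multiplying by an odd power raises the total exponent to $2n+2r+3$, which is precisely what is needed for the odd Wallis formula to deposit the central binomial coefficient into the denominator. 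By contrast, in Theorem~\ref{theorem5.0.2} the starting exponent was already odd, so there one multiplied by the even power $x^{2r}$.

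Next I would integrate over $[0,\pi/2]$. On the left, the odd Wallis formula from Lemma~\ref{lemma2.0.7}, namely $\int_0^{\pi/2}\sin^{2m+1}t\,\mathrm{d}t=4^m/\big((2m+1)\binom{2m}{m}\big)$ with $m=n+r+1$, combines with the factor $2^{-(2n+1)}=1/(2\cdot 4^n)$ from \eqref{6} so that the powers $4^n$ cancel and a single prefactor $2^{2r+1}$ remains; thus $\int_0^{\pi/2}(\text{L.H.S.})\,\mathrm{d}t=2^{2r+1}S$, where $S$ denotes the series to be evaluated. It then remains only to integrate the right-hand side of \eqref{6} against $\sin^{2r+1}t$.

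On the right, after inserting $\sqrt{1-x^2}=\cos t$ and $\arcsin x=t$, the integrand splits into four elementary pieces coming from $\tfrac{1}{12}\big(9\cos t\,\sin^{2r+1}t+(6\sin^{2r+2}t+3\sin^{2r}t)\,t-2\sin^{2r+3}t-12\sin^{2r+1}t\big)$. I would evaluate them as follows: the first is elementary, $\int_0^{\pi/2}\cos t\,\sin^{2r+1}t\,\mathrm{d}t=1/(2r+2)$, giving the term $3/\big(8(r+1)\big)$; the two pieces weighted by $t$ integrate to $\wp(2r+2)$ and $\wp(2r)$ by Lemma~\ref{lemma3.0.2}, producing $\big(4\wp(2r+2)+2\wp(2r)\big)/8$; and the last two are again odd Wallis integrals, yielding, after division by the prefactor $2^{2r+1}$, the two correction terms $-1/\big(3(2r+3)\binom{2r+2}{r+1}\big)$ and $-1/\big(2(2r+1)\binom{2r}{r}\big)$. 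Collecting the four contributions and multiplying by $1/2^{2r+1}$ gives the stated closed form.

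I expect the only genuine difficulty to be the bookkeeping of the powers of $2$ and $4$: one must track how the $2^{-(2n+1)}$ in \eqref{6}, the $4^{n+r+1}$ supplied by Wallis, and the overall $\tfrac{1}{12}$ interact, so that (using $4^{r+1}/2^{2r+1}=2$ and $4^{r}/2^{2r+1}=1/2$) the two Wallis-type terms collapse exactly to the advertised rational multiples of $1/\binom{2r+2}{r+1}$ and $1/\binom{2r}{r}$. The interchange of summation and integration is harmless, being justified by uniform convergence on $[0,\pi/2]$ since $|\sin t|\le 1$ and the series in \eqref{6} converges for $|x|\le 1$; beyond that, every ingredient is already in hand through Lemmas~\ref{lemma2.0.6}, \ref{lemma2.0.7} and \ref{lemma3.0.2}.
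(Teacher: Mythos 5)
Your proposal is correct and is exactly the argument the paper intends: it sketches no explicit proof for this theorem, merely saying to repeat the method of Theorem~\ref{theorem5.0.2} with one of Lemmas~\ref{lemma2.0.2}, \ref{lemma2.0.4}, \ref{lemma2.0.6}, and you have correctly identified Lemma~\ref{lemma2.0.6} as the one whose denominator $(n+1)(2n+1)(2n+3)$ matches (the paper's word ``respectively'' would nominally point to Lemma~\ref{lemma2.0.4}, but that lemma lacks the $(2n+1)$ factor and in fact belongs to Theorem~\ref{5.0.5}). All of your constants check out: the prefactor $4^{r+1}/2=2^{2r+1}$ on the left, and $4^{r+1}/(6\cdot 2^{2r+1})=1/3$ and $4^{r}/2^{2r+1}=1/2$ for the two Wallis terms on the right, reproduce the stated closed form exactly.
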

\begin{ex}
	For $r=0,1$, we have that
	\begin{align}
		&\sum_{n=1}^{\infty}\frac{1}{(n+1)(2n+1)(2n+3)^2}\frac{\binom{2n}{n}}{\binom{2n+2}{n+1}}=\frac{9\pi^2-88}{288}.\\
		&\sum_{n=1}^{\infty}\frac{1}{(2n+5)(n+1)(2n+1)(2n+3)}\frac{\binom{2n}{n}}{\binom{2n+4}{n+2}}=\frac{5\pi^2}{1024}-\frac{137}{2880}.
	\end{align}
\end{ex}
\begin{Theorem}
	\label{5.0.5}
	For every non-negative integer $r$,
	\begin{align*}
		&\sum_{n=1}^{\infty}\frac{1}{(n+1)(2n+3)(2n+2r+3)}\frac{\binom{2n}{n}}{\binom{2n+2r+2}{n+r+1}}\\
		&=\frac{1}{2(2r+1)\binom{2r}{r}}-\frac{1}{3(2r+3)\binom{2r+2}{r+1}}-\frac{1}{2^{2r+1}}\left(\frac{1}{4(r+1)}+\frac{\binom{2r}{r}}{2\cdot 4^{r+1}}\left(\frac{\pi^2}{2}+\sum_{k=1}^r\frac{4^k}{k^2\binom{2k}{k}}\right)\right).
	\end{align*}
\end{Theorem}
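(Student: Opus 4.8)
The plan is to follow exactly the template used for Theorem~\ref{theorem5.0.2}, but starting from the generating function in Lemma~\ref{lemma2.0.4}, whose denominator $(n+1)(2n+3)$ matches the summand here (without the extra factor $(2n+1)$ carried by Lemma~\ref{lemma2.0.6}). First I would multiply the identity of Lemma~\ref{lemma2.0.4} through by $x^{2r+1}$, so that the exponent of $x$ on the left becomes $2n+2r+3$; this is precisely the exponent that, after the substitution $x=\sin t$, turns into an \emph{odd} power of $\sin t$ and therefore manufactures the ratio $\binom{2n}{n}/\binom{2n+2r+2}{n+r+1}$ via the second Wallis formula of Lemma~\ref{lemma2.0.7}. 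After this multiplication the right-hand side reads
$$x^{2r+1}-\frac{x^{2r+3}}{6}-\frac{x^{2r+1}\sqrt{1-x^2}}{2}-\frac{x^{2r}\arcsin x}{2}.$$

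Next I would substitute $x=\sin t$, using $\arcsin(\sin t)=t$ and $\sqrt{1-\sin^2 t}=\cos t$ on $[0,\tfrac{\pi}{2}]$, and integrate both sides in $\mathrm{d}t$ over $[0,\tfrac{\pi}{2}]$. On the left I interchange sum and integral (legitimate since the identity holds for $|x|\le 1$ and, as $\binom{2n}{n}\le 4^n=2^{2n}$ while $2^{2n+1}=2\cdot 4^n$, the series is dominated by $\tfrac{1}{2(n+1)(2n+3)}$, giving uniform convergence by the $M$-test), and apply $\int_0^{\pi/2}\sin^{2n+2r+3}t\,\mathrm{d}t=\frac{4^{n+r+1}}{(2n+2r+3)\binom{2n+2r+2}{n+r+1}}$. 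Combined with $2^{2n+1}=2\cdot 4^n$ this collapses the left-hand side to exactly $2^{2r+1}$ times the target series, so the identity reduces to evaluating the four integrals arising from the right-hand side.

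Those four pieces I would handle as follows: $\int_0^{\pi/2}\sin^{2r+1}t\,\mathrm{d}t$ and $\int_0^{\pi/2}\sin^{2r+3}t\,\mathrm{d}t$ are immediate from the odd Wallis formula; $\int_0^{\pi/2}\sin^{2r+1}t\cos t\,\mathrm{d}t=\frac{1}{2r+2}$ is elementary; and the only transcendental contribution, $\int_0^{\pi/2}t\sin^{2r}t\,\mathrm{d}t=\wp(2r)$, is supplied in closed form by Lemma~\ref{lemma3.0.2}. Dividing through by $2^{2r+1}$, substituting $\wp(2r)=\frac{\binom{2r}{r}}{4^{r+1}}\bigl(\frac{\pi^2}{2}+\sum_{k=1}^r\frac{4^k}{k^2\binom{2k}{k}}\bigr)$, and simplifying the rational factors — in particular $\frac{4^{r+1}}{6\cdot 2^{2r+1}}=\frac13$ in the second term and $\frac{4^r}{2^{2r+1}}=\frac12$ in the first — produces precisely the claimed right-hand side, with the last two terms grouping into $-\frac{1}{2^{2r+1}}\bigl(\frac{1}{4(r+1)}+\frac{\binom{2r}{r}}{2\cdot4^{r+1}}(\cdots)\bigr)$.

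I do not expect a genuine obstacle: the architecture is identical to Theorem~\ref{theorem5.0.2} and the sole transcendental integral is already tabulated in Lemma~\ref{lemma3.0.2}. The one place to be careful is the bookkeeping of constants — matching $2^{2n+1}=2\cdot 4^n$ against the $4^{n+r+1}$ from Wallis so that the global prefactor comes out to $\frac{1}{2^{2r+1}}$ rather than $\frac{1}{2^{2r+2}}$ — together with the sign-and-factor simplification of the $\wp(2r)$ term, which must land exactly on the grouped form stated in the theorem.
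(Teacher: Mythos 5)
Your proposal is correct and is essentially the proof the paper intends: the paper only sketches Theorems \ref{5.0.3}--\ref{5.0.5} as ``the same approach'' applied to the lemmas of Section \ref{prelim}, and you correctly identify Lemma \ref{lemma2.0.4} as the right starting point for this one (the paper's ``respectively'' would literally pair it with Lemma \ref{lemma2.0.6}, but the denominator $(n+1)(2n+3)$ forces Lemma \ref{lemma2.0.4}, as you note). All the constants check out — e.g.\ at $r=0$ your formula reproduces the paper's value $\tfrac{92-9\pi^2}{288}$ — so nothing further is needed.
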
	
\begin{ex}
	For $r=0,1$, we have that 
	\begin{align*}
		&\sum_{n=1}^{\infty}\frac{1}{(n+1)(2n+3)^2}\frac{\binom{2n}{n}}{\binom{2n+2}{n+1}}=\frac{92-9\pi^2}{288}.\\
		&\sum_{n=1}^{\infty}\frac{1}{(n+1)(2n+3)(2n+5)}\frac{\binom{2n}{n}}{\binom{2n+4}{n+2}}=\frac{59}{1440}-\frac{\pi^2}{256}.
	\end{align*}
\end{ex}
\vspace{5mm}
\begin{remark}
	Observe that:
	\begin{align*}
		\binom{2n+2r}{n+r}&=\frac{(2n+2r)(2n+2r-1)\cdots(2n+r)\cdots(2n+2)(2n+1)(2n)!}{((n+r)(n+r-1)\cdots(n+1)(n!))^2}.\\
		&=\frac{2^r(2n+2r-1)(2n+2r-3)\cdots(2n+3)(2n+1)}{(n+r)(n+r-1)\cdots(n+2)(n+1)}\cdot\frac{(2n)!}{(n!)^2}.
	\end{align*}
	Thus, 
	\begin{equation}
		\label{remark}
		\binom{2n+2r}{n+r}=\frac{4^r(n+1/2)_r}{(n+1)_r}\binom{2n}{n}.
	\end{equation}
	in terms of the Pochhammer symbol $(x)_n=x(x+1)\cdots(x+n-1)$; $(x)_0=1$.
\end{remark}
Since Theorem \ref{theorem5.0.2}, Theorem \ref{5.0.3}, Theorem \ref{5.0.4} and Theorem \ref{5.0.5}  are of the form \\$\displaystyle{\sum_{n=0}^{\infty}\alpha(n)\frac{\binom{2n}{n}}{\binom{2n+r}{n+r}}}=\lambda(r)$, they can be rewritten using \eqref{remark} for simplifications:
\vspace{-3mm}
$$\sum_{n=0}^{\infty}\alpha(n)\frac{(n+1)_r}{(n+1/2)_r}=\mu(r).$$
Illustrating this, Theorem \ref{theorem5.0.2} has the following simplified form:
\begin{align}
	\begin{split}
		&\sum_{n=1}^{\infty}\frac{n}{(2n+2r+3)(2n-1)^2(2n+1)(2n+3)}\frac{(n+1)_{r+1}}{(n+1/2)_{r+1}}\\
		&=\frac{8\wp(2r+4)-8\wp(2r+2)+3\wp(2r)}{128}+\frac{6}{128(4+2r)}-\frac{3}{128(2+2r)}.
	\end{split}
\end{align}
In particular,
\begin{align}
	&\sum_{n=1}^{\infty}\frac{n(n+1)}{(n+1/2)(2n+1)(2n+3)^2(2n-1)^2}=\frac{\pi^2}{512}.\\
	&\sum_{n=1}^{\infty}\frac{n(n+1)(n+2)}{(n+1/2)(n+3/2)(2n+1)(2n+3)(2n+5)(2n-1)^2}=\frac{9\pi^2+64}{9216}.
\end{align}
\section{Concluding Remarks}
On the quest of finding closed forms for series of the form $\sum_{n=1}^{\infty}a(n)\binom{4n+2r}{2n+r}\binom{2n}{n}^{-1}$ where $r\geq 3$ and $a(n)$ is a sequence of real numbers, we were able to show using Lemma~\ref{lemma2.0.3} that,
\begin{align}\label{bounty}
	\begin{split}
		&\sum_{n=1}^{\infty}\frac{1}{4^nn(n+3/2)}\frac{\binom{4n+2r}{2n+r}}{\binom{2n}{n}}\\ &=\frac{4}{9}\binom{2r}{r}+\frac{128}{3}\binom{2r-4}{r-2}-\frac{4^{r+1}}{3\pi}\mathscr{F}(r-1)-\frac{2\cdot 4^{r+1}}{3\pi}\mathscr{F}(r-3).
	\end{split}
\end{align}
where,
$$\mathscr{F}(r):=\int_0^1t^{r-1/2}\sqrt{1+t}\arcsin t\,\mathrm{d}t\qquad r\geq 3.$$
In an earlier version of this paper, the authors did not succeed in evaluating $\mathscr{F}(0)$. It was then posted on Mathematics Stack Exchange by the second author and the MSE user, David. H \cite{mse}  brilliantly provided the closed form evaluation. We present his approach in the following Lemma.
\begin{Lemma}\label{baboy}
	If 
	$$\mathscr{F}(r):=\int_0^1 t^{r-1/2}\sqrt{1+t}\arcsin t\,\mathrm{d}t, \qquad \qquad \text{for} \ \  r\in \mathbb{N}\cup \{0\}.$$
	Then, 
	$$\mathscr{F}(0)=\frac{\pi}{2}\left(\sqrt{2}-1+\ln \left(\sqrt{2}+1\right)-\ln 2\right).$$
\end{Lemma}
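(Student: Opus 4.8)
The plan is to prove the evaluation by integration by parts, which isolates a single hard integral, and then to compute that integral by a differentiation-under-the-integral-sign argument with a moving endpoint. First I would integrate by parts in $\mathscr{F}(0)=\int_0^1\sqrt{(1+t)/t}\,\arcsin t\,\mathrm{d}t$, taking $u=\arcsin t$ and $\mathrm{d}v=\sqrt{(1+t)/t}\,\mathrm{d}t$. A direct check (differentiate and combine over the common denominator $\sqrt{t(1+t)}$) shows that $v=\sqrt{t(1+t)}+\sinh^{-1}\sqrt t$ is an antiderivative of $\sqrt{(1+t)/t}$. Since $\arcsin 1=\tfrac{\pi}{2}$ and $v(1)=\sqrt2+\ln(1+\sqrt2)$ while the contribution at $t=0$ vanishes, this gives
\[\mathscr{F}(0)=\frac{\pi}{2}\bigl(\sqrt2+\ln(1+\sqrt2)\bigr)-\int_0^1\frac{\sqrt{t(1+t)}}{\sqrt{1-t^2}}\,\mathrm{d}t-\int_0^1\frac{\sinh^{-1}\sqrt t}{\sqrt{1-t^2}}\,\mathrm{d}t.\]

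Next, the first integral simplifies after cancelling $\sqrt{1+t}$, namely $\sqrt{t(1+t)}/\sqrt{1-t^2}=\sqrt{t/(1-t)}$, and the substitution $t=\sin^2\theta$ yields $\int_0^1\sqrt{t/(1-t)}\,\mathrm{d}t=2\int_0^{\pi/2}\sin^2\theta\,\mathrm{d}\theta=\tfrac{\pi}{2}$. The whole problem therefore reduces to showing that $J:=\int_0^1\frac{\sinh^{-1}\sqrt t}{\sqrt{1-t^2}}\,\mathrm{d}t=\frac{\pi}{2}\ln 2$, and I expect this to be the main obstacle, since naive series or parameter methods applied directly to $J$ lead to quarter-integer Beta values or elliptic integrals.

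To evaluate $J$ I would first put $t=\sin\phi$ to get $J=\int_0^{\pi/2}\sinh^{-1}\sqrt{\sin\phi}\,\mathrm{d}\phi$, write $\sinh^{-1}\sqrt{\sin\phi}=\ln\!\bigl(\sqrt{\sin\phi}+\sqrt{1+\sin\phi}\bigr)$, and then apply the half-angle substitution $\psi=\tfrac{\pi}{4}-\tfrac{\phi}{2}$. Using $1+\sin\phi=2\cos^2\psi$ and $\sin\phi=\cos 2\psi=(\sqrt2\cos\psi)^2-1$, the integrand becomes $\ln\!\bigl(w+\sqrt{w^2-1}\bigr)=\cosh^{-1}w$ with $w=\sqrt2\cos\psi\ge 1$ on $[0,\tfrac{\pi}{4}]$, so $J=2\int_0^{\pi/4}\cosh^{-1}(\sqrt2\cos\psi)\,\mathrm{d}\psi$. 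Now I introduce the parameter $a\ge 1$ and set $N(a)=\int_0^{\arccos(1/a)}\cosh^{-1}(a\cos\psi)\,\mathrm{d}\psi$, so that $N(1)=0$ and $J=2N(\sqrt2)$.

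The crucial point is that differentiating $N$ is clean despite the moving upper limit: the endpoint contribution vanishes because $\cosh^{-1}(a\cdot\tfrac1a)=\cosh^{-1}1=0$, leaving $N'(a)=\int_0^{\arccos(1/a)}\frac{\cos\psi}{\sqrt{a^2\cos^2\psi-1}}\,\mathrm{d}\psi$. The substitution $s=\sin\psi$ converts the antiderivative to $\frac1a\arcsin\!\bigl(as/\sqrt{a^2-1}\bigr)$, and at the upper limit $s=\sqrt{a^2-1}/a$ the argument equals $1$, so $N'(a)=\frac{\pi}{2a}$. Integrating from $1$ gives $N(a)=\frac{\pi}{2}\ln a$, hence $J=2N(\sqrt2)=\frac{\pi}{2}\ln 2$. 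Substituting this back into the expression from the first step produces $\mathscr{F}(0)=\frac{\pi}{2}\bigl(\sqrt2-1+\ln(\sqrt2+1)-\ln 2\bigr)$, as claimed.
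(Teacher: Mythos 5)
Your proposal is correct, and your reduction step coincides with the paper's: both arguments integrate by parts (you do it in one stroke using the antiderivative $v=\sqrt{t(1+t)}+\sinh^{-1}\sqrt{t}$ of $\sqrt{(1+t)/t}$, the paper in two stages via the derivative of $\sqrt{t(1+t)}\arcsin t$), and both land on the same hard core, $J=\int_0^1\sinh^{-1}(\sqrt{t})/\sqrt{1-t^2}\,\mathrm{d}t$. Where you genuinely diverge is in evaluating $J$. The paper inserts the integral representation $\sinh^{-1}(z)=\int_0^1 z/\sqrt{1+z^2s^2}\,\mathrm{d}s$, substitutes $u=\sqrt{(1-t)/(1+t)}$ to get a double integral, evaluates the inner integral with $\int_0^1 2\,\mathrm{d}u/((1+r^2u^2)\sqrt{1-u^2})=\pi/\sqrt{1+r^2}$, and then finishes with the Euler substitution $\sqrt{1+x^2}=x+y$ and partial fractions. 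You instead use the half-angle identity to rewrite $J$ as $2\int_0^{\pi/4}\cosh^{-1}(\sqrt{2}\cos\psi)\,\mathrm{d}\psi$ and differentiate the one-parameter family $N(a)=\int_0^{\arccos(1/a)}\cosh^{-1}(a\cos\psi)\,\mathrm{d}\psi$ in $a$, obtaining $N'(a)=\pi/(2a)$ because the moving-endpoint contribution vanishes ($\cosh^{-1}(1)=0$) and the remaining integral is elementary. Your route is shorter and avoids both the interchange of a double integral and the Euler substitution; its one delicate point is the Leibniz differentiation itself, since $\partial_a\cosh^{-1}(a\cos\psi)$ blows up like $(\psi_0-\psi)^{-1/2}$ at the moving endpoint --- this is integrable and the interchange is legitimate, but it deserves a sentence of justification (e.g., rescale $\psi=\lambda\arccos(1/a)$ to fix the domain, or bound the difference quotients directly), whereas every step of the paper's longer computation is elementary. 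Both correctly yield $J=\frac{\pi}{2}\ln 2$ and hence the stated value of $\mathscr{F}(0)$.
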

\begin{proof}
	consider the following derivative:
	\begin{align*}
		\frac{\mathrm{d}}{\mathrm{d}t}\left[\sqrt{t(1+t)}\arcsin t\right]=\frac{\sqrt{t(1+t)}}{\sqrt{1-t^2}}+\frac{\sqrt{t}}{2\sqrt{1+t}}\arcsin t+\frac{\sqrt{1+t}}{2\sqrt{t}}\arcsin t.
	\end{align*}
	Integrating both sides and rearranging, we obtain the following integration by parts formula:
	$$\int_0^1\frac{\sqrt{1+t}}{\sqrt{t}}\arcsin t\, \mathrm{d}t=\pi\sqrt{2}-2\int_0^1\frac{\sqrt{t}}{\sqrt{1-t}}\,\mathrm{d}t-\int_0^1\frac{\sqrt{t}}{\sqrt{1+t}}\arcsin t\,\mathrm{d}t.$$
	Since, 
	\begin{align*}
		2\int_0^1\frac{\sqrt{t}}{\sqrt{1-t}}\,\mathrm{d}t=\pi \qquad \text{and} \qquad \int_0^1\frac{\sqrt{1+t}}{\sqrt{t}}\arcsin t\,\mathrm{d}t=\pi(\sqrt{2}-1)-\int_0^1\frac{\sqrt{t}}{\sqrt{1+t}}\arcsin t\,\mathrm{d}t.
	\end{align*}
	Recall that the inverse hyperbolic sine may be given by 
	$$\sinh^{-1}(z):=\int_0^z\frac{1}{\sqrt{1+y^2}}\,\mathrm{d}y=\int_0^1\frac{z}{\sqrt{1+z^2t^2}}\,\mathrm{d}t=\ln\left(z+\sqrt{1+z^2}\right); \qquad z\in \mathbb{R}.$$
	Observe,
	\begin{align*}
		\mathscr{I}=\int_0^1\frac{\sqrt{1+t}}{\sqrt{t}}\arcsin t\,\mathrm{d}t=\frac{\pi}{2}\left(\sqrt{2}-1\right)+\arcsin(1)\sinh^{-1}(1)-\int_0^1\frac{\sinh^{-1}(\sqrt{t})}{\sqrt{1-t^2}}\,\mathrm{d}t.
	\end{align*}
	Making the substitution $u=\sqrt{\frac{1-t}{1+t}}$ and integrating by parts, we find
	\begin{align*}
		\mathscr{I}&=\frac{\pi}{2}\left(\sqrt{2}-1\right)+\frac{\pi}{2}\ln\left(1+\sqrt{2}\right)-\int_0^1\int_0^1\frac{2\sqrt{2}u^2}{(1+x^2u^2)(1+u^2)\sqrt{1-u^2}}\,\mathrm{d}u\mathrm{d}x.\\
		&=\frac{\pi}{2}\left(\sqrt{2}-1\right)+\frac{\pi}{2}\ln\left(1+\sqrt{2}\right)-\pi\int_0^1 \frac{1}{1-x^2}\left[\frac{\sqrt{2}}{\sqrt{1+x^2}}-1\right]\,\mathrm{d}x.
	\end{align*}
	where we've made use of the following elementary integration formula to evaluate the integrals over $u$:
	$$\int_0^1\frac{2}{(1+r^2u^2)\sqrt{1-u^2}}\,\mathrm{d}u=\frac{\pi}{\sqrt{1+r^2}}; \qquad r\in \mathbb{R}.$$
	Observing that 
	$$1-x^2=\left[\sqrt{2}-\sqrt{1+x^2}\right]\left[\sqrt{2}+\sqrt{1+x^2}\right],$$
	we can use the Euler substitution $\sqrt{1+x^2}=x+y$ to rationalize the remaining integral for $\mathscr{I}$ and obtain the following result:
	\begin{align*}
		\mathscr{I}&=\frac{\pi}{2}\left(\sqrt{2}-1\right)+\frac{\pi}{2}\ln\left(1+\sqrt{2}\right)-\pi\int_0^1\frac{1}{\left[\sqrt{2}+\sqrt{1+x^2}\right]\sqrt{1+x^2}}\,\mathrm{d}x.\\
		&=\frac{\pi}{2}\left(\sqrt{2}-1\right)+\frac{\pi}{2}\ln\left(1+\sqrt{2}\right)-\pi\int_{\sqrt{2}-1}^1\frac{1}{\sqrt{2}+\left(\frac{1+y^2}{2y}\right)}\,\frac{\mathrm{d}y}{y}.\\
		&=\frac{\pi}{2}\left(\sqrt{2}-1\right)+\frac{\pi}{2}\ln\left(1+\sqrt{2}\right)-\pi\int_{\sqrt{2}-1}^1\frac{2}{y^2+2\sqrt{2}y+1}\,\mathrm{d}y.\\
		&=\frac{\pi}{2}\left(\sqrt{2}-1\right)+\frac{\pi}{2}\ln\left(1+\sqrt{2}\right)-\pi\int_{\sqrt{2}-1}^1\frac{2}{(y+\sqrt{2}-1)(y+\sqrt{2}+1)}\,\mathrm{d}y.\\
		&=\frac{\pi}{2}\left(\sqrt{2}-1\right)+\frac{\pi}{2}\ln\left(1+\sqrt{2}\right)-\pi\int_{\sqrt{2}-1}^1\frac{\mathrm{d}}{\mathrm{d}y}\ln\left(\frac{y+\sqrt{2}-1}{y+\sqrt{2}+1}\right)\,\mathrm{d}y.\\
		&=\frac{\pi}{2}\left(\sqrt{2}-1\right)+\frac{\pi}{2}\ln\left(1+\sqrt{2}\right)-\pi\left[\ln(\sqrt{2}-1)-\ln\left(\frac{\sqrt{2}-1}{\sqrt{2}}\right)\right].
	\end{align*}
	On simplification the result follows immediately.
\end{proof}
The above leads to the following proposition:
\begin{prop}
	For $r\in \mathbb{N}\cup \{0\}$.
	\begin{equation}
		\label{bountyqq}
		\mathscr{F}(r)=\frac{(-1)^r}{4^r}\frac{\binom{2r}{r}}{r+1}\left(\mathscr{F}(0)+\frac{1}{2}\sum_{k=1}^r\frac{(-1)^k4^k}{\binom{2k}{k}}\cdot\omega_k\right)
	\end{equation}
	where $\omega_k=2\sqrt{2}\pi-2B(k+3/2,1/2)-2B(k+1/2,1/2)$
\end{prop}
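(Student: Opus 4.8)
The plan is to mimic exactly the argument used for $\varphi(2k)$ in the proof of Theorem~\ref{theorem2.1.3}: derive a first-order recurrence connecting $\mathscr{F}(r)$ to $\mathscr{F}(r-1)$ and then feed it into Lemma~\ref{lemma2.0.8}. The appearance of the prefactor $\frac{(-1)^r}{4^r}\frac{\binom{2r}{r}}{r+1}$ in \eqref{bountyqq} is the tell-tale sign that the governing coefficients must be $a_k=k+1$ and $b_k=-\left(k-\tfrac12\right)$, the same ones produced there, so the whole task reduces to isolating the correct inhomogeneous term $r_k$, which should turn out to be $\tfrac12\omega_k$.

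First I would introduce the auxiliary integral $J(r):=\int_0^1 \frac{t^{\,r-1/2}}{\sqrt{1+t}}\arcsin t\,\mathrm{d}t$. Writing $\sqrt{1+t}=(1+t)/\sqrt{1+t}$ inside the definition of $\mathscr{F}(r)$ immediately gives the splitting
\[
\mathscr{F}(r)=J(r)+J(r+1).
\]
Separately, integrating $\mathscr{F}(r)$ by parts with $\mathrm{d}v=t^{\,r-1/2}\,\mathrm{d}t$ and $u=\sqrt{1+t}\arcsin t$, and simplifying $u'$ via $\frac{\sqrt{1+t}}{\sqrt{1-t^2}}=\frac{1}{\sqrt{1-t}}$, produces the boundary term $\frac{\sqrt2\,\pi}{2r+1}$ together with the two integrals $J(r+1)$ and $\int_0^1\frac{t^{\,r+1/2}}{\sqrt{1-t}}\,\mathrm{d}t=B\!\left(r+\tfrac32,\tfrac12\right)$. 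This yields the relation
\[
(2r+1)\,\mathscr{F}(r)=\sqrt2\,\pi-J(r+1)-2B\!\left(r+\tfrac32,\tfrac12\right).
\]

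Next I would eliminate the auxiliary quantity $J$. Solving the last display for $J(r+1)$ and substituting it into $\mathscr{F}(r)=J(r)+J(r+1)$ expresses $J(r)$ purely through $\mathscr{F}(r)$; comparing this with the expression for $J(r)$ obtained by shifting $r\mapsto r-1$ in the displayed relation makes the $J$-terms cancel and leaves the clean recurrence
\[
(r+1)\,\mathscr{F}(r)+\left(r-\tfrac12\right)\mathscr{F}(r-1)=\tfrac12\,\omega_r,\qquad r\ge 1,
\]
with $\omega_r=2\sqrt2\,\pi-2B\!\left(r+\tfrac32,\tfrac12\right)-2B\!\left(r+\tfrac12,\tfrac12\right)$ exactly as defined. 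This cancellation, together with the bookkeeping in the single integration by parts, is the only genuinely delicate step; everything afterward is mechanical.

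Finally, the recurrence has the form required by Lemma~\ref{lemma2.0.8} with $a_k=k+1$, $b_k=-\left(k-\tfrac12\right)$ and $r_k=\tfrac12\omega_k$, the initial value $z_0=\mathscr{F}(0)$ being supplied by Lemma~\ref{baboy}. A short factorial computation gives $\frac{b_1\cdots b_r}{a_1\cdots a_r}=\frac{(-1)^r}{4^r}\frac{\binom{2r}{r}}{r+1}$ and $\frac{a_1\cdots a_{k-1}}{b_1\cdots b_k}=(-1)^k\frac{4^k}{\binom{2k}{k}}$, so that $\sum_{k=1}^r \frac{a_1\cdots a_{k-1}}{b_1\cdots b_k}\,r_k=\tfrac12\sum_{k=1}^r\frac{(-1)^k4^k}{\binom{2k}{k}}\omega_k$, and plugging these into the formula of Lemma~\ref{lemma2.0.8} yields precisely \eqref{bountyqq}.
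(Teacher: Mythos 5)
Your proposal is correct and follows essentially the same route as the paper: the same integration by parts giving $(2r+1)\mathscr{F}(r)=\sqrt{2}\pi-2B\!\left(r+\tfrac32,\tfrac12\right)-\int_0^1 t^{r+1/2}(1+t)^{-1/2}\arcsin t\,\mathrm{d}t$, the same recurrence $(2r+2)\mathscr{F}(r)=-(2r-1)\mathscr{F}(r-1)+\omega_r$, and the same application of Lemma~\ref{lemma2.0.8} with $a_k=k+1$, $b_k=-(k-\tfrac12)$, $r_k=\tfrac12\omega_k$. The only cosmetic difference is that you name the auxiliary integral $J(r)$ and phrase the key step as eliminating $J$ rather than adding the two shifted relations and recognizing $J(r)+J(r+1)=\mathscr{F}(r)$, which is the identical algebra.
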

\begin{proof}
	Using Integration by part, we have:
	$$\mathscr{F}(r)=\frac{\sqrt{2}\pi}{2r+1}-\frac{1}{2r+1}\int_0^1\frac{t^{r+1/2}\arcsin t}{\sqrt{1+t}}\,\mathrm{d}t+\frac{2}{2r+1}\underbrace{\int_0^1\frac{t^{r+1/2}}{\sqrt{1-t}}\,\mathrm{d}t}_{B(r+3/2,1/2)}.$$
	where $B(m,n)$ is the Beta function. Thus,
	\begin{equation}
		\label{crazy1}
		(2r+1)\mathscr{F}(r)=\pi\sqrt{2}-2B(r+3/2,1/2)-\int_0^1\frac{t^{r+1/2}\arcsin t}{\sqrt{1+t}}\,\mathrm{d}t.
	\end{equation}
	Put $r\mapsto r-1$ in \eqref{crazy1}, we also have
	\begin{equation}
		\label{crazy2}
		(2r-1)\mathscr{F}(r-1)=\pi\sqrt{2}-2B(r+1/2,1/2)-\int_0^1\frac{t^{r-1/2}\arcsin t}{\sqrt{1+t}}\,\mathrm{d}t.	
	\end{equation}
	Adding both equations \eqref{crazy1} and \eqref{crazy2} implies,
	$$(2r+2)\mathscr{F}(r)=-(2r-1)\mathscr{F}(r-1)+2\sqrt{2}\pi-2B(r+3/2,1/2)-2B(r+1/2,1/2).$$
	using the key lemma \ref{lemma2.0.8} for recursive sequences, we get:
	\begin{equation*}\label{saver}
		\mathscr{F}(r)=\frac{(-1)^r}{4^r}\frac{\binom{2r}{r}}{r+1}\left(\mathscr{F}(0)+\frac{1}{2}\sum_{k=1}^r\frac{(-1)^k4^k}{\binom{2k}{k}}\cdot\omega_k\right)
	\end{equation*}
\end{proof}
Now, with the closed form evaluation of $\mathscr{F}(r)$, we formulate the following theorem:
\begin{Theorem} For $r\geq 3$ an integer, we have
	\begin{align*}
		\begin{split}
			&\sum_{n=1}^{\infty}\frac{1}{4^nn(n+3/2)}\frac{\binom{4n+2r}{2n+r}}{\binom{2n}{n}}\\ &=\frac{4}{9}\binom{2r}{r}+\frac{128}{3}\binom{2r-4}{r-2}-\frac{4^{r+1}}{3\pi}\mathscr{F}(r-1)-\frac{2\cdot 4^{r+1}}{3\pi}\mathscr{F}(r-3).
		\end{split}
	\end{align*}
	where $$\mathscr{F}(r)=\frac{(-1)^r}{4^r}\frac{\binom{2r}{r}}{r+1}\left(\mathscr{F}(0)+\frac{1}{2}\sum_{k=1}^r\frac{(-1)^k4^k}{\binom{2k}{k}}\cdot\omega_k\right)$$
\end{Theorem}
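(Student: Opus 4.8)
The plan is to recognize that the stated theorem is the synthesis of two ingredients already assembled in this section: the series evaluation \eqref{bounty}, which expresses the sum in terms of $\mathscr{F}(r-1)$ and $\mathscr{F}(r-3)$, and the preceding Proposition, which gives the closed form of $\mathscr{F}(r)$. The only substantive work is to establish \eqref{bounty}; once that is in hand the theorem follows by direct substitution. I would begin from the summand and replace the central binomial coefficient $\binom{4n+2r}{2n+r}$ by its Wallis representation (Lemma \ref{lemma2.0.7}), namely
$$\frac{\binom{4n+2r}{2n+r}}{4^n}=\frac{2\cdot 4^r}{\pi}\int_0^{\pi/2}(4\sin^4\theta)^n\sin^{2r}\theta\,\mathrm{d}\theta.$$
Inserting this into the series and interchanging summation and integration (legitimate since the inner series converges for all $\theta\in[0,\pi/2]$, including the endpoint where the argument equals $4$) reduces everything to one integral,
$$\sum_{n=1}^\infty \frac{1}{4^n n(n+3/2)}\frac{\binom{4n+2r}{2n+r}}{\binom{2n}{n}}=\frac{2\cdot 4^r}{\pi}\int_0^{\pi/2}\sin^{2r}\theta\left(\sum_{n=1}^\infty \frac{(4\sin^4\theta)^n}{n(n+3/2)\binom{2n}{n}}\right)\mathrm{d}\theta.$$

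Next I would evaluate the inner sum by Lemma \ref{lemma2.0.3} with $x=4\sin^4\theta\in[0,4)$. The decisive simplification is the trigonometric identity
$$\arctan\sqrt{\frac{4\sin^4\theta}{4-4\sin^4\theta}}=\arcsin(\sin^2\theta),$$
together with $\sqrt{4-4\sin^4\theta}=2\cos\theta\sqrt{1+\sin^2\theta}$, which rewrites the closed form of Lemma \ref{lemma2.0.3} as
$$\frac{4}{9}+\frac{24}{9\sin^4\theta}-\frac{(4\sin^4\theta+8)\cos\theta\sqrt{1+\sin^2\theta}}{3\sin^6\theta}\arcsin(\sin^2\theta).$$
The two algebraic terms, multiplied by $\sin^{2r}\theta$ and integrated by the Wallis formulas, produce exactly $\frac{4}{9}\binom{2r}{r}+\frac{128}{3}\binom{2r-4}{r-2}$. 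For the $\arcsin$-term I would substitute $t=\sin^2\theta$, so that $\cos\theta\,\mathrm{d}\theta=\mathrm{d}t/(2\sqrt{t})$, $\sqrt{1+\sin^2\theta}=\sqrt{1+t}$, and $\arcsin(\sin^2\theta)=\arcsin t$; after splitting $(4t^2+8)t^{\,r-7/2}=4t^{\,r-3/2}+8t^{\,r-7/2}$ and recalling $\mathscr{F}(m)=\int_0^1 t^{\,m-1/2}\sqrt{1+t}\arcsin t\,\mathrm{d}t$, this term becomes precisely $-\frac{4^{r+1}}{3\pi}\mathscr{F}(r-1)-\frac{2\cdot 4^{r+1}}{3\pi}\mathscr{F}(r-3)$. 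This is \eqref{bounty}.

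Finally, I would invoke the Proposition to replace $\mathscr{F}(r-1)$ and $\mathscr{F}(r-3)$ by their closed forms (which rest on Lemma \ref{baboy} for $\mathscr{F}(0)$ and on the order-one recursion solved via Lemma \ref{lemma2.0.8}), yielding the displayed theorem verbatim.

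The hard part will be twofold. First, the constant bookkeeping: verifying the arctangent-to-arcsine identity and then carrying the factors of $2$, $4^r$, and the various powers of $\sin\theta$ through both the Wallis evaluations and the substitution $t=\sin^2\theta$ so that the coefficients $\frac{128}{3}$, $4^{r+1}$, and $2\cdot 4^{r+1}$ emerge exactly. Second, the hypothesis $r\geq 3$ must be justified with care: although the original series converges for every $r$, the restriction is what guarantees that both indices $r-1$ and $r-3$ are non-negative, so that the Proposition's closed form for $\mathscr{F}$ — obtained by descending the recursion to $\mathscr{F}(0)$ — is actually applicable to each of the two $\mathscr{F}$ contributions.
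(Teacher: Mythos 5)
Your proposal is correct and follows the same route the paper takes: establish \eqref{bounty} by inserting the Wallis representation of $\binom{4n+2r}{2n+r}$, interchanging sum and integral, applying Lemma~\ref{lemma2.0.3} at $x=4\sin^4\theta$ with the $\arctan$-to-$\arcsin$ simplification and the substitution $t=\sin^2\theta$, and then substitute the Proposition's closed form for $\mathscr{F}(r-1)$ and $\mathscr{F}(r-3)$. In fact you supply more detail than the paper, which only cites Lemma~\ref{lemma2.0.3} for \eqref{bounty}, and your coefficient bookkeeping ($\frac{4}{9}\binom{2r}{r}$, $\frac{128}{3}\binom{2r-4}{r-2}$, $4^{r+1}$, $2\cdot 4^{r+1}$) and your justification of the hypothesis $r\geq 3$ both check out.
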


\begin{ex} For $r=3,4,5$ in  \eqref{bounty}, we have
	\begin{align}
		&\sum_{n=1}^{\infty}\frac{1}{4^nn(n+3/2)}\frac{\binom{4n+6}{2n+3}}{\binom{2n}{n}}=\frac{8}{9}\left(209-110\sqrt{2}+102\ln(2)-102\ln(1+\sqrt{2})\right).\\
		&\sum_{n=1}^{\infty}\frac{1}{4^nn(n+3/2)}\frac{\binom{4n+8}{2n+4}}{\binom{2n}{n}}=\frac{2}{9}\left(2407-1396\sqrt{2}-444\ln(2)+444\ln(1+\sqrt{2})\right).\\
		&\sum_{n=1}^{\infty}\frac{1}{4^nn(n+3/2)}\frac{\binom{4n+10}{2n+5}}{\binom{2n}{n}}=\frac{4}{15}\left(4537-1948\sqrt{2}+780\ln(2)-780\ln(1+\sqrt{2})\right).
	\end{align}
\end{ex}
\par This last result sparked our curiosity and prompted us to investigate the generalized series that will be discussed in a forthcoming paper. Another direction for future research is to examine infinite series covered in this work under the hypergeometric functions framework and tools.

\bigskip 
\hrule
\bigskip

\noindent{\it 2020 Mathematics Subject Classification:} 11B65, 05A10, 33B10, 40A05.

\noindent \emph{Keywords:} Infinite Series, Central Binomial Coefficients, Generating Functions, Pochhammer Symbol.

\bigskip
\hrule
\bigskip

\noindent (Concerned with sequence \seqnum{A000984}.)

	\bigskip
\hrule
\bigskip

\end{document}